\numberwithin{equation}{section}
\def\N{\mathbb{N}}
\def\R{\mathbb{R}}
\renewcommand\d{\partial}
\def\k{\kappa}
\def\epsilon{\varepsilon}
\def\e{\varepsilon}
\newcommand\br{\begin{rem}}
\newcommand\er{\end{rem}}
\newcommand\bp{\begin{pmatrix}}
\newcommand\ep{\end{pmatrix}}
\newcommand\be{\begin{equation}}
\newcommand\ee{\end{equation}}
\newcommand\ba{\begin{equation}\begin{aligned}}
\newcommand\ea{\end{aligned}\end{equation}}
\newtheorem{theorem}{Theorem}[section]
\newtheorem{proposition}[theorem]{Proposition}
\newtheorem{remark}[theorem]{Remark}
\title{On the speed rate of convergence of solutions to conservation laws with nonlinear diffusions. } 
\begin{document}

\maketitle

\begin{center}
RAFFAELE FOLINO\footnote{Departamento de Matem\'aticas y Mec\'anica, IIMAS - UNAM, Mexico City, E-mail address: \texttt{folino@mym.iimas.unam.mx}}, 
MARTA STRANI\footnote{Universit\`a Ca' Foscari, Dipartimento di Scienze Molecolari e Nanosistemi, Venezia Mestre (Italy). E-mail address: \texttt{marta.strani@unive.it}}\\

\end{center}
\vskip1cm

\begin{abstract}
In this paper we analyze the long-time behavior of solutions to conservation laws with nonlinear diffusion terms of different types:
saturating dissipation (monotone and non monotone) and singular nonlinear diffusions are considered.
In particular, the cases of mean curvature-type diffusions both in the Euclidean space and in Lorentz-Minkowski space enter in our framework.
After dealing with existence and stability of monotone steady states in a bounded interval of the real line with Dirichlet boundary conditions, 
we discuss the speed rate of convergence to the asymptotic limit as $t\to+\infty$.
Finally, in the particular case of a Burgers flux function, we show that the solutions exhibit the phenomenon of metastability.
\end{abstract}

\begin{quote}\footnotesize\baselineskip 14pt 
{\bf Key words.} 
Saturating diffusion, Minkowski curvature operator, steady states, stability, metastability.
 \vskip.15cm
\end{quote}

\begin{quote}\footnotesize\baselineskip 14pt 
{\bf AMS subject classification.} 
35K20, 35B35, 35B36, 35B40, 35P15.
 \vskip.15cm
\end{quote}

\pagestyle{myheadings}
\thispagestyle{plain}
\markboth{R. FOLINO, M. STRANI}{CONSERVATION LAWS WITH NONLINEAR DIFFUSIONS}

\section{Introduction}
In this paper we are interested in studying the long time dynamical properties of the solutions to a scalar conservation law with a nonlinear diffusion; 
precisely, given $\ell>0$ and $I=(-\ell,\ell)$, we consider the initial boundary value problem
\begin{equation}\label{NonLBurg}
 	\left\{\begin{aligned}
		 u_t+ f(u)_x & =Q(\e u_x)_x,
		&\qquad &x\in I, \,t > 0,\\
 		u(\pm\ell,t)&=u_\pm, &\qquad &t\geq0,\\
		u(x,0)		& =u_0(x), &\qquad &x\in I,
  	\end{aligned}\right.
\end{equation}
where the convection term $f \in C^2(\R)$ and $\e >0$; the main example we have in mind is the Burgers flux $f(u)=u^2/2$.
However, the results of the first part of the paper (Sections \ref{sec2}-\ref{sec3}) hold for a generic function $f \in C^2(\R)$
and it is only in the second part that we focus on a Burgers-type convection. 
Similarly, in the first sections we think of $\e$ as a real parameter, and it is only in Sections \ref{sec4}-\ref{sec5} that we will  consider $\e\ll1$.

As concerning the dissipation flux function $Q$, we will consider three different types of function:
\begin{itemize}
\item  $Q\in C^2(\R)$ monotone increasing and bounded: precisely, we assume
\begin{equation}\label{eq:Qmono}
	Q(0)=0, \qquad \lim_{s\to\pm\infty} Q(s)=\pm Q_\infty, \qquad Q'(s) >0, \quad \forall\,s\in\R, \qquad \lim_{|s|\to\infty}Q'(s)=0.
\end{equation}
The model example (already studied in \cite{FGS}) we have in mind is
\begin{equation}\label{MC}
	Q(s)=\frac{s}{\sqrt{1+s^2}}.
\end{equation}

\item  $Q\in C^2(\R)$ non monotone and bounded; we consider the model example 
\begin{equation}\label{eq:Qnonmonotone}
	Q(s)=\frac{s}{1+s^2}.
\end{equation}

\item $Q\in C^2(-s^*,s^*)$ monotone increasing, unbounded and such that $\displaystyle\lim_{s \to \pm s^*} Q(s)= \pm \infty$ for some $s^*>0$; the model example is 
\begin{equation}\label{eq:Qnonbound}
	Q(s)=\frac{s}{\sqrt{1-s^2}}.
\end{equation}
\end{itemize}
The goal of this paper is twofold: on the one side, we generalize the results of \cite{FGS} to the general setting \eqref{eq:Qmono}; 
moreover, the model we consider here is slightly different with respect to the one proposed in \cite{FGS}, 
so that we are able to overcome the smallness condition on the boundary data stated in \cite{FGS}, as will we see in details later on. 
On the other side, we investigate existence, stability and metastability properties of the steady states also 
in the cases of the dissipation flux functions \eqref{eq:Qnonmonotone} and \eqref{eq:Qnonbound}. 

Before presenting our results, we discuss the three choices of the dissipation flux function $Q$:
first, we focus the attention on the cases \eqref{eq:Qmono}, \eqref{eq:Qnonmonotone}
and then we will comment the case \eqref{eq:Qnonbound}.

The interaction between convection and the saturating dissipation fluxes \eqref{eq:Qmono} and \eqref{eq:Qnonmonotone} 
was originally studied in \cite{KurRos} and \cite{KurLevRos}, respectively. 
If comparing equation
\begin{equation}\label{eq:nonlinearQ}
	u_t+f(u)_x=Q(\e u_x)_x, \qquad \qquad x\in(-\ell,\ell), \;t>0
\end{equation}
with dissipation flux function \eqref{eq:Qmono} or \eqref{eq:Qnonmonotone}, with the classical viscous conservation law 
\begin{equation}\label{eq:visc-cons-law}
	u_t+f(u)_x=\e u_{xx}, \qquad \qquad \e>0,
\end{equation}
(which corresponds to \eqref{eq:nonlinearQ} with the choice $Q(s)=s$), the main novelty 
is that large amplitude solutions develop discontinuities within finite time, while small solutions remain smooth for all times.
Before commenting the previous results obtained for \eqref{eq:nonlinearQ}, 
let us briefly discuss what happens when the convection is absent, i.e. $f=0$ in \eqref{eq:nonlinearQ}.
The equation
\begin{equation}\label{eq:f=0}
	u_t=Q(u_x)_x,
\end{equation}
with $Q$ satisfying \eqref{eq:Qmono} was originally proposed in \cite{Ros}, 
where the author extended the Ginzburg--Landau free-energy functionals to include interaction due to high gradients. 
On the other hand, the case when $Q$ is given by \eqref{eq:Qnonmonotone} was introduced in \cite{PerMal} in the context of image processing.
The solutions of equation \eqref{eq:f=0} can exhibit \emph{hyperbolic phenomena}, such as persistence of discontinuous solutions.
These topics have been studied in \cite{BerDalPas92} when the function $Q$ satisfies \eqref{eq:Qmono}.
Precisely, the authors consider the Cauchy problem for \eqref{eq:f=0} and prove that
if the initial datum $u_0$ is smooth, then the solution remains smooth for all times.
However, if the initial datum is discontinuous, then the solution is continuous for $t>T$, where $T$ could be either $0$ or strictly positive.
The instantaneously smoothness of the solution depends on the degeneracy of $Q$, namely
\begin{align*}
	\int_0^\infty sQ'(s)\,ds=\infty \qquad \Longrightarrow \qquad T=0, \\
	\int_0^\infty sQ'(s)\,ds<\infty \qquad \Longrightarrow \qquad T>0.
\end{align*}
Note that in the model example \eqref{MC} previously studied in \cite{FGS} (the \emph{mean curvature operator in Euclidean space}), we have $T>0$.
The IBVP for the equation \eqref{eq:f=0} and $Q$ given by \eqref{eq:Qnonmonotone} has been studied in \cite[Section 2]{KurLevRos}.

If a nonlinear convection term is added in \eqref{eq:f=0}, that is  equation \eqref{eq:nonlinearQ} is considered, the situation drastically changes.
Indeed, even if we consider a smooth initial datum, the solution may develop discontinuities in a finite time. 
The global existence of a unique smooth solution for the Cauchy problem 
\begin{equation*}
	\begin{cases}
		u_t+f(u)_x=Q(u_x)_x,\\
		u(x,0)=u_0(x),
	\end{cases}
\end{equation*}
where $Q$ either satisfies \eqref{eq:Qmono} or is given by \eqref{eq:Qnonmonotone}, has been investigated in \cite{KurRos} and \cite{KurLevRos}, respectively.
In particular, there exists a unique global (classical) solution if the initial datum is sufficiently small and either compactly supported or periodic.
On the other hand, it is well known that sufficiently large (smooth) initial data generate solutions that become discontinuous in finite time, see \cite{EngSch06,GooKurRos}.
In particular, in \cite{GooKurRos} the authors consider both the Cauchy problem and an IBVP for \eqref{eq:nonlinearQ} 
with either \eqref{eq:Qmono} or \eqref{eq:Qnonmonotone} dissipation fluxes and prove that, 
for certain flux functions $f$ and large initial data $u_0$, there exists a finite breaktime $T > 0$ such that 
\begin{equation*}
	\lim_{t\to T^-}\Vert u_x(\cdot,t)\Vert_{{}_{L^\infty}}=+\infty.
\end{equation*}
Moreover, in \cite{GooKurRos} it was numerically shown that both continuous and discontinuous steady states are strong attractors of a wide class of initial data. 
Finally, let us recall that the blow up in finite time is a consequence of the boundedness of the dissipation flux function $Q$ 
and that if $Q\in C^2(\R)$ is monotone and unbounded, then the derivative $u_x$ remains bounded for all times $t\geq0$.

To conclude this discussion, we briefly mention that equation \eqref{eq:f=0} has been studied also in the presence of a reaction term (see, e.g. \cite{KurRos2}).

Regarding the unbounded and singular case \eqref{eq:Qnonbound}, the nonlinear differential operator  
\begin{equation}\label{mink}
	\textrm{div}\left(\frac{\nabla u}{\sqrt{1-|\nabla u|^2}}\right)
\end{equation}
appears in many applications and it is usually meant as \emph{mean curvature operator in the Lorentz-Minkowski space}.
It is of interest in differential geometry, general relativity and appears in the nonlinear theory of electromagnetism,
where it is usually referred to as Born-Infeld operator; for details see, among others, \cite{BarSim}, \cite{BonDavPom} and references therein.
Recently, the equation
\begin{equation*}
	\textrm{div}\left(\frac{\nabla u}{\sqrt{1-|\nabla u|^2}}\right)+f(u)=0
\end{equation*}
has been extensively studied in many papers and both the boundary value problem (with different boundary conditions) 
and the case of the whole space $\R^n$ with $n\geq1$ are considered.
The bibliography is so rich that it would be impossible to mention all the contributions; here we refer the readers to \cite{Azz, BerJebMaw, BosGar, Huang} and references therein.

On the contrary, the evolution PDE associated to \eqref{mink} (both with or without convection and/or reaction terms) seems, to the best of our knowledge,  
almost unexplored even if it has a considerable appeal from both physical and mathematical point of view.
We are thus here interested in studying the interaction between the {\it mean curvature operator in the Lorentz-Minkowski space} 
and the convection term in the initial boundary value problem \eqref{NonLBurg}, so that to make the literature more complete.
The main difference with respect to the cases \eqref{eq:Qmono} and \eqref{eq:Qnonmonotone} is that 
we have an unbounded and singular dissipation flux function, which satisfies $\min_s Q'(s)=1$. 
As we will see in Section \ref{sec3} (cfr. Proposition \ref{prop:aprioriux-unbound}), 
this implies that smooth initial data do not develop discontinuities in finite time and that the property
$\e\Vert u_x(\cdot,t)\Vert_{{}_{L^\infty}}\leq1$ is preserved for any time $t\geq0$.
As mentioned before, this property is not true in general for the cases \eqref{eq:Qmono} and \eqref{eq:Qnonmonotone},
but an additional condition on the smallness of $\Vert f(u_0)\Vert_{{}_{L^\infty}}$ is needed to have an upper bound on $\Vert u_x(\cdot,t)\Vert_{{}_{L^\infty}}$
(cfr. Propositions \ref{prop:aprioriux-Q} and \ref{prop:aprioriux-nonmono}).

\vskip0.2cm
We close this Introduction with a brief presentation of the main results and the plan of the paper.
Section \ref{sec2} is devoted to the existence of monotone stationary solutions for \eqref{NonLBurg},
and it is divided into three subsections, corresponding to the three different choices of $Q$ \eqref{eq:Qmono}, \eqref{eq:Qnonmonotone}, \eqref{eq:Qnonbound}.
In all the cases, we give necessary and sufficient conditions for the existence of a unique (smooth) monotone stationary solution, 
which is strictly increasing if $u_-<u_+$, and strictly decreasing if $u_->u_+$.
The main difference between the bounded cases \eqref{eq:Qmono}, \eqref{eq:Qnonmonotone} and the unbound case \eqref{eq:Qnonbound}
is that in the first ones we have to impose conditions on the flux functions $f,Q$, on the ratio $\frac{2\ell}\e$ and on the boundary values $u_\pm$, 
while in the last one we only have to ask for $2\ell>\e|u_--u_+|$, (see Propositions \ref{prop:staz-mono}, \ref{prop:ex-nonmono} and \ref{prop:ex-unbound}).
Therefore, in the case \eqref{eq:Qnonbound} we have a smooth connection for any $u_-\neq u_+$, for any flux $f$ and for any $\e>0$, provided that $\ell$ is sufficiently large.
On the contrary, in the cases \eqref{eq:Qmono} or \eqref{eq:Qnonmonotone} even if $\ell$ is large, there exists a smooth steady state 
if and only if $f$ and $u_\pm$ satisfy an appropriate condition (see conditions \eqref{eq:condexmono} and \eqref{eq:condexnonmono}).

In Section \ref{sec3} we study the stability of the steady states introduced in Section \ref{sec2}, and we give sufficient conditions such that the solution 
to the IBVP \eqref{NonLBurg} converges to the monotone steady state as $t\to+\infty$.
As Section \ref{sec2}, Section \ref{sec3} is divided in three subsections, corresponding to the three different choices of the dissipation flux $Q$.
In all the cases, the first step is an \emph{a priori} estimate on $\Vert u_x(\cdot,t)\Vert_{{}_{L^\infty}}$.
Precisely, we have to prove that there is no blow up in finite time for such a quantity.
As it was previously mentioned, in the case \eqref{eq:Qnonbound} the condition $\e\Vert u'_0\Vert_{{}_{L^\infty}}<1$ guarantees global existence of (smooth) solutions,
while in the bounded cases \eqref{eq:Qmono}, \eqref{eq:Qnonmonotone} also a condition on $u_0$ is needed.
Moreover, in order to have stability of the monotone steady states, we also have to impose (in all the three cases) a condition on $f'$ that reads as
\begin{equation}\label{eq:cond-f'}
	\max_{u\in[u_-,u_+]}|f'(u)|\leq C,
\end{equation}
where the constant $C>0$ depends on $Q$.
If considering  \eqref{MC}, \eqref{eq:Qnonmonotone} and \eqref{eq:Qnonbound}, since we have an explicit formula for $Q$,
we can state that the constant $C$ appearing in \eqref{eq:cond-f'} is \emph{proportional} to $\e/\ell^2$. 
This implies that, if $u_\pm$ and $f$ are fixed, we can choose $\ell$ and $\e$ so that
there exists a unique monotone (smooth) steady state $v$ which is \emph{exponentially stable}, namely there exists $K>0$ such that
\begin{equation*}
	\Vert u(\cdot,t)- v\Vert_{{}_{L^2}} \leq e^{-K t} \Vert u_0- v\Vert_{{}_{L^2}}.
\end{equation*}
In conclusion, the condition \eqref{eq:cond-f'} implies \emph{fast} convergence to the asymptotic limit as $t\to+\infty$; 
these results are stated in Theorems \ref{thm:stab-mono}, \ref{thm:stab-nonmono}, \ref{thm:stab-unbound}. 
Let us stress that both in Sections \ref{sec2} and \ref{sec3} we treat the case of a general function $f\in C^2(\R)$ and $\e>0$ does not necessarily need to be small.

Going further, in Section \ref{sec4} we focus the attention on the case when condition \eqref{eq:cond-f'} is not satisfied;  
we consider a Burgers-type convection term and we think of $\e$ as a small parameter.
Motivated by the behavior of the solutions to the viscous Burgers equation with linear diffusion \eqref{eq:visc-cons-law} 
(see, for instance, \cite{FLMS, LafoOMal94, LafoOMal95, MS, ReynWard95, Str14}),
in Section \ref{sec4} we investigate  the phenomenon of \emph{metastability} 
(whereby the time dependent solution reaches its asymptotic configuration in an exponentially, with respect to $\e\ll1$, long time interval) in the case of a nonlinear diffusion. 
In particular, we show that a metastable behavior also appears for the solutions to \eqref{NonLBurg} with dissipation fluxes \eqref{eq:Qnonmonotone}, \eqref{eq:Qnonbound}
(the case of the mean curvature operator in the Euclidean space \eqref{MC} has been recently studied in \cite{FGS});
in these cases condition \eqref{eq:cond-f'} becomes 
\begin{equation}\label{ultima}
	\max_{u\in[u_-,u_+]}|f'(u)|\leq c\e,
\end{equation}
for some $c>0$ independent on $\e$.
This condition is very restrictive since it is satisfied only if $u_\pm$ are small with respect to $\e\ll1$ and, 
if \eqref{ultima} holds true, the solutions do not exhibit a metastable dynamics and they reach the asymptotic limit after a time $T_\e=\mathcal{O}\left(\e^{-1}\right)$;
on the other hand, if $u_\pm$ are sufficiently large, then condition \eqref{ultima} is not satisfied and the stability results of Section \ref{sec3} do not hold anymore. 
In this case we will prove that we still have convergence to the steady state, 
but the time needed for the solutions to reach the asymptotic limit becomes $T_\e=\mathcal{O}\left(\exp(|f'(u_\pm)|/\e)\right)$.
In this case we thus have a {\it slow} convergence to the asymptotic limit as $t \to +\infty$. 

As we will see, this implies that when $T_\e=\mathcal{O}\left(\e^{-1}\right)$ it is possible to accelerate time by a factor $\e^{-1}$, 
and in the new scale time we have convergence after a time which is independent on $\e$; conversely, 
any such kind of acceleration is hopeless in the case $T_\e=\mathcal{O}\left(\exp(|f'(u_\pm)|/\e)\right)$ (for further details see Section \ref{sec:4concl}).

Finally, in Section \ref{sec5} we show some numerical simulations for the solutions to \eqref{NonLBurg} (both in the case \eqref{eq:Qnonmonotone} and \eqref{eq:Qnonbound}), confirming the results obtained in Section \ref{sec4} on the speed rate of the convergence.

\section{Existence of monotone stationary solutions}\label{sec2}
In this section we study the existence of smooth stationary solutions for the problem \eqref{NonLBurg} 
in the three different cases of dissipation flux function $Q$ we presented in the Introduction. 

\subsection{Stationary solutions on the whole line} 
Before stating the results for the IBVP \eqref{NonLBurg}, we consider the equation in the whole real line,
\begin{equation}\label{eq:allR}
	u_t+{f(u)}_x={Q(\e u_x)}_x, \qquad \qquad x\in\R,\; t>0,
\end{equation} 
and we look for traveling wave solutions connecting the values $u_+\neq u_-$; precisely,  we look for solutions of the form $\phi(x-\lambda t)$, 
where the profile $\phi:\R\to\R$ is a monotone function satisfying
\begin{equation*}
	\lim_{x\to\pm\infty}\phi(x)=u_\pm,
\end{equation*}
and the velocity $\lambda\in\R$.
With the change of variable $\xi=x-\lambda t$, it follows that
$$
	-\lambda\phi'+{f(\phi)}_\xi={Q(\e\phi')}_\xi.
$$
Assuming that $\phi'(\pm\infty)=0$ and integrating over $\R$, we deduce that $\lambda$ satisfies the Rankine--Hugoniot conditions
\begin{equation*}
	\lambda=\frac{f(u_+)-f(u_-)}{u_+-u_-}.
\end{equation*}
In particular, we have stationary solutions if and only if $\lambda=0$, namely
\begin{equation*}
	f(u_+)=f(u_-).
\end{equation*}
In this case, the profile $\phi$ satisfies 
\begin{equation}\label{eq:profile-fi}
	Q(\e\phi'(x))=f(\phi(x))+Q(0)-f(u_-), \qquad \qquad x\in\R,
\end{equation}
and the existence of a (smooth) monotone profile $\phi$ depends on the choice of the boundary values $u_\pm$, the flux functions $f$ and the dissipation $Q$.
However, if the function $\phi=\phi(x)$ is a solution to \eqref{eq:profile-fi}, then the functions $\phi_c:=\phi_c(x)=\phi(x-c)$ are solutions to \eqref{eq:profile-fi} as well, for all $c\in\R$.
Regarding the existence, observe that, if considering a convex flux function $f$ and  a dissipation flux function satisfying $Q(s)s>0$,
there are no monotone increasing solutions to \eqref{eq:profile-fi}.

In conclusion, we can say that a necessary condition for the existence of stationary solutions to \eqref{eq:allR} connecting $u_-$ and $u_+$ is $f(u_+)=f(u_-)$;
in particular, if the flux function $f$ is convex, there exist only stationary solutions connecting $u_->u_+$. 
As we will see in the next subsections, the situation is different when considering the problem on a bounded interval.

\subsection{The case of a monotone and bounded dissipation flux function}\label{sec:exQmono}
To start with, we study the existence of smooth stationary solutions to  \eqref{NonLBurg} 
with $Q$ satisfying \eqref{eq:Qmono}. 
We stress that the special case of a mean curvature type diffusion in the Euclidean space \eqref{MC}
has been already studied in \cite{FGS}.

For simplicity, we look for {\it increasing} steady states (the case of decreasing steady states being completely symmetric), and we assume $u_-<u_+$.
We prove the following result.
\begin{proposition}\label{prop:staz-mono}
Let $\e>0$ and $f,Q\in C^2(\R)$, with $Q$ satisfying \eqref{eq:Qmono}, and consider the boundary value problem
\begin{equation}\label{eq:staz-mono-BV}
	\begin{cases}
	{Q(\e v_x)}_x=f(v)_x, \qquad \qquad x\in(-\ell,\ell),\\
	v(\pm\ell)=u_\pm,
	\end{cases}
\end{equation}
for some $u_-<u_+$.
There exists a unique (smooth) increasing solution to \eqref{eq:staz-mono-BV} if and only if 
\begin{equation}\label{eq:condexmono}
	M-m < Q_\infty  \qquad \mbox{ and } \qquad \frac{2\ell}\e>A,
\end{equation}
where 
$$m:=\min_{u \in [u_-, u_+]} f(u), \qquad M:=\max_{u \in [u_-, u_+]} f(u), \qquad A:=\int_{u_-}^{u_+}\frac{ds}{Q^{-1}\left(f(s)+Q_\infty-M\right)}.$$
\end{proposition}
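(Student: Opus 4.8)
The plan is to integrate the stationary equation once, reducing \eqref{eq:staz-mono-BV} to a first-order ODE carrying a free integration constant $c$, and then to select $c$ by a shooting argument governed by the monotonicity of an explicit integral in $c$. The delicate point will be the behavior of that integral as $c$ approaches the lower end of its admissible range.

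\emph{Reduction.} First I would integrate ${Q(\e v_x)}_x=f(v)_x$ to get $Q(\e v_x)=f(v)+c$ for some $c\in\R$. If $v$ is a smooth increasing solution, then $v_x\ge0$ and $\|v_x\|_{L^\infty}<\infty$ by compactness of $[-\ell,\ell]$, so $f(v)+c=Q(\e v_x)$ takes values in a compact subinterval of $(-Q_\infty,Q_\infty)$; since $v$ sweeps all of $[u_-,u_+]$, this quantity realizes both $m+c$ and $M+c$, forcing $m+c\ge0$ and $M+c<Q_\infty$, hence $c\in[-m,Q_\infty-M)$ and, in particular, $M-m<Q_\infty$. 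One checks moreover that $c>-m$: otherwise $v_x$ would vanish at a point where $f(v)=m$, and the inverse function $x=x(v)$ would fail to be continuous there, by the very estimates that will make $\Phi(c)\to+\infty$ as $c\to-m$ below. Thus $c\in(-m,Q_\infty-M)$, $f(v)+c\in[m+c,M+c]\subset(0,Q_\infty)$, and since $Q'>0$ the inverse $Q^{-1}$ is a $C^2$ diffeomorphism of $(-Q_\infty,Q_\infty)$ onto $\R$; the equation becomes the first-order problem $\e v_x=Q^{-1}(f(v)+c)>0$, $v(-\ell)=u_-$.

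\emph{Shooting.} For $c\in(-m,Q_\infty-M)$ I would set
\[
  \Phi(c):=\int_{u_-}^{u_+}\frac{ds}{Q^{-1}(f(s)+c)}.
\]
By Cauchy--Lipschitz the above first-order problem has a unique solution, which is increasing and stays in $[u_-,u_+]$; separating variables, it reaches $u_+$ exactly after $x$-length $\e\,\Phi(c)$, so $v(\ell)=u_+$ if and only if $\Phi(c)=2\ell/\e$. Hence increasing solutions of \eqref{eq:staz-mono-BV} correspond bijectively to roots $c$ of $\Phi(c)=2\ell/\e$, and smoothness of any such $v$ follows by bootstrapping $v_x=\e^{-1}Q^{-1}(f(v)+c)$.

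\emph{Behavior of $\Phi$ and conclusion.} The heart of the argument is to show that $\Phi$ is a continuous, strictly decreasing bijection of $(-m,Q_\infty-M)$ onto $(A,+\infty)$. Strict monotonicity is immediate, the integrand being strictly decreasing in $c$. Continuity and $\Phi(c)\to A$ as $c\to(Q_\infty-M)^-$ follow from dominated convergence, using that on $[c_0,Q_\infty-M)$ the integrand is dominated by $1/Q^{-1}(m+c_0)$; the limiting integrand $1/Q^{-1}(f(s)+Q_\infty-M)$ is bounded (it vanishes where $f(s)=M$), so $A$ is finite precisely under the condition $M-m<Q_\infty$. The remaining — and main — obstacle is $\Phi(c)\to+\infty$ as $c\to(-m)^+$: near a minimizer $s_0$ of $f$ on $[u_-,u_+]$ one has $f(s)-m\le C|s-s_0|$ (quadratic control at an interior minimum, linear at an endpoint) and $Q^{-1}(t)\le Ct$ for small $t>0$ since $Q'(0)>0$, so $\Phi(c)$ is bounded below by a positive multiple of $\int_{|s-s_0|\le\eta}ds/(C|s-s_0|+(c+m))$, which diverges like $|\log(c+m)|$. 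Granting this, if $M-m<Q_\infty$ and $2\ell/\e>A$ then $2\ell/\e$ lies in the range of $\Phi$, producing a unique $c^*$ with $\Phi(c^*)=2\ell/\e$ and hence a unique smooth increasing solution; conversely, any increasing solution forces $M-m<Q_\infty$ by the Reduction step and, since its constant satisfies $c<Q_\infty-M$, comparison in the separated form gives $2\ell=\e\,\Phi(c)>\e\int_{u_-}^{u_+}ds/Q^{-1}(f(s)+Q_\infty-M)=\e A$. It is precisely in the endpoint analysis of $\Phi$ near $c=-m$ that the $C^2$ regularity of $f$ and the nondegeneracy $Q'(0)>0$ (part of \eqref{eq:Qmono}) enter.
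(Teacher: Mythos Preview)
The proposal is correct and follows essentially the same approach as the paper: integrate once to get $Q(\e v_x)=f(v)+c$, separate variables, and show that the shooting map $\Phi(c)=\int_{u_-}^{u_+}ds/Q^{-1}(f(s)+c)$ is a strictly decreasing bijection from $(-m,\,Q_\infty-M)$ onto $(A,+\infty)$. Your endpoint analysis at $c\to(-m)^+$ is more explicit than the paper's (which simply observes that $Q^{-1}$ is differentiable at $0$ so the integrand has a non-integrable singularity at a minimizer of $f$), but the underlying idea is identical.
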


\begin{proof}
Solutions to \eqref{eq:staz-mono-BV} solve
\begin{equation}\label{staz-mono}
	Q(\e v_x)=f(v)+c, \qquad \; v(-\ell)=u_-, \quad v(\ell)=u_+,
\end{equation}
where $c \in \R$ is an integration constant that is uniquely determined by the boundary conditions $u_\pm$. 
Because of the assumptions \eqref{eq:Qmono} on the function $Q$,  one has to impose 
\begin{equation*}
	0< f(v)+ c< Q_\infty, \qquad  \forall\, v \in [u_-,u_+].
\end{equation*}
The latter condition is equivalent to
$$
-m<c< Q_\infty-M,
$$
which implies $M-m < Q_\infty$, that is the first assumption in \eqref{eq:condexmono}.
Going further, since $Q$ is monotone, from \eqref{staz-mono} it follows that
$$\e v_x=Q^{-1}\left(f(v)+c\right),$$
and $v$ is implicitly defined by
$$\int_{u_-}^{v(x)}\frac{ds}{Q^{-1}\left(f(s)+c\right)}=\frac{x+\ell}\e.$$
By imposing $v(\ell)=u_+$ we obtain the following condition
\begin{equation}\label{eq:Fi(c)}
	\int_{u_-}^{u_+}\frac{ds}{Q^{-1}\left(f(s)+c\right)}=\frac{2\ell}\e.
\end{equation}
Denoting by $\Phi(c)$ the function on the left hand side of \eqref{eq:Fi(c)}, there exists a unique solution of \eqref{staz-mono}
if and only if there exists $c_*\in\R$ such that $\Phi(c_*)=2\ell/\e$.
We now observe that the function $\Phi$, defined in $(-m,Q_\infty-M)$, is a continuous function and its derivative is given by
\begin{equation*}
	\Phi'(c)=-\int_{u_-}^{u_+}\frac{\left(Q^{-1}\right)'\left(f(s)+c\right)}{\left[Q^{-1}\left(f(s)+c\right)\right]^2}\,ds<0.
\end{equation*}
Hence $\Phi$ is a decreasing function; moreover
$$\lim_{c\to-m}\Phi(c)=\int_{u_-}^{u_+}\frac{ds}{Q^{-1}\left(f(s)-m\right)}=:A_+,$$
and
$$\lim_{c\to Q_\infty-M}\Phi(c)=\int_{u_-}^{u_+}\frac{ds}{Q^{-1}\left(f(s)+Q_\infty-M\right)}=:A_-.$$
Therefore, $\Phi:(-m,Q_\infty-M)\rightarrow(A_-,A_+)$ and there exists a unique (smooth) solution of \eqref{staz-mono} if and only if
\begin{equation*}
	M-m < Q_\infty, \qquad \qquad A_-<\frac{2\ell}\e<A_+.
\end{equation*}
Observe that, from the definitions of $m$ and $M$, it follows that $f(s)+Q_\infty-M>0$ for any $s\in[u_-,u_+]$, implying that $A_-<\infty$.
On the other hand, there exists $\tilde{s}\in[u_-,u_+]$ such that $f(\tilde{s})-m=0$;
since $f,Q\in C^2(\R)$ with $Q$ satisfying \eqref{eq:Qmono}, we have that $Q^{-1}$ is differentiable in $0$ and, as a consequence, $A_+=+\infty$.
This completes the proof.
\end{proof}
Proposition \ref{prop:staz-mono} gives necessary and sufficient conditions for the existence of a unique smooth increasing stationary solution to \eqref{NonLBurg}.
Similarly, in the case $u_->u_+$ we can find a condition like \eqref{eq:condexmono} for the existence of a unique smooth decreasing stationary solution.
Finally, we stress that if $M-m$ is greater than $Q_\infty$, then it is not possible to connect the values $u_\pm$ with a smooth profile.
 
\begin{remark}\label{rem:ex-staz-mono}
{\rm We make some comments on the assumptions of Proposition \ref{prop:staz-mono}.
First of all, notice that the request $Q'(s)>0$ for all $s$ is fundamental; 
for instance, if $Q(s)\sim s^3$ for $s$ close to $0$ and $f(s)=s^2$, then $A_+<\infty$ and we would have a much more restrictive assumption on $\ell$.

Next, as concerning the first condition in \eqref{eq:condexmono}, we stress that it can be seen either as a restriction on the boundary data $u_\pm$ 
we want to connect (if the functions $Q$ and $f$ are fixed), or as a condition on $Q_\infty$ (that must be taken sufficiently large) if $f$ and $u_\pm$ are fixed. 
We notice that such condition is exactly the one stated in \cite[Theorem 2.1]{FGS}; 
however, since the model considered is slightly different (we here consider $Q(\e u_x)$ instead of  $\e Q(u_x)$, 
with $Q$ given by \eqref{MC}) it does not imply any smallness (with respect to $\e$) assumption on the boundary data $u_\pm$, 
conversely to what happens in \cite{FGS}, where the only smooth solutions connect values $u_\pm$ which are small with respect to $\e$.

Finally, the second condition in \eqref{eq:condexmono} implies that the length of the interval $I$ must be sufficiently large.
We notice that, since $Q^{-1}$ is an increasing function, we have
\begin{equation}\label{eq:A-est}
	A\leq\frac{u_+-u_-}{Q^{-1}(Q_\infty-M+m)}.
\end{equation}
Hence, similarly as before, condition \eqref{eq:condexmono}  is satisfied for any $\e$ either if $u_\pm$ are taken sufficiently close, 
or (in the case $f$ and $u_\pm$ are fixed) if $Q_\infty$ is chosen sufficiently large.
}
\end{remark}

\subsection{The case of a non monotone dissipation flux function}\label{sec:ex-nonmono}
In this subsection we study the existence of smooth stationary solutions to \eqref{NonLBurg} when the function $Q$ is non monotone and bounded.
For simplicity, we fix 
$$
Q(s)=\frac{s}{1+s^2}.
$$
As in Section \ref{sec:exQmono}, we look for {\it strictly monotone} steady states; 
to be more complete, we now focus the attention on the case of decreasing stationary solution 
(as opposite of what we did in Section \ref{sec:exQmono}) and then we assume $u_->u_+$. 
We stress once again that, also in this case, the computations needed to prove the existence of increasing steady states are completely symmetric.

\begin{proposition}\label{prop:ex-nonmono}
Set $\e>0$ and $f\in C^2(\R)$ and consider the boundary value problem
\begin{equation}\label{eq:staz-nonmono-BV}
	\begin{cases}
	\displaystyle\left(\frac{\e v_x}{1+\e^2v_x^2}\right)_x=f(v)_x, \qquad \qquad x\in(-\ell,\ell),\\
	v(\pm\ell)=u_\pm,
	\end{cases}
\end{equation}
for some $u_->u_+$.
There exists a unique (smooth) decreasing solution to \eqref{eq:staz-nonmono-BV} satisfying 
$$-\e^{-1}<v_x(x)<0, \qquad \qquad  \forall\,x\in(-\ell,\ell),$$
if and only if 
\begin{equation}\label{eq:condexnonmono}
	M-m < \frac12 \qquad \mbox{ and } \qquad \frac{2\ell}\e>B,
\end{equation}
where $m=\displaystyle\min_{u \in [u_+, u_-]} f(u)$, $\, M=\displaystyle\max_{u \in [u_+, u_-]} f(u)$ and
$$B:=\int_{u_+}^{u_-}\frac{1+2\sqrt{(f(s)-m)\left(1-(f(s)-m)\right)}}{1-2(f(s)-m)}\,ds.$$
Moreover, there exists a unique (smooth) decreasing solution to \eqref{eq:staz-nonmono-BV} satisfying 
$$v_x(x)<-\e^{-1}, \qquad \qquad  \forall\,x\in(-\ell,\ell),$$
if and only if 
\begin{equation}\label{eq:cond-L}
	M-m < \frac12, \qquad \mbox{ and } \qquad L_-<\frac{2\ell}\e<L_+,
\end{equation}
where
$$
L_-:=\int_{u_+}^{u_-}\!\!\!\frac{2(M-f(s))}{1+\sqrt{1-4\left(f(s)-M\right)^2}}\,ds, \qquad 
L_+:=\int_{u_+}^{u_-}\!\!\!\frac{1-2(f(s)-m)}{1+2\sqrt{(f(s)-m)\left(1-(f(s)-m)\right)}}\,ds.$$
\end{proposition}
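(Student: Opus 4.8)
The plan is to follow the scheme of the proof of Proposition~\ref{prop:staz-mono}: integrate the stationary equation once, invert the (now multivalued) dissipation flux on the branch dictated by the prescribed sign of $v_x$, and reduce the existence and uniqueness of the connection to the solvability of a scalar equation $\Phi(c)=2\ell/\e$ for the integration constant~$c$, where $\Phi$ is an explicit, strictly monotone function whose range is found by evaluating its two boundary limits.

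First I would integrate the equation in \eqref{eq:staz-nonmono-BV} to get $Q(\e v_x)=f(v)+c$ for some $c\in\R$, where $Q(s)=s/(1+s^2)$. Since $Q(\R)=(-\tfrac12,\tfrac12)$, the requirement that $f(v)+c$ lie in the range of $Q$ for every $v\in[u_+,u_-]$ means $-\tfrac12<m+c$ and $M+c<0$, that is $c\in(-\tfrac12-m,\,-M)$, which is possible if and only if $M-m<\tfrac12$ --- the first condition in both \eqref{eq:condexnonmono} and \eqref{eq:cond-L}. Because $Q$ is increasing on $(-1,1)$ and decreasing on $(-\infty,-1)$, for $w\in(-\tfrac12,0)$ the equation $Q(\e v_x)=w$ has exactly the two roots
\[
	\e v_x=\frac{1-\sqrt{1-4w^2}}{2w}\in(-1,0), \qquad\qquad \e v_x=\frac{1+\sqrt{1-4w^2}}{2w}\in(-\infty,-1),
\]
the first one corresponding to $-\e^{-1}<v_x<0$ and the second one to $v_x<-\e^{-1}$. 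For $c$ inside the open interval above and $v\in[u_+,u_-]$, the number $w=f(v)+c$ stays in a compact subinterval of $(-\tfrac12,0)$, so on each branch the right-hand side of the ODE $\e v_x=Q^{-1}_{\mathrm{br}}(f(v)+c)$ is a $C^1$, strictly negative function of $v$; hence the Cauchy problem with $v(-\ell)=u_-$ has a unique strictly decreasing solution staying in that regime, and imposing $v(\ell)=u_+$ is equivalent, after separating variables, to
\[
	\frac{2\ell}{\e}=\int_{u_+}^{u_-}\frac{dv}{\bigl|Q^{-1}_{\mathrm{br}}\bigl(f(v)+c\bigr)\bigr|}=:\Phi_{\mathrm{br}}(c).
\]

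Rationalizing $1\mp\sqrt{1-4w^2}$ one gets, for the first branch, $\Phi_1(c)=\int_{u_+}^{u_-}\frac{1+\sqrt{1-4w^2}}{2|w|}\,dv$ and, for the second, $\Phi_2(c)=\int_{u_+}^{u_-}\frac{2|w|}{1+\sqrt{1-4w^2}}\,dv$, with $w=f(v)+c$; note the two integrands are mutually reciprocal. Setting $g(w)=\frac{1+\sqrt{1-4w^2}}{2|w|}$, a short computation gives $g'(w)>0$ on $(-\tfrac12,0)$, so $\Phi_1$ is strictly increasing and $\Phi_2$ (whose integrand is $1/g$) is strictly decreasing, and both are continuous on $(-\tfrac12-m,-M)$. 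It then remains to compute the two endpoint limits. Using the identity $1-4\bigl(f(v)-\tfrac12-m\bigr)^2=4\bigl(f(v)-m\bigr)\bigl(1-(f(v)-m)\bigr)$, the limit $c\to(-\tfrac12-m)^+$ gives $\Phi_1(c)\to B$ and $\Phi_2(c)\to L_+$, while $c\to(-M)^-$ gives $\Phi_2(c)\to L_-$; all three are finite precisely because $M-m<\tfrac12$ keeps the relevant denominators bounded away from zero. On the other hand $\Phi_1(c)\to+\infty$ as $c\to(-M)^-$: denoting by $\tilde s\in[u_+,u_-]$ a maximizer of $f$, near $\tilde s$ the integrand of $\Phi_1$ is comparable to $1/|f(v)+c|$, which increases (monotone convergence) to $1/|f(v)-M|$ as $c\to-M$, and since $f\in C^2$ this limit function is not integrable near $\tilde s$ --- the analogue of the equality $A_+=+\infty$ in Proposition~\ref{prop:staz-mono}. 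Hence $\Phi_1$ maps $(-\tfrac12-m,-M)$ bijectively onto $(B,+\infty)$ and $\Phi_2$ maps it bijectively onto $(L_-,L_+)$; therefore there exists a (unique) admissible $c$ if and only if $2\ell/\e>B$, respectively $L_-<2\ell/\e<L_+$, which together with $M-m<\tfrac12$ is exactly \eqref{eq:condexnonmono}, respectively \eqref{eq:cond-L}.

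The point where some care is needed, as opposed to Proposition~\ref{prop:staz-mono}, is the non-monotonicity of $Q$: one must keep the two branches of $Q^{-1}$ separated, and note that a smooth decreasing connection cannot jump from one regime to the other without passing through $v_x=-\e^{-1}$, i.e.\ through the value $f(v)+c=-\tfrac12=Q(-1)$ at which $Q'$ vanishes; this is exactly why the statement is split into the two disjoint cases $-\e^{-1}<v_x<0$ and $v_x<-\e^{-1}$. Once the correct branch of $Q^{-1}$ has been selected and the change of variables performed, the remaining verifications --- the sign of $g'$ and the blow-up of $\Phi_1$ at $c=-M$ --- are routine.
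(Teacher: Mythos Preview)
Your proof is correct and follows essentially the same approach as the paper's: integrate once to reduce to $Q(\e v_x)=f(v)+c$, derive the admissible range $c\in(-\tfrac12-m,-M)$ forcing $M-m<\tfrac12$, split into the two branches of $Q^{-1}$, separate variables, and show that the resulting function of $c$ is strictly monotone with the stated endpoint limits. Your organization around the single auxiliary function $g(w)=\tfrac{1+\sqrt{1-4w^2}}{2|w|}$ and the observation that the two integrands are reciprocals is a mild tidying, but the logic and computations are the same as in the paper.
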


\begin{proof}
We proceed as in the proof of Proposition \ref{prop:staz-mono}.
In this case, solutions to \eqref{eq:staz-nonmono-BV} solve
\begin{equation}\label{staz-nonmono}
	\frac{\e v_x}{1+\e^2v_x^2}= f(v)+c,
\end{equation}
where $c \in \R$ is an integration constant; since the left hand side in \eqref{staz-nonmono} is strictly negative and greater than $-1/2$ 
(that is the global minimum of the function $Q(s)=\frac{s}{1+s^2}$), we must assume
\begin{equation}\label{eq:boundcondnonmono}
	-\frac12< f(v)+ c<0, \qquad  \forall\, v \in [u_+,u_-].
\end{equation}
As a consequence, we have  $-m-\frac12<c<-M$, which implies $M-m < 1/2$, that is the first assumption in \eqref{eq:condexnonmono}.
Under this condition, from \eqref{staz-nonmono} it follows
\begin{equation*}
	\e v_x = \frac{1\pm\sqrt{1-4\left(f(v)+c\right)^2}}{2\left(f(v)+c\right)}.
\end{equation*}
Observe that \eqref{eq:boundcondnonmono} implies that $v_x$ is well defined and we have two possibilities:  $\e v_x<-1$ and $-1<\e v_x<0$.
First, let us consider the case $-1<\e v_x<0$; since  $v_x$ satisfies
\begin{equation*}
	\e v_x=\frac{2(f(v)+c)}{1+\sqrt{1-4\left(f(v)+c\right)^2}},
\end{equation*}
then $v$ is implicitly defined by
$$-\int_{v(x)}^{u_-}\frac{1+\sqrt{1-4 \left(f(s)+c\right)^2}}{2(f(s)+c)}\,ds=\frac{x+\ell}\e.$$
Imposing $v(\ell)=u_+$ we obtain the condition
$$\Phi(c):=-\int_{u_+}^{u_-}\frac{1+\sqrt{1-4 \left( f(s)+c\right)^2}}{2(f(s)+c)}\,ds=\frac{2\ell}\e.$$
In this case, the function $\Phi$ is defined in $(-m-\frac12,-M)$ and its derivative,  given by
\begin{equation*}
	\Phi'(c)=\int_{u_+}^{u_-}\frac{1+\sqrt{1-4\left(f(s)+c\right)^2}}{2(f(s)+ c)^2\sqrt{1-4\left(f(s)+c\right)^2}}\,ds,
\end{equation*}
is positive, implying  that $\Phi$ is an increasing function. 
Moreover
$$\lim_{c\to-m-\frac12}\Phi(c)=\int_{u_+}^{u_-}\frac{1+2\sqrt{(f(s)-m)\left(1-(f(s)-m)\right)}}{1-2(f(s)-m)}\,ds=:B,$$
and
$$\lim_{c\to-M}\Phi(c)=\int_{u_+}^{u_-}\frac{1+\sqrt{1-4\left(f(s)-M\right)^2}}{2(M-f(s))}\,ds=:B_+.$$
Since $f\in C^2(\R)$, the latter integral is infinite, namely $B_+=+\infty$.
Therefore, there exists a unique decreasing (smooth) solution $v$ to \eqref{staz-nonmono} satisfying $v_x>-\e^{-1}$ if and only if condition \eqref{eq:condexnonmono} holds.

We now consider the case $v_x<-\e^{-1}$; similarly as before one obtains 
\begin{equation*}
	\e v_x = \frac{1+\sqrt{1-4\left(f(v)+c\right)^2}}{2\left(f(v)+c\right)},
\end{equation*}
together with the condition
\begin{equation*}
	\Psi(c):=-\int_{u_+}^{u_-}\frac{2(f(s)+c)}{1+\sqrt{1-4 \left(f(s)+c\right)^2}}\,ds=\frac{2\ell}\e.
\end{equation*}
In this case the function $\Psi$ is decreasing, implying that there exists a unique decreasing (smooth) solution $v$ to \eqref{staz-nonmono} satisfying $v_x<-\e^{-1}$ if and only if
condition \eqref{eq:cond-L} holds, where $L_{\pm}$ are defined by
$$L_-:=\lim_{c\to-M}\Psi(c)=\int_{u_+}^{u_-}\frac{2(M-f(s))}{1+\sqrt{1-4 \left(f(s)-M\right)^2}}\,ds,$$
and
$$L_+:=\lim_{c\to-m-\frac12}\Psi(c)=\int_{u_+}^{u_-}\frac{1-2(f(s)-m)}{1+2\sqrt{(f(s)-m)\left(1-(f(s)-m)\right)}}\,ds.$$
The proof is now complete.
\end{proof}
\begin{remark}\label{rem:ex-staz-nonmono}
{\rm
As concerning the second condition in assumptions \eqref{eq:condexnonmono}, we have 
\begin{equation}\label{eq:B-est}
	u_--u_+\leq B\leq\frac{1+2\sqrt{(M-m)(1-(M-m))}}{1-2(M-m)}(u_--u_+),
\end{equation}
Hence, in order for the second assumption in \eqref{eq:condexnonmono} to be satisfied for any choice of $\e$ and $\ell$ 
(that is, in order to have $B\ll1$), one has to take $u_\pm$ close.
On the other hand, if $f$ and $u_\pm$ are fixed, we have to choose  $\ell/\e$ large in order to have \eqref{eq:condexnonmono}.

Regarding assumptions \eqref{eq:cond-L}, we have the following estimates for the constants $L_-, L_+$:
\begin{equation*}
 	L_-\leq\frac{2(M-m)}{1+\sqrt{1-4 \left(m-M\right)^2}}(u_--u_+), 
\end{equation*}
and 
\begin{equation*}
	\frac{1-2(M-m)}{1+2\sqrt{(M-m)(1-(M-m))}}(u_--u_+)\leq L_+\leq u_--u_+.
\end{equation*}
In particular, 
\begin{equation*}
	\lim_{u_+\to u_-}L_-=\lim_{u_+\to u_-}L_+=0.
\end{equation*}
Roughly speaking, the second assumption in \eqref{eq:cond-L} imposes a very restrictive choice for the length of the interval $\ell$ 
even if we choose either  $u_--u_+\ll1$ or  $\e\ll1$.
}
\end{remark}

\subsection{The case of a monotone and unbounded dissipation flux function}\label{sec:ex-unbound}
In this last subsection we study the existence of smooth stationary solutions to \eqref{NonLBurg} 
when the function $Q$ is monotone and unbounded, and we consider the specific case
$$
Q(s)=\frac{s}{\sqrt{1-s^2}}.
$$
We look again for {\it strictly monotone} steady states, and we here consider increasing stationary solutions ($u_+>u_-$) being, as before, the decreasing case identical.
\begin{proposition}\label{prop:ex-unbound}
Set $\e>0$ and $f\in C^2(\R)$ and consider the boundary value problem
\begin{equation}\label{eq:staz-unbound-BV}
	\begin{cases}
	\displaystyle\left(\frac{\e v_x}{\sqrt{1-\e^2v_x^2}}\right)_x=f(v)_x, \qquad \qquad x\in(-\ell,\ell),\\
	v(\pm\ell)=u_\pm,
	\end{cases}
\end{equation}
for some $u_+>u_-$.
There exists a unique (smooth) increasing solution to \eqref{eq:staz-unbound-BV} if and only if $$\frac{2\ell}\e>u_+-u_-.$$
\end{proposition}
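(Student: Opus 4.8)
The plan is to follow the template of the proof of Proposition~\ref{prop:staz-mono}, exploiting the fact that here $Q\colon(-1,1)\to\R$ is a $C^2$ diffeomorphism onto the \emph{whole} real line, with explicit inverse $Q^{-1}(y)=y/\sqrt{1+y^2}$. First I would integrate the equation in \eqref{eq:staz-unbound-BV} once, obtaining
\[
Q(\e v_x)=f(v)+c, \qquad v(-\ell)=u_-,\quad v(\ell)=u_+,
\]
with $c\in\R$ an integration constant fixed by the boundary data. Since $Q^{-1}$ is defined on all of $\R$, no condition of the type $M-m<Q_\infty$ is needed now; the only requirement for an \emph{increasing} solution is $v_x>0$, i.e. $Q^{-1}(f(v)+c)>0$, i.e.
\[
f(v)+c>0\qquad\text{for all }v\in[u_-,u_+],
\]
which is equivalent to $c>-m$, where $m:=\min_{[u_-,u_+]}f$. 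Note that $|\e v_x|=|Q^{-1}(f(v)+c)|<1$ is then automatic, consistently with the domain of the singular operator.

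Next I would separate variables, writing $\e v_x=Q^{-1}(f(v)+c)$, so that $v$ is implicitly defined by
\[
\int_{u_-}^{v(x)}\frac{ds}{Q^{-1}(f(s)+c)}=\frac{x+\ell}\e ,
\]
and imposing $v(\ell)=u_+$ reduces the problem to finding $c\in(-m,\infty)$ such that $\Phi(c)=2\ell/\e$, where $\Phi(c):=\int_{u_-}^{u_+}ds/Q^{-1}(f(s)+c)$. As in Proposition~\ref{prop:staz-mono}, $\Phi$ is continuous on $(-m,\infty)$ and
\[
\Phi'(c)=-\int_{u_-}^{u_+}\frac{(Q^{-1})'(f(s)+c)}{[Q^{-1}(f(s)+c)]^2}\,ds<0 ,
\]
since $(Q^{-1})'>0$; hence $\Phi$ is strictly decreasing, and existence and uniqueness of the increasing steady state amounts to identifying the range of $\Phi$, together with the smoothness of the resulting $v$ (which follows because $f,Q\in C^2$ and $\e v_x$ stays in $(-1,1)$).

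The core of the argument is thus the computation of the two boundary limits. As $c\to -m^+$, picking $\tilde s\in[u_-,u_+]$ with $f(\tilde s)=m$, near $\tilde s$ the quantity $Q^{-1}(f(s)+c)$ is comparable to $f(s)-m+(c+m)$, which decreases to $f(s)-m$; since $f\in C^2$ this vanishes at $\tilde s$ to order at most two, so $1/(f(s)-m)$ is not integrable near $\tilde s$, and monotone convergence gives $\Phi(c)\to+\infty$. As $c\to+\infty$, for each $s$ we have $Q^{-1}(f(s)+c)=(f(s)+c)/\sqrt{1+(f(s)+c)^2}\to1$ and the integrand is bounded (by $\sqrt2$, say, once $f(s)+c\ge1$), so dominated convergence yields $\Phi(c)\to u_+-u_-$; moreover $\Phi(c)>u_+-u_-$ for every finite $c$ because $Q^{-1}<1$ on $\R$. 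Hence $\Phi\colon(-m,\infty)\to(u_+-u_-,\infty)$ is a decreasing bijection, and there is a unique $c_*$ with $\Phi(c_*)=2\ell/\e$ if and only if $2\ell/\e>u_+-u_-$. I expect the delicate point to be precisely this range computation — in particular pinning down the sharp lower endpoint $u_+-u_-$ via the limit $c\to+\infty$, and controlling the possible degeneracy of $f$ at its minimizer in the limit $c\to-m^+$ (the analogue of the step $A_+=+\infty$ in Proposition~\ref{prop:staz-mono}); reconciling ``strictly increasing'' with the strict inequality $f(v)+c>0$ is routine.
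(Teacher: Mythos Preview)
Your proposal is correct and follows essentially the same route as the paper: integrate once to $Q(\e v_x)=f(v)+c$, require $c>-m$, define $\Phi(c)=\int_{u_-}^{u_+}ds/Q^{-1}(f(s)+c)$ (which is exactly the paper's $\int_{u_-}^{u_+}\frac{\sqrt{1+(f(s)+c)^2}}{f(s)+c}\,ds$), show $\Phi$ is strictly decreasing on $(-m,\infty)$ with $\lim_{c\to-m^+}\Phi=+\infty$ and $\lim_{c\to+\infty}\Phi=u_+-u_-$. If anything, you give more justification for these two limits (via comparison near the minimizer and dominated convergence) than the paper, which simply asserts them.
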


\begin{proof}
Solutions to \eqref{eq:staz-unbound-BV} satisfy
\begin{equation}\label{staz-unbound}
	\frac{\e v_x}{\sqrt{1-\e^2v_x^2}}= f(v)+c,
\end{equation}
where $c \in \R$ is an integration constant, so that we assume 
\begin{equation*}
	f(v)+c>0, \qquad  \forall\, v \in [u_-,u_+],
\end{equation*}
since the left hand side in \eqref{staz-unbound} is strictly positive.
Therefore, if $c>-m$, from \eqref{staz-unbound} it follows 
\begin{equation*}
	\e v_x = \frac{f(v)+c}{\sqrt{1+(f(v)+c)^2}}.
\end{equation*}
Hence, $v$ is implicitly defined by
$$\int_{v(x)}^{u_+}\frac{\sqrt{1+\left(f(s)+c\right)^2}}{f(s)+c}\,ds=\frac{x+\ell}\e,$$
and imposing $v(-\ell)=u_-$ we obtain the condition
$$\Phi(c):=\int_{u_-}^{u_+}\frac{\sqrt{1+\left(f(s)+c\right)^2}}{f(s)+c}\,ds=\frac{2\ell}\e.$$
In this case, the function $\Phi$ is defined in $(-m,+\infty)$ and it is a decreasing function. 
Moreover
$$\lim_{c\to-m}\Phi(c)=\int_{u_-}^{u_+}\frac{\sqrt{1+\left(f(s)-m\right)^2}}{f(s)-m}\,ds=+\infty,$$
and
$$\lim_{c\to+\infty}\Phi(c)=u_+-u_-.$$
Hence, there exists a unique $c_*>-m$ such that $\Phi(c_*)=2\ell$ if and only if $\displaystyle\frac{2\ell}\e>u_+-u_-$.
\end{proof}

As it was mentioned in the Introduction, we stress that in this last case the assumptions needed in order to have existence of a smooth steady state are much less restrictive: 
indeed, for any flux function $f$ and boundary data $u_\pm$, we have a smooth connection if the ratio $\ell/\e$ is large enough.

\section{Stability of monotone stationary solutions}\label{sec3}
This section is devoted to the study of the stability fo the steady states, whose existence has been proved in Section \ref{sec2}; 
as before, we divide the analysis considering separately the three different dissipation fluxes $Q$.
\subsection{Monotone and bounded dissipation flux function}
Our first result proves the stability of the steady states to \eqref{NonLBurg}, with $Q$ satisfying \eqref{eq:Qmono}; 
we refer the reader to \cite{FGS} for the analysis of the special case \eqref{MC}.
We start by proving the following result.
\begin{proposition}\label{prop:aprioriux-Q}
Fix $T > 0$ and let $u(\cdot,t)\in C^3(I)$ be a classical solution to the IBVP \eqref{NonLBurg} for $t\in[0,T]$ with $Q$ satisfying \eqref{eq:Qmono}.
If
\begin{equation}\label{eq:smallness-Q}
	\left\|Q(\e u'_0)\right\|_{{}_{L^\infty}}+2\Vert f(u_0)\Vert_{{}_{L^\infty}}\leq\beta<Q_\infty,
\end{equation}
then 
\begin{equation}\label{stimaux}
	\e\Vert u_x(\cdot,t)\Vert_{{}_{L^\infty}}\leq Q^{-1}(\beta),
\end{equation}
for any $t\in[0,T]$.
\end{proposition}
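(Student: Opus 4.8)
The plan is to derive a parabolic equation for the quantity $w := Q(\e u_x)$ and apply a maximum principle on the space-time cylinder $I \times [0,T]$. Differentiating the PDE $u_t + f(u)_x = Q(\e u_x)_x$ with respect to $x$ and writing things in terms of $w$, one gets that $w$ solves a uniformly parabolic (on the region where $u$ is smooth and $\e u_x$ stays in the relevant range) equation of the form $w_t = a(x,t)\, w_{xx} + (\text{lower order terms involving } f'(u) \text{ and } f''(u))$, where $a = \e Q'(\e u_x) > 0$. The key is to bound $w$ not directly but the combination $w \pm 2 f(u)$ or, more cleanly, to estimate $\Vert w(\cdot,t)\Vert_{L^\infty}$ by comparing with the evolution of $f(u)$, exploiting that $\Vert f(u(\cdot,t))\Vert_{L^\infty} \le \Vert f(u_0)\Vert_{L^\infty}$ need not hold in general but $u$ takes values between $u_-$ and the range determined by the data — actually the cleaner route is the following.

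First I would note that by the comparison principle for the scalar equation, $u(x,t)$ stays between $\min\{u_-, u_+, \inf u_0\}$ and $\max\{u_-, u_+, \sup u_0\}$, so in particular $\Vert f(u(\cdot,t))\Vert_{L^\infty}$ is controlled; however, for the sharp constant in \eqref{eq:smallness-Q} one wants to track $f(u)$ along the flow. The second and main step: set $z := Q(\e u_x) + 2 f(u)$ — no, rather consider that at an interior spatial maximum of $w = Q(\e u_x)$ at a fixed time, one has $w_x = 0$ and $w_{xx} \le 0$, hence $u_x$ has a critical point there and $Q(\e u_x)_x = w_x = 0$ means $u_t = -f(u)_x = -f'(u) u_x$; differentiating once more and evaluating the sign of $\frac{d}{dt}(\text{running max of } w)$ via a Hamilton-type / Rademacher argument, the bad term is controlled by $\Vert f \Vert$ evaluated at the relevant boundary/interior values. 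The upshot is the differential inequality yielding $\Vert w(\cdot,t)\Vert_{L^\infty} \le \Vert Q(\e u_0')\Vert_{L^\infty} + 2\Vert f(u_0)\Vert_{L^\infty}$ — this is exactly the left side of \eqref{eq:smallness-Q}, so $\Vert w(\cdot,t)\Vert_{L^\infty} \le \beta < Q_\infty$ for all $t \in [0,T]$.

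The final step is purely algebraic: since $\beta < Q_\infty$, the value $w(x,t)$ lies in the range $(-Q_\infty, Q_\infty)$ on which $Q$ is a bijection onto $(-Q_\infty,Q_\infty)$ with $Q^{-1}$ well-defined and increasing; from $|Q(\e u_x(x,t))| = |w(x,t) - 2f(u(x,t)) + 2f(u(x,t))| \le \beta$ — more precisely one arranges the estimate so that $|Q(\e u_x(x,t))| \le \beta$ directly — and monotonicity of $Q^{-1}$ gives $\e |u_x(x,t)| \le Q^{-1}(\beta)$, which is \eqref{stimaux}.

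I expect the main obstacle to be setting up the comparison/maximum-principle argument at the level of $w = Q(\e u_x)$ rigorously: one must check that the equation for $w$ is genuinely parabolic on the relevant set (which requires knowing a priori that $\e u_x$ does not escape to $\pm\infty$, i.e. a bootstrap/continuity argument in $t$ closing the estimate), handle the boundary behavior of $w$ at $x = \pm\ell$ (where $u$ is prescribed but $u_x$ is not, so $w$ is not controlled on the parabolic boundary directly and one needs that an interior maximum is what matters, or a Hopf-lemma-type exclusion of boundary maxima), and correctly produce the coefficient $2$ in front of $\Vert f(u_0)\Vert_{L^\infty}$ rather than $1$ — this factor suggests the argument really tracks $w$ plus or minus $f(u)$ and uses the triangle inequality, $|w| \le |w \pm 2f(u)| + 2|f(u)| $ being replaced by a sharper bookkeeping on the two one-sided quantities $w - 2f(u)$ and $w + 2f(u)$ separately. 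A standard continuation argument (the set of $t$ for which \eqref{stimaux} holds is open, closed, and nonempty in $[0,T]$) then upgrades the a priori bound to a genuine bound for all $t \in [0,T]$.
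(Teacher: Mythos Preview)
Your overall strategy --- derive a parabolic equation for an auxiliary quantity built from $Q(\e u_x)$, apply the maximum principle, and close by a continuation argument --- is exactly right, but you have not identified the correct auxiliary quantity, and your various attempts ($w=Q(\e u_x)$ alone, $Q(\e u_x)\pm 2f(u)$, an interior-maximum argument on $w$) each run into a genuine obstruction. If you differentiate the PDE and write an equation for $w=Q(\e u_x)$, you pick up a zeroth-order source term proportional to $f''(u)u_x^2$, which has no sign in general (no convexity of $f$ is assumed here), so neither a direct maximum principle nor your interior-extremum argument closes. The coefficient $2$ you are trying to manufacture in the test quantity is a red herring.

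The paper's trick is to set
\[
	z(x,t):=Q(\e u_x(x,t))-f(u(x,t)),
\]
with coefficient $1$. The point is that the PDE itself reads $u_t=z_x$, so differentiating $z$ in $t$ gives
\[
	z_t=\e Q'(\e u_x)\,u_{xt}-f'(u)\,u_t=\e Q'(\e u_x)\,z_{xx}-f'(u)\,z_x,
\]
a linear parabolic equation with \emph{no zeroth-order term}. Hence $\|z(\cdot,t)\|_{L^\infty}\le\|z(\cdot,0)\|_{L^\infty}$ directly. The factor $2$ in \eqref{eq:smallness-Q} then arises from two uses of the triangle inequality:
\[
	|Q(\e u_x)|\le |z|+|f(u)|\le \|z(\cdot,0)\|_{L^\infty}+\|f(u(\cdot,t))\|_{L^\infty}
	\le \bigl(\|Q(\e u_0')\|_{L^\infty}+\|f(u_0)\|_{L^\infty}\bigr)+\|f(u_0)\|_{L^\infty},
\]
where $\|f(u(\cdot,t))\|_{L^\infty}\le\|f(u_0)\|_{L^\infty}$ follows from the maximum principle for $u$ (so $u(\cdot,t)$ takes values in the range of $u_0$). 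Inverting $Q$ gives \eqref{stimaux}.

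Your worry about the lateral boundary is legitimate but resolves cleanly: since $u(\pm\ell,t)=u_\pm$ is constant in $t$, one has $u_t=0$ there, i.e.\ $z_x(\pm\ell,t)=0$; so $z$ satisfies homogeneous Neumann conditions and the maximum principle still forces the sup of $|z|$ to be non-increasing. Your continuation/bootstrap argument is exactly what the paper does: assume by contradiction a first time $t^*$ at which $\e\|u_x\|_{L^\infty}$ exceeds $Q^{-1}(\beta)$; on $[0,t^*]$ the equations for $u$ and $z$ are uniformly parabolic, the estimate above yields $\e\|u_x(\cdot,t^*)\|_{L^\infty}\le Q^{-1}(\beta)$, contradiction.
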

\begin{remark}\label{rem:ipoex-stab-mono}
{\rm
Let us compare assumption \eqref{eq:smallness-Q} with condition \eqref{eq:condexmono}, which we had to impose in order to obtain the existence of regular steady states.
Assume, without loss of generality that $u_-<u_+$.
First of all, notice that
$$M-m\leq2\Vert f(u_0)\Vert_{{}_{L^\infty}}, \quad \mbox{with} \quad  M=\max_{u\in[u_-,u_+]} f(u) \quad \mbox{and} \quad m=\min_{u\in[u_-,u_+]}f(u).$$
Hence, assumption \eqref{eq:smallness-Q} implies the first condition in \eqref{eq:condexmono}.
Moreover, from \eqref{eq:smallness-Q} it follows
\begin{equation*}
	\left\|Q(\e u'_0)\right\|_{{}_{L^\infty}}\leq\beta-2\Vert f(u_0)\Vert_{{}_{L^\infty}}\leq\beta-(M-m),
\end{equation*}
and, as a consequence
\begin{equation*}
	\e \left\|u'_0\right\|_{{}_{L^\infty}}\leq Q^{-1}\left(\beta-M+m\right).
\end{equation*}
Hence,
\begin{equation*}
	\frac{\e(u_+-u_-)}{2\ell}\leq Q^{-1}\left(\beta-(M-m)\right)<Q^{-1}\left(Q_\infty-M+m)\right),
\end{equation*}
so that, taking into account \eqref{eq:A-est}, we infer
\begin{equation*}
	\frac{2\ell}\e>\frac{u_+-u_+}{Q^{-1}\left(Q_\infty-M+m\right)}\geq A.
\end{equation*}
In conclusion, assumption \eqref{eq:smallness-Q} implies condition \eqref{eq:condexmono}. 
}
\end{remark}

\begin{proof}[Proof of Proposition \ref{prop:aprioriux-Q}]
Similarly as in \cite{FGS}, we define the function
\begin{equation}\label{eq:z-Q}
	z(x,t):=Q\left(\e u_x(x,t)\right)-f(u(x,t)),
\end{equation}
where $u$ is the classical solution to \eqref{NonLBurg} and $Q$ satisfies \eqref{eq:Qmono}.
Therefore, we can rewrite the equation for $u$ in the form
\begin{equation*}
 	u_t=z_x.
\end{equation*}
Differentiating equation \eqref{eq:z-Q} with respect to $t$, we deduce
\begin{equation}\label{eq:z_t-Q}
	z_t=\e Q'(\e u_x)z_{xx}-f'(u)z_x.
\end{equation}
By contradiction, let us assume that there exists $t^*\in(0,T)$ such that $\e\Vert u_x(\cdot,t)\Vert_{{}_{L^\infty}}<C_1$ for $t\in[0,t^*)$ 
and $\e\Vert  u_x(\cdot,t^*)\Vert_{{}_{L^\infty}}=C_1$, for a constant $C_1>Q^{-1}(\beta)$.
As a consequence, the equation in \eqref{NonLBurg} and equation \eqref{eq:z_t-Q} remain parabolic for $t\in[0,t^*]$, and the maximum principle implies
\begin{equation}\label{aprioriu-Q} 
	\Vert u(\cdot,t)\Vert_{{}_{L^\infty}}\leq\Vert u_0\Vert_{{}_{L^\infty}}, \qquad \qquad \forall\,t\in[0,t^*].
\end{equation}
Moreover, the term $f'(u)$ is uniformly bounded and
\begin{equation*}
	\Vert z(\cdot,t)\Vert_{{}_{L^\infty}}\leq\Vert z(\cdot,0)\Vert_{{}_{L^\infty}}=\left\Vert Q(\e u'_0)- f(u_0)\right\Vert_{{}_{L^\infty}}, \qquad \qquad \forall\,t\in[0,t^*].
\end{equation*}
Hence, by using the assumptions \eqref{eq:smallness-Q} and \eqref{aprioriu-Q}, we obtain
\begin{equation*}
	\left\Vert Q\left(\e u_x(\cdot,t)\right)\right\Vert_{{}_{L^\infty}}\leq\beta<Q_\infty, \qquad \qquad \forall\,t\in[0,t^*],
\end{equation*}
and, because of \eqref{eq:Qmono}, we end up with
\begin{equation*}
	\e\Vert u_x(\cdot,t)\Vert_{{}_{L^\infty}}\leq Q^{-1}(\beta), \qquad \qquad \forall\,t\in[0,t^*].
\end{equation*}
This leads to a contradiction, and the proof is complete.
\end{proof}

Denote by $v=v(x)$ the increasing stationary solution of \eqref{NonLBurg} with $Q$ satisfying \eqref{eq:Qmono} defined in Proposition \ref{prop:staz-mono}, namely $v$ solves
\begin{equation*}
	Q(\e v_x)_x-f(v)_x=0, \qquad v(-\ell)=u_-, \quad v(\ell)=u_+.
\end{equation*}
The next goal is to prove that the steady state $v$ is asymptotically stable, that is the solution to \eqref{NonLBurg} converges to $v$ in $L^2$ as $t\to+\infty$.
\begin{theorem}\label{thm:stab-mono}
Let $u$ be a classical solution to the initial boundary value problem \eqref{NonLBurg}, with dissipative flux function $Q$ satisfying \eqref{eq:Qmono},
$u_-<u_+$ and initial datum $u_0 \in C^3(I)$ satisfying \eqref{eq:smallness-Q}. 
Then, there exists a positive constant $K_1$, (depending on $Q, u_0$ and that can be explicitly computed) 
such that, if
\begin{equation}\label{eq:ipo-stab-mono}
	\max_{u \in [u_-, u_+]} |f'(u)| \leq K_1,
\end{equation}
then
\begin{equation*}
	\Vert u(\cdot,t)- v\Vert_{{}_{L^2}} \leq e^{-K_2t} \Vert u_0- v\Vert_{{}_{L^2}},
\end{equation*}
for some $K_2>0$.
\end{theorem}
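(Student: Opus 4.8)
The plan is to study the evolution of the difference $w := u - v$ in $L^2(I)$ and to show that, under the smallness assumption \eqref{eq:ipo-stab-mono} on $f'$, the quantity $\tfrac12\|w(\cdot,t)\|_{L^2}^2$ decays exponentially via a Gronwall-type differential inequality. First I would record the equation satisfied by $w$: subtracting the stationary equation $Q(\e v_x)_x = f(v)_x$ from the equation $u_t = (Q(\e u_x) - f(u))_x$ in \eqref{NonLBurg}, one obtains
\begin{equation*}
	w_t = \bigl( Q(\e u_x) - Q(\e v_x) \bigr)_x - \bigl( f(u) - f(v) \bigr)_x, \qquad w(\pm\ell,t)=0,
\end{equation*}
since $u$ and $v$ share the same boundary data. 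Then I would multiply by $w$ and integrate over $I$; the boundary terms vanish because $w=0$ at $x=\pm\ell$, leaving
\begin{equation*}
	\frac{1}{2}\frac{d}{dt}\|w\|_{L^2}^2 = -\int_I \bigl( Q(\e u_x) - Q(\e v_x) \bigr) w_x \, dx + \int_I \bigl( f(u) - f(v) \bigr) w_x \, dx .
\end{equation*}

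The first integral on the right is the dissipative term and should be negative and coercive. Writing $Q(\e u_x) - Q(\e v_x) = \e \, Q'(\xi) \, w_x$ for some intermediate value $\xi$ between $\e u_x$ and $\e v_x$ (pointwise in $x$), and using that $Q' > 0$, this term equals $-\e \int_I Q'(\xi) w_x^2 \, dx \le 0$. The key point is to get a strictly positive lower bound on $Q'(\xi)$: by Proposition \ref{prop:aprioriux-Q} and \eqref{eq:smallness-Q} we have $\e\|u_x(\cdot,t)\|_{L^\infty} \le Q^{-1}(\beta)$ for all $t$, and $\e\|v_x\|_{L^\infty}$ is bounded by a quantity determined in Proposition \ref{prop:staz-mono}; hence $\xi$ ranges over a fixed compact subinterval of $\R$ on which $Q'$ attains a positive minimum $q_0 > 0$ (depending on $Q$, $u_0$, $u_\pm$, $\ell/\e$). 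This yields the dissipation estimate
\begin{equation*}
	-\int_I \bigl( Q(\e u_x) - Q(\e v_x) \bigr) w_x \, dx \le -\e q_0 \|w_x\|_{L^2}^2 .
\end{equation*}
For the convection term, I would bound $|f(u) - f(v)| \le \bigl(\max_{[u_-,u_+]}|f'|\bigr)|w|$ — using that $u$ stays in the range $[u_-,u_+]$, which follows from the maximum principle bound \eqref{aprioriu-Q} together with the ordering of boundary data (this point may require a small argument, e.g. a comparison showing $u_- \le u \le u_+$, or at least that $u$ takes values in a fixed compact interval on which $|f'|$ is controlled) — and then apply Cauchy--Schwarz to get $\bigl|\int_I (f(u)-f(v)) w_x\,dx\bigr| \le \bigl(\max|f'|\bigr)\|w\|_{L^2}\|w_x\|_{L^2}$.

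Combining the two estimates and invoking the Poincaré inequality $\|w\|_{L^2} \le \tfrac{2\ell}{\pi}\|w_x\|_{L^2}$ on $(-\ell,\ell)$ (valid since $w$ vanishes at both endpoints), I would absorb the convection term into the dissipation term provided $\max|f'|$ is small enough — concretely, if $\max_{[u_-,u_+]}|f'| \le K_1$ with $K_1$ chosen (explicitly) so that $\tfrac{2\ell}{\pi}K_1 < \e q_0$, then the right-hand side is bounded above by $-(\e q_0 - \tfrac{2\ell}{\pi}K_1)\|w_x\|_{L^2}^2 \le -K_2 \|w\|_{L^2}^2$ for $K_2 := \tfrac{\pi^2}{4\ell^2}(\e q_0 - \tfrac{2\ell}{\pi}K_1) > 0$, again using Poincaré. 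This gives $\tfrac{d}{dt}\|w\|_{L^2}^2 \le -2K_2\|w\|_{L^2}^2$, and Gronwall's lemma yields $\|w(\cdot,t)\|_{L^2} \le e^{-K_2 t}\|w(\cdot,0)\|_{L^2}$, which is the claim. I expect the main obstacle to be the bookkeeping that makes $K_1$ (hence $q_0$) genuinely \emph{explicit} in terms of $Q$ and $u_0$: one must carefully track the compact interval containing $\xi$, which requires combining the a priori bound on $\e u_x$ from Proposition \ref{prop:aprioriux-Q}, the a priori bound on $\e v_x$ from the existence proof, and the $L^\infty$ bound on $u$ — none of these is deep, but the constant-chasing is the substantive part, together with justifying that $u(x,t)\in[u_-,u_+]$ so that $\max_{[u_-,u_+]}|f'|$ is the relevant quantity controlling the convection term.
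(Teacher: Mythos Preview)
Your proposal is correct and follows essentially the same approach as the paper's own proof: the energy identity for $w=u-v$, the mean-value representation $Q(\e u_x)-Q(\e v_x)=\e Q'(\xi)w_x$ combined with the a priori bounds on $\e u_x$ and $\e v_x$ to extract a positive lower bound on $Q'$, the estimate of the convection term via $|f(u)-f(v)|\le(\max|f'|)|w|$ and Poincar\'e, and the closing Gronwall step all match the paper's argument. Your remark that the inclusion $u(x,t)\in[u_-,u_+]$ deserves justification is well taken; the paper uses this implicitly when invoking $\sup_{[u_-,u_+]}|f'|$.
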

\begin{proof}
Let $u=u(x,t)$ be the solution to \eqref{NonLBurg} and $w(x,t)=u(x,t)-v(x)$; we have
\begin{equation*}
	w_t=Q(\e u_x)_x-f(u)_x-Q(\e v_x)_x+f(v)_x.
\end{equation*}
By multiplying the latter equation for $w$ and integrating in $(-\ell,\ell)$ we obtain
\begin{equation}\label{eq:w-mono}
	\frac{1}{2}\frac{d}{dt}\|w\|^2_{{}_{L^2}}=\int_{-\ell}^{\ell}\left(f(u)-f(v)\right)w_x\,dx+\int_{-\ell}^\ell \left(Q(\e v_x)-Q(\e u_x)\right)w_x\,dx,
\end{equation}
where we used integration by parts.
The assumptions \eqref{eq:Qmono} on the function $Q$ imply that
\begin{equation*}
	\int_{-\ell}^\ell \left(Q(\e v_x)-Q(\e u_x)\right)w_x\,dx=\e\int_{-\ell}^\ell Q'(\xi)(v_x-u_x)w_x\,dx=-\e\int_{-\ell}^\ell Q'(\xi)w_x^2\,dx,
\end{equation*}
where $\xi=\xi(x,t)$ depends on $u_x(x,t)$ and $v_x(x)$.
In order to obtain a lower bound on $Q'(\xi)$, let us estimate the quantities $\|u_x(\cdot,t)\|_{{}_{L^\infty}}$ and $\|v_x\|_{{}_{L^\infty}}$.
From \eqref{eq:Fi(c)} it follows that
\begin{equation*}
	\frac{2\ell}\e\leq \frac{u_+-u_-}{Q^{-1}(m+c)}, \qquad \quad m=\min_{u\in[u_-,u_+]} f(u),
\end{equation*}
which implies
\begin{equation*}
	c\leq Q\left(\frac{\e(u_+-u_-)}{2\ell}\right)-m:=\bar Q,
\end{equation*}
and, as a consequence, 
\begin{equation*}
	\e\|v_x\|_{{}_{L^\infty}}\leq Q^{-1}(M+\bar Q), \qquad \quad M=\max_{u\in[u_-,u_+]} f(u).
\end{equation*}
From Proposition \ref{prop:aprioriux-Q}, it follows that
\begin{equation*}
	\e\Vert u_x(\cdot,t)\Vert_{{}_{L^\infty}}\leq Q^{-1}(\beta),
\end{equation*}
where $\beta$ is defined in \eqref{eq:smallness-Q}.
Therefore, denoting by $C:=\max\{M+\bar Q,\beta\}$, we get
\begin{equation*}
	Q'(\xi(x,t))\geq\min_{|s|\leq Q^{-1}(C)} Q'(s)=:c_0>0.
\end{equation*}
Using this estimate in \eqref{eq:w-mono}, we deduce
\begin{equation*}
	\frac{1}{2}\frac{d}{dt}\|w\|^2_{{}_{L^2}}\leq\int_{-\ell}^{\ell}\left(f(u)-f(v)\right)w_x\,dx-\e\, c_0\|w_x\|^2_{{}_{L^2}}.
\end{equation*}
We estimate the first term on the right hand side by using H\"older and Poincar\'e inequalities: 
\begin{align}
	\left\vert \int_{-\ell}^{\ell}\left(f(u)-f(v)\right)w_x\,dx\right\vert &\leq \left(  \sup_{u \in [u_-, u_+]} |f'(u)| \right) \int_{-\ell}^\ell \vert ww_x \vert \, dx
	\leq \left(  \sup_{u \in [u_-, u_+]} |f'(u)| \right)\|w\|_{{}_{L^2}}\|w_x\|_{{}_{L^2}} \notag \\
	&\leq c_p\left( \sup_{u \in [u_-, u_+]} |f'(u)| \right)\|w_x\|^2_{{}_{L^2}}, \label{eq:stimaf}
\end{align}
where $c_p=(2\ell/\pi)^2$. Hence
\begin{equation*}
	\frac{1}{2}\frac{d}{dt}\|w\|^2_{{}_{L^2}}+\left(\e c_0-c_p\sup_{u \in [u_-, u_+]} |f'(u)| \right)\|w_x\|^2_{{}_{L^2}}\leq0,
\end{equation*}
and we end up with
\begin{equation*}
	\frac{1}{2}\frac{d}{dt}\|w\|^2_{{}_{L^2}}+c_p^{-1}\left(\e\,c_0-c_p\sup_{u \in [u_-, u_+]} |f'(u)| \right)\|w\|^2_{{}_{L^2}}\leq0.
\end{equation*}
Choosing $K_1<\e c_0/c_p$ in \eqref{eq:ipo-stab-mono}, we deduce 
\begin{equation*}
	\frac{1}{2}\frac{d}{dt}\|w\|^2_{{}_{L^2}}+K_2\|w\|^2_{{}_{L^2}}\leq0,
\end{equation*}
and by integrating the latter inequality we obtain the thesis.
\end{proof}
Let us discuss the role of the assumption \eqref{eq:ipo-stab-mono} in the proof of Theorem \ref{thm:stab-mono}.
First of all, notice that, in the case of a linear function $f$, i.e. $f(u)=cu$, 
the first integral of the right hand side in \eqref{eq:w-mono} is zero and then we do not need such assumption.
If $f$ is not explicitly given, in order to exploit $K_1$ we need an estimate on the term $Q(\e u_x)-Q(\e v_x)$, 
that can be explicitly computed once $Q$ is given. 
For instance, when $Q(s)=\frac{s}{\sqrt{1+s^2}}$, we have, for all $a,b >0$
$$Q(a)-Q(b)= \frac{a^2-b^2}{\sqrt{1+a^2}\sqrt{1+b^2} \left( a\sqrt{1+b^2}+b\sqrt{1+a^2}\right)} \geq \frac{ a-b}{C^3},$$
where $C=\max\{\sqrt{1+a^2}, \sqrt{1+b^2}\}$.
Hence, the assumption \eqref{eq:ipo-stab-mono} reads
\begin{equation*}
	\max_{u \in [u_-, u_+]} |f'(u)| \leq\frac{c\, \e}{\ell^2}
\end{equation*}
for some $c>0$ independent of $\e$.
Therefore, if the flux function $f$ is fixed, assumption \eqref{eq:ipo-stab-mono} can be seen as a restriction on the boundary data $u_\pm$,
while if $f$ and $u_\pm$ are fixed, then the steady state is asymptotically stable if $\ell$ and $\e$ are chosen properly. 

\subsection{Non monotone and bounded dissipation flux function}
We here study the stability properties of the steady states to \eqref{NonLBurg} in the case of $Q$ given by 
\begin{equation}\label{caso2}
Q(s)=\displaystyle\frac{s}{1+s^2}.
\end{equation}
As in the previous case, we can state the following proposition giving an estimate for the $L^\infty$ norm of $u_x$.
\begin{proposition}\label{prop:aprioriux-nonmono}
Fix $T>0$ and let $u(\cdot,t)\in C^3(I)$ be a classical solution of the IBVP \eqref{NonLBurg} for $t\in[0,T]$ with $Q$ as in \eqref{caso2}. If 
\begin{equation}\label{eq:smallness-nonmono}
	\|u'_0\|_{{}_{L^\infty}}<\e^{-1} \qquad \mbox{ and } \qquad \left\|\frac{\e u'_0}{1+(\e u'_0)^2}\right\|_{{}_{L^\infty}}+2\Vert f(u_0)\Vert_{{}_{L^\infty}}\leq\gamma<\frac{1}2,
\end{equation}
then 
\begin{equation*}
	\Vert u_x(\cdot,t)\Vert_{{}_{L^\infty}}\leq\frac{2\gamma \e^{-1}}{1+\sqrt{1-4\gamma^2}}<\e^{-1},
\end{equation*}
for any $t\in[0,T]$.
\end{proposition}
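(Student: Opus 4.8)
\medskip

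The plan is to adapt the proof of Proposition~\ref{prop:aprioriux-Q} to the non-monotone flux $Q(s)=s/(1+s^2)$. As there, I would introduce the auxiliary function
\[
	z(x,t):=Q\bigl(\e u_x(x,t)\bigr)-f\bigl(u(x,t)\bigr),
\]
so that the equation in \eqref{NonLBurg} reads $u_t=z_x$; differentiating $z$ in $t$ and using $u_{xt}=z_{xx}$ gives the linear equation
\[
	z_t=\e\,Q'(\e u_x)\,z_{xx}-f'(u)\,z_x .
\]
The genuinely new feature, compared with the monotone case, is that $Q'(s)=(1-s^2)/(1+s^2)^2$ changes sign at $s=\pm1$, so both the equation for $u$ and the equation for $z$ are parabolic only as long as $\e\|u_x(\cdot,t)\|_{{}_{L^\infty}}<1$. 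This is precisely the role of the first hypothesis in \eqref{eq:smallness-nonmono}: it guarantees that the problem starts — and, as the argument will show, stays — inside the parabolic regime.

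\smallskip
Next I would run a continuity/contradiction argument. Pick a constant $C_1$ with
\[
	\frac{2\gamma}{1+\sqrt{1-4\gamma^2}}<C_1<1,
\]
which is possible since $\gamma<\tfrac12$ forces $2\gamma/(1+\sqrt{1-4\gamma^2})<1$. Suppose there is a first time $t^*\in(0,T)$ with $\e\|u_x(\cdot,t)\|_{{}_{L^\infty}}<C_1$ for $t\in[0,t^*)$ and $\e\|u_x(\cdot,t^*)\|_{{}_{L^\infty}}=C_1$. Then on $[0,t^*]$ both equations remain uniformly parabolic and $f'(u)$ is bounded, so the maximum principle applied to $u$ and to $z$ yields
\[
	\|u(\cdot,t)\|_{{}_{L^\infty}}\le\|u_0\|_{{}_{L^\infty}},\qquad
	\|z(\cdot,t)\|_{{}_{L^\infty}}\le\|z(\cdot,0)\|_{{}_{L^\infty}}=\bigl\|Q(\e u_0')-f(u_0)\bigr\|_{{}_{L^\infty}}
\]
for all $t\in[0,t^*]$. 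Combining these bounds with the smallness assumption \eqref{eq:smallness-nonmono} gives
\[
	\bigl\|Q(\e u_x(\cdot,t))\bigr\|_{{}_{L^\infty}}
	\le\|z(\cdot,0)\|_{{}_{L^\infty}}+\|f(u(\cdot,t))\|_{{}_{L^\infty}}
	\le\|Q(\e u_0')\|_{{}_{L^\infty}}+2\|f(u_0)\|_{{}_{L^\infty}}\le\gamma<\tfrac12 .
\]

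\smallskip
Finally I would invert $Q$ along the correct branch. Since $\e u_x(x,t)\in(-1,1)$ on $[0,t^*]$ and $Q$ restricted to $(-1,1)$ is a strictly increasing bijection onto $(-\tfrac12,\tfrac12)$, solving $Q(y)=w$ with $|y|<1$ gives $y=2w/(1+\sqrt{1-4w^2})$; as $w\mapsto 2|w|/(1+\sqrt{1-4w^2})$ is increasing on $(-\tfrac12,\tfrac12)$, the bound $|Q(\e u_x)|\le\gamma$ forces
\[
	\e\|u_x(\cdot,t)\|_{{}_{L^\infty}}\le\frac{2\gamma}{1+\sqrt{1-4\gamma^2}}<C_1,\qquad t\in[0,t^*],
\]
contradicting $\e\|u_x(\cdot,t^*)\|_{{}_{L^\infty}}=C_1$. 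Hence no such $t^*$ exists and the stated estimate holds on all of $[0,T]$, the strict inequality $2\gamma\e^{-1}/(1+\sqrt{1-4\gamma^2})<\e^{-1}$ being once more just $\gamma<\tfrac12$. The only delicate point is the sign change of $Q'$: one has to set up the continuity argument so that the parabolic regime is never left before the contradiction is reached, and one must discard the spurious root $y=(1+\sqrt{1-4w^2})/(2w)$, which corresponds to $|y|>1$ and is irrelevant here; everything else (the maximum principle, the boundedness of $f'(u)$) is routine and parallels Proposition~\ref{prop:aprioriux-Q}.
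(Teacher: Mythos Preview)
Your proposal is correct and follows essentially the same route as the paper: define $z=Q(\e u_x)-f(u)$, derive the linear parabolic equation for $z$, run the continuity/contradiction argument to keep $\e|u_x|<1$, apply the maximum principle to $u$ and $z$, and then invert $Q$ on the branch $(-1,1)$. Your write-up is in fact slightly more explicit than the paper's about why the first hypothesis in \eqref{eq:smallness-nonmono} is needed and about discarding the spurious root when inverting $Q$, but the underlying argument is identical.
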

\begin{remark}
{\rm
As  in Remark \ref{rem:ipoex-stab-mono}, we notice that assumptions \eqref{eq:smallness-nonmono} imply
\begin{equation*}
	M-m\leq2\Vert f(u_0)\Vert_{{}_{L^\infty}}<\frac12, \qquad \mbox{ and } \qquad   \left\|\frac{\e u'_0}{1+(\e u'_0)^2}\right\|_{{}_{L^\infty}}\leq\frac12-(M-m).
\end{equation*}
In particular, using $\|u'_0\|_{{}_{L^\infty}}<\e^{-1}$, the second estimate becomes
\begin{equation*}
	\|u'_0\|_{{}_{L^\infty}}\leq\frac{1-2(M-m)}{1+2\sqrt{(M-m)(1-(M-m))}}.
\end{equation*}
Assuming, for definiteness, that $u_->u_+$ and applying the latter estimate to
\begin{equation*}
	\frac{\e(u_--u_+)}{2\ell}\leq\|u'_0\|_{{}_{L^\infty}},
\end{equation*}
we obtain
\begin{equation*}
	\frac{2\ell}\e\geq\frac{1+2\sqrt{(M-m)(1-(M-m))}}{1-2(M-m)}(u_--u_+)\geq B,
\end{equation*}
where we used \eqref{eq:B-est}.
Therefore, assumptions \eqref{eq:smallness-nonmono} imply condition \eqref{eq:condexnonmono},
which guarantees the existence of a decreasing (smooth) steady state $v$ satisfying $\|v_x\|_{{}_{L^\infty}}<\e^{-1}$.
}
\end{remark}

\begin{proof}[Proof of Proposition \ref{prop:aprioriux-nonmono}]
We proceed in the same way as Proposition \ref{prop:aprioriux-Q}; we define the function
\begin{equation}\label{eq:z-nonmono}
	z(x,t):=\frac{\e u_x(x,t)}{1+\e^2u_x(x,t)^2}-f(u(x,t)),
\end{equation}
where $u$ is the classical solution of \eqref{NonLBurg} with $Q$ as in \eqref{caso2},
and we rewrite the equation for $u$ in the form
\begin{equation*}
 	u_t=z_x.
\end{equation*}
Differentiating equation \eqref{eq:z-nonmono} with respect to $t$, we deduce
\begin{equation}\label{eq:z_t-nonmono}
	z_t=\e\frac{1-\e^2u_x^2}{(1+\e^2u_x^2)^2}z_{xx}-f'(u)z_x.
\end{equation}
Reasoning as in the proof of Proposition \ref{prop:aprioriux-Q} and using \eqref{eq:smallness-nonmono} we can prove
that the equations in \eqref{NonLBurg} and \eqref{eq:z_t-nonmono} remain parabolic for $t\in[0,T]$;
by the maximum principle we thus have
\begin{equation}\label{aprioriu-nonmono} 
	\Vert u(\cdot,t)\Vert_{{}_{L^\infty}}\leq\Vert u_0\Vert_{{}_{L^\infty}},
\end{equation}
for any $t\in[0,T]$; moreover, the term $f'(u)$ is uniformly bounded and
\begin{equation*}
	\Vert z(\cdot,t)\Vert_{{}_{L^\infty}}\leq\Vert z(\cdot,0)\Vert_{{}_{L^\infty}}=\left\Vert \frac{\e u'_0}{1+(\e u'_0)^2}-f(u_0)\right\Vert_{{}_{L^\infty}}
\end{equation*}
holds for any $t\in[0,T]$, implying
\begin{equation*}
	\left\Vert \frac{\e u_x(\cdot,t)}{1+\e^2 u_x(\cdot,t)^2}\right\Vert_{{}_{L^\infty}}\leq\gamma<\frac{1}{2},
\end{equation*}
for any $t\in[0,T]$. Therefore, we can conclude 
\begin{equation*}
	\e\|u_x(\cdot,t)\|_{{}_{L^\infty}}\leq\frac{2\gamma}{1+\sqrt{1-4\gamma^2}}<2\gamma<1,
\end{equation*}
for any $t\in[0,T]$.
\end{proof}
In the next theorem we prove that the steady state $v$, solution to \eqref{eq:staz-nonmono-BV}, is asymptotically stable.
\begin{theorem}\label{thm:stab-nonmono}
Let $u$ be a classical solution to \eqref{NonLBurg}, with dissipative flux function $Q$ as in \eqref{caso2}, 
$u_->u_+$ and initial datum $u_0 \in C^3(I)$ satisfying \eqref{eq:smallness-nonmono}.
Then, there exists a positive constant $\Gamma_1$, (depending on $u_0$ and that can be explicitly computed) 
such that, if
\begin{equation}\label{eq:ipo-stab-nonmono}
	\max_{u \in [u_+, u_-]} |f'(u)| \leq \frac{\pi^2\e}{4\ell^2}  \, \Gamma_1,
\end{equation}
then 
\begin{equation*}
	\Vert u(\cdot,t)- v\Vert_{{}_{L^2}} \leq e^{-\Gamma_2t} \Vert u_0- v\Vert_{{}_{L^2}},
\end{equation*}
for some $\Gamma_2>0$.
\end{theorem}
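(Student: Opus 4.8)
The plan is to adapt the energy argument from the proof of Theorem~\ref{thm:stab-mono}. Set $w(x,t):=u(x,t)-v(x)$, where $v$ is the steady state on the branch $-\e^{-1}<v_x<0$ produced by Proposition~\ref{prop:ex-nonmono} (its existence is guaranteed because, by the Remark following Proposition~\ref{prop:aprioriux-nonmono}, assumption~\eqref{eq:smallness-nonmono} implies~\eqref{eq:condexnonmono}). Since $u$ and $v$ share the boundary data $u_\pm$, the function $w$ vanishes at $x=\pm\ell$, and subtracting the stationary equation from~\eqref{NonLBurg} gives $w_t=\big(Q(\e u_x)-Q(\e v_x)\big)_x-\big(f(u)-f(v)\big)_x$ with $Q(s)=s/(1+s^2)$. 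Multiplying by $w$, integrating over $I=(-\ell,\ell)$ and integrating by parts (the boundary terms vanish), one obtains
\[
\frac12\frac{d}{dt}\|w\|_{{}_{L^2}}^2=\int_{-\ell}^{\ell}\big(f(u)-f(v)\big)w_x\,dx+\int_{-\ell}^{\ell}\big(Q(\e v_x)-Q(\e u_x)\big)w_x\,dx ,
\]
exactly as in~\eqref{eq:w-mono}, and the goal is to show that, under~\eqref{eq:ipo-stab-nonmono}, the right-hand side is bounded above by $-\Gamma_2\|w\|_{{}_{L^2}}^2$.

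The only genuine difference with respect to Theorem~\ref{thm:stab-mono} lies in the dissipative term, since here $Q'(s)=(1-s^2)/(1+s^2)^2$ changes sign and the mean value argument of that proof cannot be used directly. Instead I would exploit the elementary identity
\[
Q(a)-Q(b)=(a-b)\,\frac{1-ab}{(1+a^2)(1+b^2)},\qquad a,b\in\R ,
\]
with $a=\e u_x$ and $b=\e v_x$, which gives $\big(Q(\e v_x)-Q(\e u_x)\big)w_x=-\e\,w_x^2\,\dfrac{1-(\e u_x)(\e v_x)}{(1+(\e u_x)^2)(1+(\e v_x)^2)}$. By Proposition~\ref{prop:aprioriux-nonmono}, $\e\|u_x(\cdot,t)\|_{{}_{L^\infty}}\le 2\gamma<1$ for all $t$, while $v_x$ is continuous on the compact interval $[-\ell,\ell]$ and satisfies $-\e^{-1}<v_x<0$, so $\e\|v_x\|_{{}_{L^\infty}}<1$; hence $|(\e u_x)(\e v_x)|\le 2\gamma<1$ everywhere and
\[
\frac{1-(\e u_x)(\e v_x)}{(1+(\e u_x)^2)(1+(\e v_x)^2)}\ \ge\ \frac{1-2\gamma}{4}=:c_0>0 .
\]
Therefore the second integral is $\le-\e\,c_0\|w_x\|_{{}_{L^2}}^2$. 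Note that, in contrast with Theorem~\ref{thm:stab-mono}, no quantitative upper bound on $\|v_x\|_{{}_{L^\infty}}$ (and hence no estimate of the integration constant) is needed: the dangerous factor $1-(\e u_x)(\e v_x)$ is kept away from $0$ using only the \emph{a priori} bound on $u_x$.

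For the convective term I would argue exactly as in~\eqref{eq:stimaf}, by H\"older's and Poincar\'e's inequalities, obtaining $\big|\int_{-\ell}^{\ell}(f(u)-f(v))w_x\,dx\big|\le c_p\big(\max_{u\in[u_+,u_-]}|f'(u)|\big)\|w_x\|_{{}_{L^2}}^2$ with $c_p=(2\ell/\pi)^2$. Combining the two bounds,
\[
\frac12\frac{d}{dt}\|w\|_{{}_{L^2}}^2+\Big(\e\,c_0-c_p\max_{u\in[u_+,u_-]}|f'(u)|\Big)\|w_x\|_{{}_{L^2}}^2\le 0 ,
\]
and since~\eqref{eq:ipo-stab-nonmono} is precisely $\max_{u\in[u_+,u_-]}|f'(u)|\le c_p^{-1}\e\,\Gamma_1$, choosing $\Gamma_1<c_0$ makes the coefficient strictly positive; a further use of Poincar\'e's inequality turns this into $\tfrac12\tfrac{d}{dt}\|w\|_{{}_{L^2}}^2+\Gamma_2\|w\|_{{}_{L^2}}^2\le 0$ for some $\Gamma_2>0$, and Gronwall's lemma gives the asserted exponential decay. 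I expect the only real obstacle to be the step singled out above — producing a uniform positive lower bound for the dissipative quadratic form despite the sign change of $Q'$ — which the algebraic identity handles cleanly; the rest is a routine transcription of the proof of Theorem~\ref{thm:stab-mono}.
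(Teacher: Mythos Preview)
Your proof is correct and follows essentially the same route as the paper: the same energy identity for $w=u-v$, the same algebraic identity $Q(a)-Q(b)=(a-b)(1-ab)/\big((1+a^2)(1+b^2)\big)$ to extract a coercive term, and the same H\"older--Poincar\'e estimate on the convective part. The only cosmetic difference is that the paper records the lower bound on the dissipative factor as $\Gamma_0=\dfrac{1-\e^2\|u_x\|_{{}_{L^\infty}}\|v_x\|_{{}_{L^\infty}}}{(1+\e^2\|u_x\|^2_{{}_{L^\infty}})(1+\e^2\|v_x\|^2_{{}_{L^\infty}})}$ and then takes $\Gamma_1<\Gamma_0$, whereas you substitute the cruder but explicit bounds $\e\|u_x\|_{{}_{L^\infty}}\le 2\gamma$ and $\e\|v_x\|_{{}_{L^\infty}}<1$ to arrive at $c_0=(1-2\gamma)/4$; both are valid and yield the same conclusion.
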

\begin{proof}
Reasoning as in the proof of Theorem \ref{thm:stab-mono}, we find out that $w(x,t)=u(x,t)-v(x)$ satisfies
\begin{equation*}
	\frac{1}{2}\frac{d}{dt}\|w\|^2_{{}_{L^2}}=\int_{-\ell}^{\ell}\left(f(u)-f(v)\right)w_x\,dx+\int_{-\ell}^\ell \left(\frac{\e v_x}{1+\e^2 v^2_x}-\frac{\e u_x}{1+\e^2u^2_x}\right)w_x\,dx.
\end{equation*}
In this case, we use that for any $a,b\in\R$
\begin{equation*}
	\frac{a}{1+a^2}-\frac{b}{1+b^2}=\frac{(a-b)(1-ab)}{(1+a^2)(1+b^2)},
\end{equation*}
and we infer
\begin{equation*}
	\frac{1}{2}\frac{d}{dt}\|w\|^2_{{}_{L^2}}=\int_{-\ell}^{\ell}\left(f(u)-f(v)\right)w_x\,dx-\e\int_{-\ell}^\ell \left(\frac{1-\e^2v_xu_x}{(1+\e^2v_x^2)(1+\e^2u_x^2)}\right)w^2_x\,dx.
\end{equation*}
Hence, from \eqref{eq:stimaf} and Propositions \ref{prop:ex-nonmono}-\ref{prop:aprioriux-nonmono} one has
\begin{equation*}
	\frac{1}{2}\frac{d}{dt}\|w\|^2_{{}_{L^2}}\leq c_p\left( \sup_{u \in [u_-, u_+]} |f'(u)| \right)\|w_x\|^2_{{}_{L^2}}-\e\Gamma_0\|w_x\|^2_{{}_{L^2}},
\end{equation*}
where $c_p=(2\ell/\pi)^2$ and 
$$\Gamma_0=\frac{1-\e^2\|u_x(\cdot,t)\|_{{}_{L^\infty}}\|v_x\|_{{}_{L^\infty}}}{(1+\e^2\|u_x(\cdot,t)\|^2_{{}_{L^\infty}})(1+\e^2\|v_x\|^2_{{}_{L^\infty}})}>0.$$
Choosing $\Gamma_1<\Gamma_0$ in \eqref{eq:ipo-stab-nonmono}, we deduce 
\begin{equation*}
	\frac{1}{2}\frac{d}{dt}\|w\|^2_{{}_{L^2}}+\Gamma_2\|w\|^2_{{}_{L^2}}\leq0,
\end{equation*}
and by integrating the latter inequality we obtain the thesis.
\end{proof}

\begin{remark}
{\rm
Let us stress that the constants $\Gamma_0,\Gamma_1$ can be chosen independently on $\e$.
Indeed, from Proposition \ref{prop:ex-nonmono} it follows that $\|v_x\|_{{}_{L^\infty}}<\e^{-1}$, 
while by choosing $\gamma=1/4$ in \eqref{eq:smallness-nonmono}, we have $\|u_x(\cdot,t)\|_{{}_{L^\infty}}\leq1/\e(\sqrt3+2)$.
As a consequence, once $u_\pm$ and $f$ are fixed, it is possible to chose $\e$ so that condition \eqref{eq:ipo-stab-nonmono} holds. 
We also notice that, if $\e$ is very small, condition \eqref{eq:ipo-stab-nonmono} is very restrictive since, once $f$ is fixed, 
it gives stability only for {\it small} solutions (i.e. solutions connecting boundary data which are small with respect to $\e$). 
}
\end{remark}

\subsection{Monotone and unbounded dissipation flux function}
We finally study the stability of the monotone steady states of the initial boundary value problem \eqref{NonLBurg}, with dissipative flux function 
\begin{equation}\label{caso3}
	Q(s)=\dfrac{s}{\sqrt{1-s^2}}.
\end{equation}
As in the previous subsections, the first step is to establish an estimate for the $L^\infty$--norm of the space derivative of the solution $u$ to \eqref{NonLBurg}.
\begin{proposition}\label{prop:aprioriux-unbound}
Fix $T>0$ and let $u(\cdot,t)\in C^3(I)$ be a classical solution of the IBVP \eqref{NonLBurg} for $t\in[0,T]$ with $Q$ given by \eqref{caso3}.
If 
\begin{equation}\label{eq:smallness-unbound}
	\|u'_0\|_{{}_{L^\infty}}<\e^{-1},
\end{equation}
then 
\begin{equation*}
	\Vert u(\cdot,t)\Vert_{{}_{L^\infty}}\leq\Vert u_0\Vert_{{}_{L^\infty}}, \qquad \mbox{ and } \qquad \Vert u_x(\cdot,t)\Vert_{{}_{L^\infty}}<\e^{-1},
\end{equation*}
for any $t\in[0,T]$.
\end{proposition}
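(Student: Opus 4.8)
The plan is to follow exactly the strategy of Propositions \ref{prop:aprioriux-Q} and \ref{prop:aprioriux-nonmono}, but now exploiting the crucial structural feature of the Minkowski operator: $Q'(s) = (1-s^2)^{-3/2} \geq 1$ for all $s \in (-1,1)$, so no smallness condition on $f(u_0)$ is needed. First I would introduce the auxiliary function
\begin{equation*}
	z(x,t) := \frac{\e u_x(x,t)}{\sqrt{1-\e^2 u_x(x,t)^2}} - f(u(x,t)),
\end{equation*}
which is well defined as long as $\e\|u_x(\cdot,t)\|_{{}_{L^\infty}} < 1$; the equation in \eqref{NonLBurg} then reads $u_t = z_x$, and differentiating $z$ in $t$ gives a linear parabolic equation of the form $z_t = \e Q'(\e u_x) z_{xx} - f'(u) z_x$, parabolic precisely on the set where $\e|u_x| < 1$.

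Next I would run the standard continuation/contradiction argument. By \eqref{eq:smallness-unbound} the solution starts with $\e\|u'_0\|_{{}_{L^\infty}} < 1$; suppose $t^* \in (0,T]$ is the first time at which $\e\|u_x(\cdot,t^*)\|_{{}_{L^\infty}} = 1$ (if no such time exists we are done on $[0,T]$). On $[0,t^*)$ both the equation for $u$ and the equation for $z$ are uniformly parabolic, so the maximum principle applies: it gives $\|u(\cdot,t)\|_{{}_{L^\infty}} \leq \|u_0\|_{{}_{L^\infty}}$ and, since $f$ is continuous and $u$ is thereby confined to a fixed compact set, $f(u)$ is uniformly bounded; combined with $\|z(\cdot,t)\|_{{}_{L^\infty}} \leq \|z(\cdot,0)\|_{{}_{L^\infty}}$ this yields a uniform bound
\begin{equation*}
	\left\|\frac{\e u_x(\cdot,t)}{\sqrt{1-\e^2 u_x(\cdot,t)^2}}\right\|_{{}_{L^\infty}} \leq \|z(\cdot,0)\|_{{}_{L^\infty}} + \max_{|s|\leq\|u_0\|_{{}_{L^\infty}}}|f(s)| =: R < \infty,
\end{equation*}
valid for all $t \in [0,t^*)$. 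Because $s \mapsto s/\sqrt{1-s^2}$ is an increasing bijection from $(-1,1)$ onto $\R$, this forces $\e|u_x(\cdot,t)| \leq R/\sqrt{1+R^2} < 1$ uniformly on $[0,t^*)$, and by continuity the same strict bound holds at $t = t^*$, contradicting $\e\|u_x(\cdot,t^*)\|_{{}_{L^\infty}} = 1$. Hence $\e\|u_x(\cdot,t)\|_{{}_{L^\infty}} < 1$ for all $t \in [0,T]$, and the estimate $\|u(\cdot,t)\|_{{}_{L^\infty}} \leq \|u_0\|_{{}_{L^\infty}}$ then holds on all of $[0,T]$ by the same maximum-principle argument now that parabolicity is established throughout.

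The main obstacle, as in the earlier propositions, is the justification that the maximum principle is legitimately applicable: one must argue that on the closed interval $[0,t^*]$ the diffusion coefficient $\e Q'(\e u_x) = \e(1-\e^2u_x^2)^{-3/2}$ stays bounded and bounded away from zero, so that \eqref{NonLBurg} and the $z$-equation are genuinely uniformly parabolic and the comparison principle (with the constant barriers $\pm\|u_0\|_{{}_{L^\infty}}$ for $u$ and $\pm\|z(\cdot,0)\|_{{}_{L^\infty}}$ for $z$, using the Dirichlet data and the compatibility $z_x = u_t$) is valid for the $C^3$ solution. This is where the definition of $t^*$ as the \emph{first} exit time is essential: on $[0,t^*)$ we have the open condition $\e|u_x| < 1$, hence a uniform bound on the coefficient, which is exactly what the maximum principle requires, and the argument closes by continuity at $t^*$. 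Everything else is a routine repetition of the computations already carried out for \eqref{MC} in \cite{FGS} and for \eqref{caso2} above, with the simplification that the inequality $Q'(s) \geq 1$ removes any need to control $\|f(u_0)\|_{{}_{L^\infty}}$.
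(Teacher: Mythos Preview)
Your proposal is correct and follows essentially the same route as the paper: introduce the auxiliary function $z=Q(\e u_x)-f(u)$, derive the linear parabolic equation $z_t=\e(1-\e^2u_x^2)^{-3/2}z_{xx}-f'(u)z_x$, run the first-exit-time contradiction argument (which the paper invokes by reference to Propositions \ref{prop:aprioriux-Q} and \ref{prop:aprioriux-nonmono}), and invert $Q$ to conclude $\e\|u_x\|_{{}_{L^\infty}}\leq R/\sqrt{1+R^2}<1$. The only cosmetic difference is that the paper packages the constant as $\delta:=\|Q(\e u'_0)\|_{{}_{L^\infty}}+2\|f(u_0)\|_{{}_{L^\infty}}$ rather than your $R$, and your explanation of why no smallness condition on $f(u_0)$ is needed would be more accurately attributed to the fact that $Q$ maps $(-1,1)$ onto all of $\R$ (so $Q^{-1}$ is globally defined) rather than to $Q'\geq1$.
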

\begin{proof}
The function $z$, defined as
$$
	z(x,t):=\frac{\e u_x(x,t)}{\sqrt{1-\e^2 u_x(x,t)^2}}-f(u(x,t)), 
$$
satisfies the equation
\begin{equation*}
	z_t=\frac{\e z_{xx}}{\left(1-\e^2u_x^2\right)^{3/2}}-f'(u)z_x.
\end{equation*}
Set 
\begin{equation*}
	\left\|\frac{\e u'_0}{\sqrt{1-(\e u'_0)^2}}\right\|_{{}_{L^\infty}}+2\Vert f(u_0)\Vert_{{}_{L^\infty}}=\delta,
\end{equation*}
we have $\delta<+\infty$ (from \eqref{eq:smallness-unbound} and since $f\in C^2(\R)$).
Reasoning as in the proof of Propositions \ref{prop:aprioriux-Q} and \ref{prop:aprioriux-nonmono}, and by using the maximum principle we obtain
\begin{equation*}
	\Vert u(\cdot,t)\Vert_{{}_{L^\infty}}\leq\Vert u_0\Vert_{{}_{L^\infty}}, \qquad \mbox{ and } \qquad 
	\Vert z(\cdot,t)\Vert_{{}_{L^\infty}}\leq\Vert z(\cdot,0)\Vert_{{}_{L^\infty}},
\end{equation*}
for any $t\in[0,T]$.
In particular, we get 
\begin{equation*}
	\left\|\frac{\e u_x(\cdot,t)}{\sqrt{1-\e^2 u_x(\cdot,t)^2}}\right\|_{{}_{L^\infty}}\leq\Vert z(\cdot,0)\Vert_{{}_{L^\infty}}+\Vert z(\cdot,0)\Vert_{{}_{L^\infty}}\leq\delta,
\end{equation*}
and, as a trivial consequence
\begin{equation*}
	\e\|u_x(\cdot,t)\|_{{}_{L^\infty}}\leq\frac{\delta}{\sqrt{1+\delta^2}}<1.
\end{equation*}
The proof is complete.
\end{proof}

Let us stress that condition \eqref{eq:smallness-unbound} implies $2\ell>\e|u_+-u_-|$.
Thus, in this framework, the existence of smooth monotone steady states is guaranteed.
Before studying the stability of such steady states, we prove that,
when the assumptions of Proposition \ref{prop:aprioriux-unbound} are satisfied, the solution to the IBVP preserves the monotonicity of the initial datum.
\begin{proposition}\label{prop:crescente}
Let $u=u(x,t)$ be a classical solution of \eqref{NonLBurg}, 
with $Q$ as in \eqref{caso3} and with monotone increasing (decreasing) initial datum $u_0 \in C^3(I)$ satisfying \eqref{eq:smallness-unbound}. 
Then, for every $t>0$, $u(\cdot, t)$ is monotone increasing (decreasing). 
\end{proposition}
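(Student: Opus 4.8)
The plan is to differentiate the equation with respect to $x$ and apply a parabolic maximum principle to $p := u_x$. Suppose $u_0$ is increasing, so $p(x,0) = u_0'(x) \geq 0$ on $I$; we want $p(x,t) \geq 0$ for all $t > 0$. Writing the equation \eqref{NonLBurg} with $Q(s) = s/\sqrt{1-s^2}$ in the form $u_t = Q(\e u_x)_x - f(u)_x = \e Q'(\e u_x) u_{xx} - f'(u) u_x$ and differentiating in $x$, one finds that $p$ solves a linear (in $p$, with coefficients depending on $u$) parabolic equation of the form
\begin{equation*}
	p_t = \e \bigl( Q'(\e u_x) p_x \bigr)_x - \bigl( f'(u) p \bigr)_x,
\end{equation*}
i.e. $p_t = \e Q'(\e p) p_{xx} + \e^2 Q''(\e u_x) p_x^2 \cdot \tfrac{1}{\e}\cdots - f''(u) p^2 - f'(u) p_x$ after expanding; the precise expansion is routine, the essential point being that it is a second-order equation whose leading coefficient is $\e Q'(\e u_x)$. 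By Proposition \ref{prop:aprioriux-unbound}, under hypothesis \eqref{eq:smallness-unbound} we have $\e \|u_x(\cdot,t)\|_{L^\infty} < 1$ for all $t \in [0,T]$, so $1 - \e^2 u_x^2$ stays bounded away from zero on $[0,T]$ and hence $\e Q'(\e u_x) = \e (1-\e^2 u_x^2)^{-3/2}$ is bounded above and below by positive constants: the equation for $p$ is uniformly parabolic on every time slab $[0,T]$.

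Next I would address the boundary behavior. This is the delicate point, since a bare maximum principle on $I \times [0,T]$ only controls the sign of $p$ if $p \geq 0$ on the parabolic boundary, and the Dirichlet conditions $u(\pm\ell,t) = u_\pm$ do \emph{not} directly give a sign condition on $u_x(\pm\ell,t)$. The standard device is a Hopf-lemma / boundary-point argument: one shows that $u_x$ cannot attain a negative minimum at $x = \pm\ell$. Concretely, if $p$ were to become negative, let $t_0$ be the first time and $x_0$ the point where $p(x_0,t_0) = 0$ with $p \geq 0$ before; if $x_0$ is interior, the usual strong maximum principle for the parabolic operator above forces $p \equiv 0$ on $I \times [0,t_0]$, contradicting $u_0$ nonconstant (or, if $u_0' \equiv 0$ the statement is trivial); if $x_0 = \pm\ell$, Hopf's lemma applied to $p$ gives a strict sign for the outward normal derivative of $p$ there, which one shows is incompatible with $p$ having touched zero from above there. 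Hence $p(x,t) > 0$ for $t > 0$ (strict, by the strong maximum principle), which is the claim. The decreasing case is identical with $p \leq 0$.

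The main obstacle I anticipate is precisely the boundary analysis: making the Hopf-lemma argument rigorous requires that $u$ be smooth enough up to $x = \pm\ell$ (covered by the hypothesis $u_0 \in C^3(I)$ and classical parabolic regularity) and a careful check that the lower-order coefficient $f'(u)$, which multiplies $p_x$ and has no definite sign, does not spoil the maximum principle — it does not, because first-order terms are always harmless for the weak and strong maximum principles and for Hopf's lemma. A cleaner alternative, avoiding the boundary subtlety altogether, is to argue on the whole-line problem by even/odd or by a comparison/approximation scheme, or simply to invoke that the map $u_0 \mapsto u(\cdot,t)$ is order-preserving (a comparison principle for the quasilinear equation, which holds because the equation is parabolic under the a priori bound of Proposition \ref{prop:aprioriux-unbound}) together with the translation invariance of \eqref{eq:allR}: for $h > 0$ small, $u_0(\cdot + h) \geq u_0(\cdot)$ on a slightly smaller interval and matching data, so comparison gives $u(x+h,t) \geq u(x,t)$, and letting $h \to 0^+$ yields $u_x \geq 0$. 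I would present the Hopf-lemma version as the main line and mention the comparison argument as a remark.
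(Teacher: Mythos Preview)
Your overall strategy --- differentiate in $x$ and apply a comparison/maximum principle to $y=u_x$, using Proposition~\ref{prop:aprioriux-unbound} to ensure uniform parabolicity --- is exactly the paper's approach. The difference lies in the boundary analysis, and here you have made things much harder than necessary.

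You write that the Dirichlet conditions $u(\pm\ell,t)=u_\pm$ ``do \emph{not} directly give a sign condition on $u_x(\pm\ell,t)$'' and then propose a Hopf-lemma argument on $p=u_x$. But in fact they do, once combined with the maximum principle for $u$ itself. Since $u_0$ is increasing, $u_-\le u_0\le u_+$; the parabolic maximum principle (the same one invoked in Proposition~\ref{prop:aprioriux-unbound}) then gives $u_-\le u(x,t)\le u_+$ for all $(x,t)$. Thus $u(-\ell,t)=u_-$ is the global minimum of $u(\cdot,t)$ and $u(\ell,t)=u_+$ is the global maximum, which immediately forces $u_x(-\ell,t)\ge0$ and $u_x(\ell,t)\ge0$: if, say, $u_x(-\ell,t)<0$, then $u$ would dip below $u_-$ just to the right of $-\ell$. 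This one-line observation supplies the lateral boundary sign for $y$, and then a standard comparison principle (the paper cites \cite[Theorem~9.7]{Lie}) between the solutions $y\equiv0$ and $y=u_x$ of the derived equation finishes the proof.

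By contrast, your Hopf argument on $p$ is incomplete as written: Hopf applied to $p$ at $x=\pm\ell$ gives a sign for $p_x=u_{xx}$ there, and you assert this is ``incompatible with $p$ having touched zero from above'' without saying why. (It \emph{can} be made to work --- differentiating $u(\pm\ell,t)=u_\pm$ in $t$ and using the PDE relates $p_x$ to $p$ at the boundary --- but you haven't done this.) Your translation-comparison alternative also has a loose end at the boundary, as you yourself note. Neither is wrong in spirit, but both are more elaborate than the one-line argument above, which is what the paper uses.
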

\begin{proof}
Consider the case of an increasing initial datum $u_0$.
Proposition \ref{prop:aprioriux-unbound} implies that
\begin{equation}\label{aprioriu-monotone}
	u_-\leq u(x,t)\leq u_+, \qquad \qquad \forall\, x\in [-\ell,\ell], \, t\geq0.
\end{equation}
By differentiating with respect to $x$ the equation for $u$, we obtain that $y=u_x$ solves
\begin{equation}
	y_t = \frac{\e y_{xx}}{(1-\e^2y^2)^{3/2}}+\frac{3\e^3y y_x^2}{(1+\e^2y^2)^{5/2}}-f'(u)y_x-f''(u)y^2.
\end{equation}
The latter equation is parabolic and both $y=0$ and $y=u_x$ are solutions; 
since $y(x,0)\geq0$ by assumption and $y(\pm\ell, t)=u_x(\pm \ell, t) \geq 0$ for any $t\geq0$ (otherwise \eqref{aprioriu-monotone} would be violated),
from the comparison principle \cite[Theorem 9.7]{Lie}, it follows that $y(x, t)\geq0$ for every $x\in[-\ell, \ell]$ and $t > 0$, namely $u(\cdot, t)$ is increasing for all $t>0$. 
The case of a decreasing initial datum $u_0$ can be treated similarly. 
\end{proof} 

We now have all the tools to prove the stability of the increasing steady state $v$, solution to \eqref{eq:staz-unbound-BV}.
\begin{theorem}\label{thm:stab-unbound}
Let $u$ be a classical solution to \eqref{NonLBurg}, with dissipative flux function $Q$ as in \eqref{caso3}, 
$u_+>u_-$ and a monotone increasing initial datum $u_0 \in C^3(I)$ satisfying \eqref{eq:smallness-unbound}.
If
\begin{equation}\label{eq:ipo-stab-unbound}
	\max_{u \in [u_+, u_-]} |f'(u)|<\frac{\pi^2\e}{4\ell^2},
\end{equation}
then 
\begin{equation*}
	\Vert u(\cdot,t)- v\Vert_{{}_{L^2}} \leq e^{-\Lambda t} \Vert u_0- v\Vert_{{}_{L^2}},
\end{equation*}
for some $\Lambda>0$.
\end{theorem}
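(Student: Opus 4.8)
The plan is to mirror the energy argument used for Theorems \ref{thm:stab-mono} and \ref{thm:stab-nonmono}, the key simplification here being that for $Q(s)=s/\sqrt{1-s^2}$ one has $Q'(s)=(1-s^2)^{-3/2}\geq1$ on the whole of $(-1,1)$, so that the diffusion produces a coercive term whose constant does not degrade with the size of $u_0$. I would set $w(x,t):=u(x,t)-v(x)$, where $v$ is the increasing steady state of Proposition \ref{prop:ex-unbound}; since $u$ and $v$ satisfy the same Dirichlet conditions, $w(\pm\ell,t)=0$. Multiplying the equation satisfied by $w$ by $w$, integrating over $(-\ell,\ell)$ and integrating by parts gives
\begin{equation*}
	\frac12\frac{d}{dt}\Vert w\Vert_{{}_{L^2}}^2=\int_{-\ell}^{\ell}\bigl(f(u)-f(v)\bigr)w_x\,dx+\int_{-\ell}^{\ell}\bigl(Q(\e v_x)-Q(\e u_x)\bigr)w_x\,dx.
\end{equation*}

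For the diffusion integral I would write $Q(\e v_x)-Q(\e u_x)=\e\,Q'(\xi)(v_x-u_x)$ with $\xi=\xi(x,t)$ lying between $\e u_x(x,t)$ and $\e v_x(x)$. Proposition \ref{prop:aprioriux-unbound} shows that \eqref{eq:smallness-unbound} forces $\e\Vert u_x(\cdot,t)\Vert_{{}_{L^\infty}}<1$ for all $t\in[0,T]$, while \eqref{staz-unbound} gives $\e v_x=(f(v)+c)/\sqrt{1+(f(v)+c)^2}$, whence $\e\Vert v_x\Vert_{{}_{L^\infty}}<1$; therefore $|\xi|<1$ and $Q'(\xi)\geq1$. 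Since $v_x-u_x=-w_x$, this yields
\begin{equation*}
	\int_{-\ell}^{\ell}\bigl(Q(\e v_x)-Q(\e u_x)\bigr)w_x\,dx=-\e\int_{-\ell}^{\ell}Q'(\xi)w_x^2\,dx\leq-\e\Vert w_x\Vert_{{}_{L^2}}^2.
\end{equation*}

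For the convection integral I would use Proposition \ref{prop:crescente}, which (the initial datum being monotone increasing) gives $u(x,t)\in[u_-,u_+]$ for every $x$ and $t$, and the same holds for $v$; hence, exactly as in \eqref{eq:stimaf}, Hölder's and Poincaré's inequalities give
\begin{equation*}
	\left|\int_{-\ell}^{\ell}\bigl(f(u)-f(v)\bigr)w_x\,dx\right|\leq c_p\Bigl(\max_{u\in[u_-,u_+]}|f'(u)|\Bigr)\Vert w_x\Vert_{{}_{L^2}}^2,\qquad c_p=(2\ell/\pi)^2.
\end{equation*}
Combining the two bounds,
\begin{equation*}
	\frac12\frac{d}{dt}\Vert w\Vert_{{}_{L^2}}^2+\Bigl(\e-c_p\max_{u\in[u_-,u_+]}|f'(u)|\Bigr)\Vert w_x\Vert_{{}_{L^2}}^2\leq0,
\end{equation*}
and assumption \eqref{eq:ipo-stab-unbound} is precisely $c_p\max_{u\in[u_-,u_+]}|f'(u)|<\e$, making the coefficient positive. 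A further application of Poincaré, $\Vert w_x\Vert_{{}_{L^2}}^2\geq c_p^{-1}\Vert w\Vert_{{}_{L^2}}^2$, then gives $\frac{d}{dt}\Vert w\Vert_{{}_{L^2}}^2\leq-2\Lambda\Vert w\Vert_{{}_{L^2}}^2$ with $\Lambda:=c_p^{-1}\bigl(\e-c_p\max_{u\in[u_-,u_+]}|f'(u)|\bigr)>0$, and Grönwall's inequality yields the stated exponential decay.

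The only step requiring genuine care is verifying that $\xi(x,t)$ stays inside $(-1,1)$, i.e.\ combining the a priori estimate of Proposition \ref{prop:aprioriux-unbound} on $u_x$ with the pointwise bound on $v_x$ extracted from \eqref{staz-unbound}; this is what guarantees $Q'(\xi)\geq1$ and hence a diffusive coercivity constant equal to $\e$ itself. Once this is in place everything else is a routine energy estimate, and in fact cleaner than in the bounded cases, since here no smallness restriction on $u_0$ (beyond $\e\Vert u_0'\Vert_{{}_{L^\infty}}<1$) enters the decay rate.
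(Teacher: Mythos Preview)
Your proof is correct and follows the same energy-method template as the paper: same identity for $\tfrac{d}{dt}\|w\|_{L^2}^2$, same Hölder--Poincaré bound \eqref{eq:stimaf} for the convection integral, same final Poincaré--Grönwall step.

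The one genuine difference is in how you control the diffusion integral. The paper does not use the mean value theorem; instead it writes the explicit identity
\[
\frac{a}{\sqrt{1-a^2}}-\frac{b}{\sqrt{1-b^2}}=\frac{(a-b)(a+b)}{b(1-a^2)\sqrt{1-b^2}+a(1-b^2)\sqrt{1-a^2}},
\]
and then argues that the resulting fraction is bounded below by $1$ \emph{because both $u_x$ and $v_x$ are strictly positive}, which is why the paper invokes Proposition~\ref{prop:crescente} at precisely this point. Your route via $Q'(\xi)=(1-\xi^2)^{-3/2}\geq1$ is cleaner: it needs only $|\xi|<1$, which follows from Proposition~\ref{prop:aprioriux-unbound} and the bound on $v_x$, and never requires sign information on the derivatives. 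In your argument Proposition~\ref{prop:crescente} is used only to confine $u(x,t)$ to $[u_-,u_+]$ for the $f'$ estimate (and even that could be obtained from the maximum principle in Proposition~\ref{prop:aprioriux-unbound}, since $u_0$ monotone forces $[\min u_0,\max u_0]=[u_-,u_+]$). So your handling of the diffusion term is a modest but real simplification over the paper's.
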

\begin{proof}
The function $w(x,t)=u(x,t)-v(x)$ satisfies
\begin{equation*}
	\frac{1}{2}\frac{d}{dt}\|w\|^2_{{}_{L^2}}=\int_{-\ell}^{\ell}\left(f(u)-f(v)\right)w_x\,dx+\int_{-\ell}^\ell \left(\frac{\e v_x}{\sqrt{1-\e^2v^2_x}}-\frac{\e u_x}{\sqrt{1-\e^2u^2_x}}\right)w_x\,dx.
\end{equation*}
Since, for any $|a|<1$, $|b|<1$,
\begin{equation*}
	\frac{a}{\sqrt{1-a^2}}-\frac{b}{\sqrt{1-b^2}}=\frac{(a-b)(a+b)}{b(1-a^2)\sqrt{1-b^2}+a(1-b^2)\sqrt{1-a^2}},
\end{equation*}
we have
\begin{align*}
	\frac{1}{2}\frac{d}{dt}\|w\|^2_{{}_{L^2}}=&\int_{-\ell}^{\ell}\left(f(u)-f(v)\right)w_x\,dx\\
	&\quad -\e\int_{-\ell}^\ell \left(\frac{v_x+u_x}{u_x(1-\e^2v_x^2)\sqrt{1-\e^2u_x^2}+v_x(1-\e^2u_x^2)\sqrt{1-\e^2v_x^2}}\right)w^2_x\,dx.
\end{align*}
Using \eqref{eq:stimaf} and since $v_x\in(0,\e^{-1})$ and $u_x\in(0,\e^{-1})$ for any $(x,t)$ 
(see Propositions \ref{prop:ex-unbound}, \ref{prop:aprioriux-unbound} and \ref{prop:crescente}), we infer
\begin{equation*}
	\frac{1}{2}\frac{d}{dt}\|w\|^2_{{}_{L^2}}\leq \frac{4\ell^2}{\pi^2}\left( \sup_{u \in [u_-, u_+]} |f'(u)| \right)\|w_x\|^2_{{}_{L^2}}-\e\|w_x\|^2_{{}_{L^2}}.
\end{equation*}
Using the assumption \eqref{eq:ipo-stab-unbound}, we deduce that there exists $\Lambda>0$ such that
\begin{equation*}
	\frac{1}{2}\frac{d}{dt}\|w\|^2_{{}_{L^2}}+\Lambda\|w\|^2_{{}_{L^2}}\leq0,
\end{equation*}
and this concludes the proof.
\end{proof}
We briefly mention that also in this case  if $\e$ is very small, condition \eqref{eq:ipo-stab-unbound} implies a smallness (with respect to $\e$) condition on the boundary data. 
As we will see in the next section, when $\e\ll1$ and the boundary data are large enough, we will face the phenomenon of metastability, 
i.e. slow convergence towards the asymptotic limit.  

\section{Metastability for the small dissipation limit}\label{sec4}
In this section, we analyze the occurrence of a metastable dynamics for the IBVP \eqref{NonLBurg}; 
in general, a metastable behavior appears when the solution to a given evolution PDE of the form
\begin{equation}\label{sto}
	 u_t = \mathcal P^\e[u],
\end{equation}
(where $\mathcal P^\e$ is a nonlinear differential operator that is singular with respect to the parameter $\e$)
approaches his stable ({\it metastable}) steady state in an exponentially long time interval, usually of the order $\mathcal{O}(\exp\left(c/\e\right))$. 
There is a huge literature investigating such phenomenon for different evolution PDEs and by means of different techniques: 
far from being exhaustive see, among others, \cite{ABF, CarrPego89, FLM1, FLM2, OttoRezn06, Str13} for Allen-Cahn and Cahn-Hilliard like equations, 
\cite{LafoOMal94, ReynWard95, Str14, SunWard99} for viscous shock problems, and the references therein.

Before showing the theory in the case of nonlinear diffusions, let us briefly recall what happens when considering the linear viscous Burgers equation
\begin{equation}\label{eq:visc-burg}
u_t+f(u)_x=\e u_{xx},
\end{equation}
with boundary data and flux function $f$ satisfying the following conditions 
\begin{equation}\label{eq:f-burgers}
 	f(u_+)=f(u_-), \qquad f'(u_+)<0<f'(u_-), \qquad f''(u)\geq c_0>0, \quad \forall\,u\in\R.
\end{equation} 
It is well know that conditions \eqref{eq:f-burgers} imply that there exist infinitely many stationary solutions to the hyperbolic conservation law
\begin{equation}\label{eq:hyp-cons}
	u_t+f(u)_x=0, \qquad \qquad u(\pm\ell,t)=u_\pm,
\end{equation}
satisfying both Rankine-Hugoniot and entropy conditions.
In particular, any step function of the form
\begin{equation*}
	u^{\xi}(x)=\begin{cases}
			u_-, \qquad \qquad x\in(-\ell,\xi),\\
			u_+, \qquad \qquad x\in(\xi,\ell),
		\end{cases}
\end{equation*}
where $\xi\in(-\ell,\ell)$, is a stationary solution to \eqref{eq:hyp-cons},
which satisfies both the Rankine-Hugoniot and the entropy conditions.
When a viscous term $\e u_{xx}$ is added in \eqref{eq:hyp-cons}, the number of stationary solutions drastically reduces, 
and we have a unique solution to the problem
\begin{equation}\label{eq:staz-linear}
	f(u)_x=\e u_{xx}, \qquad \qquad u(\pm\ell,t)=u_\pm,
\end{equation}
with $f$ and $u_\pm$ satisfying \eqref{eq:f-burgers}.
On the other hand, the steady state \eqref{eq:staz-linear} is asymptotically stable for equation \eqref{eq:visc-cons-law} as $t\to+\infty$,
namely, if $u^\e$ is the solution to the IBVP \eqref{NonLBurg} with $Q(s)=\e s$ and $f,u_\pm$ satisfy \eqref{eq:f-burgers}, then
\begin{equation*}
	\lim_{\e\to0^+}\left(\lim_{t\to+\infty} u^\e(x,t)\right)=\lim_{\e\to0^+}u^\e_\infty(x)=\begin{cases}
			u_-, \qquad \qquad x\in(-\ell,0),\\
			u_+, \qquad \qquad x\in(0,\ell),
		\end{cases}
\end{equation*}
where $u^\e_\infty$ is the unique solution to \eqref{eq:staz-linear}, and the limit function has a jump at $0$ because we are considering the symmetric interval $[-\ell,\ell]$.
On the contrary, exchanging the order of the limits, we have
 \begin{equation*}
	\lim_{t\to+\infty}\left(\lim_{\e\to0^+} u^\e(x,t)\right)=\lim_{t\to+\infty}u^0(x)=\begin{cases}
			u_-, \qquad \qquad x\in(-\ell,\xi),\\
			u_+, \qquad \qquad x\in(\xi,\ell),
		\end{cases}
\end{equation*}
where $u^0$ is a solution to \eqref{eq:hyp-cons}.
Hence, the two limits are not interchangeable.

Many papers have been devoted to the study of the dynamics of the solutions to the viscous Burgers equation \eqref{eq:visc-burg}
in the small viscosity limit, and it is well known that, when assumptions \eqref{eq:f-burgers} are satisfied, the solutions exhibit the phenomenon of \emph{metastability}.
In particular, the solution to \eqref{eq:visc-burg} reaches its asymptotic limit after an \emph{exponentially long time},
namely a time $T_\e=\mathcal{O}\left(\exp(c/\e)\right)$, as $\e\to0^+$, and we thus have a {slow} convergence to the steady state.

\vskip0.2cm
Motivated by the behavior of its linear counterpart \eqref{eq:visc-burg}, 
we here consider the IBVP \eqref{NonLBurg} and we assume the flux function to satisfy conditions \eqref{eq:f-burgers}; 
we stress that such conditions imply, among others, that  $u_->u_+$, and, as we will see, 
this is a necessary condition for the appearance of a metastable behavior (the same happens in the linear case).

To prove that the same pattern described for the linear equation appears when considering nonlinear diffusions as \eqref{eq:Qnonmonotone} and \eqref{eq:Qnonbound}, 
we mean to adapt the strategy first developed in \cite{MS} 
and subsequently used in \cite{FGS} to study metastability for a viscous conservation law with a mean curvature type operator \eqref{MC};
in particular, the three main steps of such strategy, that we briefly recall here for the reader's convenience, are the following:
\smallbreak
\begin{itemize}
\item {\bf Step I. Construction of a one family of approximate steady states.} 
The first step is the construction of a one-parameter family {\it  of approximated steady states} $\{ U^\e(x;\xi) \}_{\xi}$, whose generic element is built in a way such that 
\begin{equation}\label{step1}
	|\langle \psi(\cdot ), \mathcal P^\e[U^\e(\cdot;\xi)]\rangle| \leq \Omega^\e(\xi)|\psi|_{L^\infty}, \quad \forall \, \psi \in C^{\infty}(I), \, \forall \, \xi \in I,
\end{equation}
where $\Omega^\e(\xi)$ is a smooth positive function such that $\Omega^\e \to 0$ for $\e \to 0$ and  $\mathcal{P}^\e$ is the operator on the right-hand side of \eqref{sto}. 
Hence $U^\e$ is constructed so that the quantity $ P^\e[U^\e]$ (which is exactly zero if $U^\e$ is the exact steady state) 
is small with respect to $\e$, and the error made is measured by $\Omega^\e$.

\smallbreak
\item {\bf Step II. Linearization.}
The second step is the linearization of the original system \eqref{sto} around an element of the family $\{U^\e(x;\xi)\}_\xi$, i.e. one  looks for a solution of the form
\begin{equation}\label{decu}
	u(x,t)= U^\e(x;\xi(t))+v(x,t),
\end{equation}
with $\xi=\xi(t)\in I$  and the perturbation ${ v}={v}(x,t)\in L^2(I)$ to be determined. 
The idea is to suppose the parameter $\xi$ to depend on time, so that to think at $\xi$ as the position of the internal interface of the solution $u$; 
in particular, its evolution describes the asymptotic convergence of the interface solution towards the equilibrium. 
\smallbreak
\item {\bf Step III. Spectral properties.} 
Last step is a spectral analysis of the linearized operator around $U^\e$, named here $\mathcal L^\e$; 
in order to derive an equation for the perturbation $v$, to be coupled with an equation of motion for the parameter $\xi$, 
we have to check that $\mathcal L^\e$ has a discrete spectrum composed by real and semi-simple eigenvalues $\{ \lambda_k^\varepsilon(\xi) \}_{k \in \N}$ such that  
$$\lim_{\e \to 0}\lambda_1^\varepsilon(\xi) = 0 \qquad \mbox{and \qquad}  \lambda^\varepsilon_k(\xi) \leq -C \quad \mbox{ for all} \ k \geq 2,$$  
for any $\xi\in I$, and for some constant $C>0$ independent of $k$, $\varepsilon$ and $\xi$. 
\end{itemize}

We refer the reader to \cite[Section 5]{FGS}, for a complete discussion on the strategy and on the assumptions. 
In order to apply such a strategy we need to consider an explicit expression of $Q$, 
this being the reason why we consider here the cases \eqref{eq:Qnonmonotone} and \eqref{eq:Qnonbound};
we stress again that the case \eqref{MC} has been already fully treated in \cite{FGS}.
\subsection{Construction of the one-parameter family - nonmonotone case}
In this section we mean at verify {\bf Step I} of the aforementioned strategy when $Q$ is given by \eqref{eq:Qnonmonotone},
i.e we consider the equation 
\begin{equation}\label{eq:nonmono-eps}
	u_t+ f(u)_x=\left(\frac{\e u_x}{1+\e^2 u_x^2}\right)_x, \qquad x\in I, \,t > 0,
\end{equation}
subject to boundary conditions
\begin{equation}\label{eq:boundary}
	u(\pm\ell,t)=u_\pm, \qquad \quad t\geq0,
\end{equation}
and initial datum
\begin{equation}\label{eq:initial}
	u(x,0)=u_0(x), \qquad \quad x\in I.
\end{equation}
We recall that the flux function $f\in C^2(\R)$ and the boundary data $u_\pm$ are assumed to satisfy \eqref{eq:f-burgers}.
Notice that assumptions \eqref{eq:f-burgers} imply the existence of a unique $\bar u\in(u_+,u_-)$ such that $f'(\bar u)=0$;
in the rest of the paper, without loss of generality we assume that $\bar u=0$ and that $f(0)=0$.
As a consequence, we have $u_+<0<u_-$.
Hence, from now on, we assume that
\begin{equation}\label{eq:f-metastability}
 	f(0)=f'(0)=0, \qquad f(u_+)=f(u_-), \qquad f'(u_+)<0<f'(u_-), \qquad \;f''(u)\geq c_0>0,
\end{equation}
for all $u \in \R$. 
Observe that from \eqref{eq:f-metastability}, it follows
\begin{equation*}
	m=\min_{u\in[u_+,u_-]}f(u)=0, \qquad \mbox{ and } \qquad M=\max_{u\in[u_+,u_-]}f(u)=f(u_\pm),
\end{equation*}
and in order to have existence of a unique smooth decreasing stationary solutions of \eqref{eq:nonmono-eps} 
(see Proposition \ref{prop:ex-nonmono}) we have to impose the condition
\begin{equation}\label{eq:condnonmono-eps}
	f(u_\pm)<\frac{1}{2}.
\end{equation}
The goal of this subsection is to construct a family of functions $U^\e(\cdot;\xi)$ approximating steady states
of the initial boundary value problem \eqref{eq:nonmono-eps}-\eqref{eq:boundary}-\eqref{eq:initial}.

\begin{proposition}\label{prop:metastablestates-nonmono}
Let $u_\pm$, $f$ be such that \eqref{eq:f-metastability} and \eqref{eq:condnonmono-eps} hold, and for $u\in H^2(I)$ denote by
\begin{equation*}
 	\mathcal P^\e[u]:=\left(\frac{\e u_x}{1+\e^2u_x^2}\right)_x-f(u)_x.
\end{equation*}
Then there exists a family of functions $\left\{U^{\varepsilon}(\cdot;\xi)\right\}_{\xi\in I}\in H^1(I)$ satisfying \eqref{eq:boundary} such that
\begin{equation}\label{eq:H1-nonmono}
	|\langle \psi,\mathcal P^\e[U^\e(\cdot;\xi)]\rangle|\leq\Omega^\e(\xi)\|\psi\|_{{}_{L^\infty}}, \qquad \qquad \forall \, \psi \in C^{\infty}(I), \, \forall \, \xi \in I,
\end{equation}
where, for $\e$ small, the function $\Omega^\e(\xi)$ satisfies
\begin{equation}\label{eq:Omega-nonmono}
	\Omega^\e(\xi)\leq c_1 \exp(-c_2(\ell-|\xi|)/\e), \qquad \qquad \xi\in I,
\end{equation}
and $c_1, c_2>0$ depend on $u_\pm$ and $f(u_\pm)$.
Moreover, there exists $\bar{\xi}\in I$ such that $\Omega^\e(\bar \xi) \equiv 0$.
\end{proposition}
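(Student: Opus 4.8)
The plan is to construct the approximate steady states by gluing together exact solutions of the stationary ODE, chosen so that each one satisfies only one of the two boundary conditions exactly, and then estimate the resulting defect. Recall from the proof of Proposition \ref{prop:ex-nonmono} that any strictly decreasing solution of $\left(\frac{\e v_x}{1+\e^2 v_x^2}\right)_x = f(v)_x$ satisfies $\frac{\e v_x}{1+\e^2 v_x^2} = f(v) + c$ for some constant $c$, and under \eqref{eq:f-metastability} the admissible range is $-\tfrac12 < c < -M = -f(u_\pm)$, which is nonempty precisely by \eqref{eq:condnonmono-eps}. The idea is to pick, for each interface location $\xi \in I$, the constant $c = c^\e(\xi) < -M$ that is exponentially close to $-M$ in such a way that the corresponding exact profile $\varphi^\e_c$ crosses the value $0$ (the location where $f$ attains its minimum $m=0$) at $x = \xi$; then one defines $U^\e(x;\xi)$ to be $\varphi^\e_c$ on the half where it correctly hits the boundary value and a suitable (possibly slightly corrected) continuation on the other half, or more simply one takes the solution of the initial value problem for the ODE starting at $(\xi, 0)$ and running in both directions, accepting that it overshoots or undershoots the boundary data by an exponentially small amount which is then absorbed by a boundary-layer correction.

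Concretely, I would set $U^\e(\cdot;\xi)$ to be the exact solution of the stationary equation with $U^\e(\xi;\xi) = 0$ and the correct value of $c$ so that, say, $U^\e(-\ell;\xi) = u_-$ exactly; then $U^\e(\ell;\xi) = u_+ + E^\e(\xi)$ where $E^\e(\xi)$ is exponentially small in $(\ell - \xi)/\e$ because near the endpoints the profile is in the regime where $f(v) + c$ is close to $0$ (for $c$ close to $-M$ and $v$ close to $u_\pm$), forcing $v_x$ to be small and the profile to approach its asymptote exponentially — this is exactly the mechanism that made $B_+ = +\infty$ in Proposition \ref{prop:ex-nonmono}. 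One then modifies $U^\e$ on a layer near $x = \ell$ to restore the boundary condition exactly; the modification costs an extra term in $\mathcal P^\e[U^\e]$ supported on that layer, again of size $\exp(-c_2(\ell - |\xi|)/\e)$. Pairing against $\psi \in C^\infty(I)$ and integrating by parts (using $U^\e(\pm\ell;\xi) = u_\pm$, so no boundary contributions survive), the only surviving term is the localized defect, giving \eqref{eq:H1-nonmono} with $\Omega^\e(\xi)$ bounded as in \eqref{eq:Omega-nonmono}. The distinguished value $\bar\xi$ is simply the one for which the two half-profiles match up with the \emph{same} constant $c$ and hit \emph{both} boundary values exactly — i.e. $\bar\xi$ is the interface position of the genuine steady state from Proposition \ref{prop:ex-nonmono}, for which $\mathcal P^\e[U^\e(\cdot;\bar\xi)] \equiv 0$ and hence $\Omega^\e(\bar\xi) \equiv 0$.

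The main obstacle, I expect, is making the exponential decay estimate on $E^\e(\xi)$ (and on the defect produced by the boundary-layer correction) precise and uniform in $\xi$. This requires a careful ODE analysis near the endpoints: one must show that as $v \to u_\pm$ with $c \to -M^-$, the quantity $f(v) + c$ behaves linearly in $(v - u_\pm)$ plus the small gap $(-M - c)$, so that $\e v_x \approx$ (this small quantity), yielding a linear ODE whose solution decays like $\exp(-\lambda(\ell - x)/\e)$ with $\lambda$ controlled by $f'(u_\pm) \neq 0$ (which is where \eqref{eq:f-metastability}'s condition $f'(u_+) < 0 < f'(u_-)$ enters decisively). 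Tracking how the free constant $c^\e(\xi)$ depends on $\xi$ — it must be chosen so that the total "transit time" $\int_{u_+}^{u_-} \frac{ds}{|\e^{-1}(\dots)|}$ splits as $(\ell+\xi)/\e$ on the left and $(\ell-\xi)/\e$ on the right — and showing the mismatch is exponentially small requires monotonicity and asymptotics of the functions $\Phi$, $\Psi$ from Proposition \ref{prop:ex-nonmono}. Everything else — the integration by parts, the $L^\infty$ pairing, the identification of $\bar\xi$ — is routine once this estimate is in hand.
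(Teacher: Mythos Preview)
Your construction is viable but takes a genuinely different route from the paper's. You use a \emph{single} integration constant (matching the left boundary and the interface value $U^\e(\xi;\xi)=0$ exactly) and then repair the resulting exponentially small mismatch at $x=\ell$ by a boundary-layer corrector. The paper instead uses \emph{two} integration constants: it takes $U^\e_-$ to be the exact stationary solution on $(-\ell,\xi)$ with $U^\e_-(-\ell)=u_-$, $U^\e_-(\xi)=0$ (constant $\kappa_-(\xi)$), and $U^\e_+$ the exact stationary solution on $(\xi,\ell)$ with $U^\e_+(\xi)=0$, $U^\e_+(\ell)=u_+$ (constant $\kappa_+(\xi)$), and glues them at $x=\xi$. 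Both boundary conditions are then satisfied \emph{exactly}, and the only defect is the jump of $Q(\e U^\e_x)$ across $\xi$: since each piece satisfies $Q(\e U^\e_{\pm,x})=f(U^\e_\pm)-\kappa_\pm$ and the pieces agree at $\xi$, one gets in the distributional sense
\[
\mathcal P^\e[U^\e(\cdot;\xi)]=\bigl(\kappa_-(\xi)-\kappa_+(\xi)\bigr)\,\delta_{x=\xi},
\]
so one may take $\Omega^\e(\xi)=|\kappa_-(\xi)-\kappa_+(\xi)|$ exactly. The exponential bound \eqref{eq:Omega-nonmono} then reduces to estimating $\kappa_\pm(\xi)-f(u_\pm)$, which the paper does directly from the implicit relations
\[
\int_0^{u_\pm}\frac{1+\sqrt{1-4(\kappa_\pm-f(s))^2}}{2(\kappa_\pm-f(s))}\,ds=\frac{\xi\mp\ell}{\e}
\]
together with the convexity bounds $f(u_\pm)+f'(u_\pm)(s-u_\pm)\le f(s)\le \tfrac{f(u_\pm)}{u_\pm}s$, yielding upper and lower bounds of the form $c\,e^{-c'(\ell\mp\xi)/\e}$ for each of $\kappa_\pm-f(u_\pm)$. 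The existence of $\bar\xi$ with $\Omega^\e(\bar\xi)=0$ is immediate because $g(\xi):=\kappa_-(\xi)-\kappa_+(\xi)$ is monotone and changes sign.

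What each approach buys: the paper's two-constant gluing sidesteps precisely the ``main obstacle'' you identified---there is no boundary-layer corrector to build, no need to propagate the profile past $u_+$, and no need to control how the corrector perturbs $\mathcal P^\e$; the defect is a single explicit number. Your approach would ultimately give the same $\Omega^\e$ (the boundary mismatch $E^\e(\xi)$ and the two-constant jump $\kappa_--\kappa_+$ are of the same exponential order), but it trades one clean algebraic identity for an additional layer of ODE/corrector analysis.
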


\begin{proof}
The family of functions $\left\{U^{\varepsilon}(\cdot;\xi)\right\}_{\xi\in I}$ is constructed by matching together two stationary solutions to \eqref{eq:unbound-eps}.
Precisely, denoting by $U^\e_-(\cdot;\xi)$ the solution to the boundary value problem
\begin{equation}\label{eq:U-nonmono}
	\mathcal P^\e[U^\e_-]=0 \qquad \mbox{ in } \, (-\ell,\xi), \qquad \qquad U^\e_-(-\ell;\xi)=u_-, \qquad U^\e_-(\xi;\xi)=0,
\end{equation} 
with $({U^\e_-})_x\in(-1,0)$ and by $U^\e_+ (\cdot;\xi)$ the solution to
\begin{equation}\label{eq:U+nonmono}
	\mathcal P^\e[U^\e_+]=0 \qquad \mbox{ in } \, (\xi,\ell), \qquad \qquad U^\e_+(\xi;\xi)=0, \qquad U^\e_+(\ell;\xi)=u_+,
\end{equation} 
with $({U^\e_+})_x\in(-1,0)$, we define the generic element of the family $\{U^\e\}_{\xi \in I}$ as
\begin{equation}\label{eq:Ueps}
	U^{\varepsilon}(x;\xi)=\left\{\begin{aligned}
		&U^\e_-(x;\xi), &\qquad & x \in (-\ell,\xi), \\
		&U^\e_+ (x;\xi), &\qquad & x \in (\xi,\ell).
           \end{aligned}\right.
\end{equation}
The existence of the solutions $U^\e_-(\cdot;\xi)$ and $U^\e_+(\cdot;\xi)$ can be proved as in Section \ref{sec:ex-nonmono}:
the function $U^\e_-$ is implicitly given by 
\begin{equation*}
	\int_{U^\e_-(x;\xi)}^{u_-}\frac{1+\sqrt{1-4\left(c-f(s)\right)^2}}{2\left(c-f(s)\right)}\,ds=\frac{\ell+x}\e,
\end{equation*}
where $c\in(f(u_\pm),1/2)$.
By imposing $U^\e_-(\xi;\xi)=0$, we get the condition 
\begin{equation*}
	\Psi(c):=\int_{0}^{u_-}\frac{1+\sqrt{1-4\left(c-f(s)\right)^2}}{2\left(c-f(s)\right)}\,ds=\frac{\ell+\xi}\e.
\end{equation*}
Since $\Psi$ is a decreasing function satisfying  
\begin{equation*}
	\lim_{c\to f(u_\pm)}\Psi(c)= +\infty,  \qquad \quad\lim_{c\to1/2}\Psi(c)=\int_{0}^{u_-}\frac{1+2\sqrt{f(s)(1-f(s))}}{1-2f(s)}\,ds=:B,
\end{equation*}
there exists a unique solution to \eqref{eq:U-nonmono} if and only if $\xi>-\ell+\e B$.
Thus, for any $\xi\in(-\ell,\ell)$ we can choose $\e$  small enough so that the existence of a unique solution to \eqref{eq:U-nonmono} is guaranteed.
Similarly, we obtain the existence of the unique solution $U^\e_+ (\cdot;\xi)$ to \eqref{eq:U+nonmono}.
In conclusion, for any $\xi\in(-\ell,\ell)$ we can choose $\e$ sufficiently small so that the function $U^{\varepsilon}(\cdot;\xi)$ in \eqref{eq:Ueps} is well-defined, and we have
\begin{equation}\label{eq:Upm-implicit-nonmono}
	\int_{0}^{u_\pm}\frac{1+\sqrt{1-4\left(\k_\pm-f(s)\right)^2}}{2\left(\k_\pm-f(s)\right)}\,ds=\frac{\xi\mp\ell}\e,
\end{equation}
for some $\kappa_\pm=\kappa_\pm(\xi)\in(f(u_\pm),1/2)$. In particular, the parameter $\xi$ (which is exactly the gluing point among $U^\e_-$ and $U_+^\e$) can be thought as the position of the internal interface of the solution.

We now verify \eqref{eq:H1-nonmono}.
A straightforward computation shows that
\begin{equation*}
	\langle \psi,\mathcal P^\e[U^\e(\cdot;\xi)] \rangle= \psi(\xi)(\k_-(\xi)-\k_+(\xi)) \quad \mbox{ for any} \ \ \psi \in C^1(I),
\end{equation*}
so that, in the distributional sense
\begin{equation*}
	\mathcal P^\e[U^\e(\cdot; \xi)]=(\k_-(\xi)-\k_+(\xi)) \delta_{x=\xi},
\end{equation*}
and we can choose $\Omega^\e=\k_--\k_+$ in \eqref{eq:H1-unbound}.
In order to obtain the estimate \eqref{eq:Omega-nonmono}, we need to evaluate $\k_--\k_+$.
To approximately compute such difference, we observe that, in view of the convexity assumed in \eqref{eq:f-metastability} for the flux function $f$, the following bounds hold: 
\begin{equation}\label{boundF}
	\begin{aligned}
	&f(u_\pm)+f'(u_+)(u-u_+) \leq f(u) \leq \frac{f(u_\pm)}{u_+} u,  \qquad & u \in [u_+,0],\\
	& f(u_\pm)-f'(u_-)(u_- -u) \leq f(u) \leq \frac{f(u_\pm)}{u_-} u, \qquad & u \in [0, u_-]. 
	\end{aligned}
\end{equation}
Using \eqref{eq:Upm-implicit-nonmono}, we get
\begin{equation*}
	\frac{\xi-\ell}\e=\int_0^{u_+} \frac{1+\sqrt{1-4\left(\k_+-f(s)\right)^2}}{2\left(\k_+-f(s)\right)} \, ds \leq\frac12\int_{u_+}^0 \frac{ds}{f(s)-\k_+},
\end{equation*}
where we used that $\k_+ - f(s)>0$ for any $s\in(u_+,0)$.
Hence, by using the lower bound in the first estimate of \eqref{boundF}, we infer
\begin{align*}
	\frac{2(\xi-\ell)}{\e} \leq \int_{u_+}^0 \frac{ds}{f(s)-\k_+} &\leq \int_{u_+}^0 \frac{ds}{ f(u_\pm)+f'(u_+)(s-u_+) -\k_+}\\
	& =\frac{1}{f'(u_+)}\log\left(\k_+-f(u_\pm)-f'(u_+)(s-u_+)\right) \Big|^{0}_{u_+}, \\
	&=\frac{1}{f'(u_+)}\log\left(1+\frac{u_+f'(u_+)}{\k_+-f(u_\pm)}\right),
\end{align*}
that is, since $f'(u_+)<0$
\begin{equation*}
	\exp\left(2f'(u_+)(\xi-\ell)/\e\right)\geq1+\frac{u_+f'(u_+)}{\k_+-f(u_\pm)} \quad  \Longrightarrow \quad \k_+-f(u_\pm)\geq \frac{u_+f'(u_+)}{\exp\left(2f'(u_+)(\xi-\ell)/\e\right)-1}.
\end{equation*}
On the other side, from \eqref{eq:Upm-implicit-nonmono} one has
\begin{equation*}
	\frac{\xi-\ell}\e= \int_0^{u_+} \frac{1+\sqrt{1-4\left(\k_+-f(s)\right)^2}}{2\left(\k_+-f(s)\right)} \, ds\geq\int_{u_+}^0 \frac{ds}{\frac{f(u_\pm)}{u_+} s-\k_+},
\end{equation*}
where we used the upper bound in the first estimate of \eqref{boundF}.
By doing similar computations as above, we end up with
\begin{equation*}
	\exp\left(f(u_\pm)(\xi-\ell)/(\e\,u_+)\right)\leq1+\frac{f(u_\pm)}{k_+-f(u_\pm)},
\end{equation*}
implying
\begin{equation*}
	\k_+-f(u_\pm)\leq\frac{f(u_\pm)}{\exp\left(f(u_\pm)(\xi-\ell)/\e\,u_+\right)-1}.
\end{equation*}
As concerning $\k_-$, again from \eqref{eq:Upm-implicit-nonmono} we have
$$
	\frac{\xi+\ell}\e=\int_0^{u_-}\frac{1+\sqrt{1-4\left(\k_--f(s)\right)^2}}{2\left(\k_--f(s)\right)}\,ds \leq\int_0^{u_-} \frac{ds}{\k_--f(s)}\leq\int_{0}^{u_-} \frac{ds}{\k_- -\frac{f(u_\pm)}{u_-} s}, 
$$
and 
\begin{align*}
	\frac{\xi+\ell}\e=\int_0^{u_-} \frac{1+\sqrt{1-4\left(\k_--f(s)\right)^2}}{2\left(\k_--f(s)\right)}\,ds &\geq\frac12\int_0^{u_-} \frac{ds}{\k_--f(s)}\\
	&\geq\frac12\int_{0}^{u_-} \frac{ds}{\k_--f(u_\pm)+f'(u_-)(u_--s)};
\end{align*}
we can thus proceed as before to obtain upper and lower bounds on the difference $\k_- - f(u_\pm)$. 
In conclusion, collecting all the computations, we obtain
\begin{equation}\label{eq:kpm-nonmono}
	\begin{aligned}
	\frac{u_+f'(u_+)}{\exp\left(2f'(u_+)(\xi-\ell)/\e\right)-1}\leq \k_+ - f(u_\pm) &\leq\frac{f(u_\pm)}{\exp\left(f(u_\pm)(\xi-\ell)/\e\,u_+\right)-1}, \\
	\frac{u_-f'(u_-)}{\exp\left(2f'(u_-)(\xi+\ell)/\e\right)-1}\leq \k_- - f(u_\pm) & \leq \frac{f(u_\pm)}{\exp\left({f(u_\pm)(\xi+\ell)/{\e \, u_-}}\right)-1}.
	\end{aligned}
\end{equation}
Thanks to \eqref{eq:kpm-nonmono}, we deduce the estimate \eqref{eq:Omega-nonmono} for $\Omega^\e=\k_--\k_+$.

Finally, denoting by $g(\xi):= \k_-(\xi)-\k_+(\xi)$, for $\xi\in I$, we observe that $g$ is a monotone decreasing function satisfying
\begin{equation*}
	\lim_{\xi\to-\ell+\e B}g(\xi)>0, \qquad \mbox{ and } \qquad \lim_{\xi\to\ell-\e B}g(\xi)<0.
\end{equation*}
It follows that there exists a unique value $\bar\xi$ such that $g(\bar \xi)=0$ and 
$U^\e(\cdot;\bar \xi)$ is the unique steady state of the boundary value problem \eqref{eq:nonmono-eps}-\eqref{eq:boundary}. 
\end{proof}
\begin{remark}\label{rem4.2}
{\rm
The positive constants $c_1$ and $c_2$ in \eqref{eq:Omega-nonmono} depend on $u_\pm$ and $f(u_\pm)$. 
For instance, in case of the Burgers flux function $f(u)=u^2/2$, one has $u_-=-u_+=u_*>0$; 
therefore, using \eqref{eq:kpm-nonmono}, if
\begin{equation*}
	\lim_{\e\to0}\frac{u_*}{\e}=\mp\infty,
\end{equation*}
then $\Omega^\e=\k_--\k_+$ is exponentially small for $\e\to0$, uniformly in any compact subset of $I$.
In particular, estimate \eqref{eq:Omega-nonmono} reads
\begin{equation}\label{eq:Omega-nonmono-burg}
	\Omega^\e(\xi)\leq Cu_*^2\exp(-u_*(\ell-|\xi|)/\e), \qquad \qquad \xi\in I,
\end{equation}
for some positive constant $C>0$ independent on $\e$ and $u_*$.
}
\end{remark}

\subsection{Construction of the one-parameter family - unbound case}
We here consider the dissipation flux function given by \eqref{eq:Qnonbound} with $\kappa=1$, i.e. we consider the equation 
\begin{equation}\label{eq:unbound-eps}
	u_t+ f(u)_x=\left(\frac{\e u_x}{\sqrt{1-\e^2u_x^2}}\right)_x, \qquad x\in I, \,t > 0,
\end{equation}
subject to boundary and initial conditions \eqref{eq:boundary}-\eqref{eq:initial}, 
with flux function $f\in C^2(\R)$ and boundary data $u_\pm$ satisfying \eqref{eq:f-metastability}.
As before, we want to construct a family  $U^\e(\cdot;\xi)$ of functions approximating the steady states
of the initial boundary value problem \eqref{eq:unbound-eps}-\eqref{eq:boundary}-\eqref{eq:initial}.

\begin{proposition}\label{prop:metastablestates-unbounded}
Assume $u_\pm$ and $f$ be such that  $u_-  - u_+ < \frac{2\ell}\e$ for any $\e$ sufficiently small and \eqref{eq:f-metastability} holds.
For $u\in H^2(I)$ denote by 
\begin{equation*}
 	\mathcal P^\e[u]:=\left(\frac{\e u_x}{\sqrt{1-\e^2u_x^2}}\right)_x-f(u)_x.
\end{equation*}
Then there exists a family of functions $\left\{U^{\varepsilon}(\cdot;\xi)\right\}_{\xi\in I}\in H^1(I)$ satisfying \eqref{eq:boundary} such that
\begin{equation}\label{eq:H1-unbound}
	|\langle \psi,\mathcal P^\e[U^\e(\cdot;\xi)]\rangle|\leq\Omega^\e(\xi)\|\psi\|_{{}_{L^\infty}}, \qquad \qquad \forall \, \psi \in C^{\infty}(I), \, \forall \, \xi \in I,
\end{equation}
where, for $\e$ small, the function $\Omega^\e(\xi)$ satisfies \eqref{eq:Omega-nonmono}.
Moreover, there exists $\bar{\xi}\in I$ such that $\Omega^\e(\bar \xi) \equiv 0$. 
\end{proposition}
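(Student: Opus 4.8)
The plan is to run the same three steps as in the proof of Proposition~\ref{prop:metastablestates-nonmono}, with the profile equation of the non monotone operator replaced by the one of the Minkowski (Born--Infeld) operator analysed in Section~\ref{sec:ex-unbound}. I would define the generic element $U^\e(\cdot;\xi)$ of the family by gluing at the point $x=\xi$ the two exact stationary profiles $U^\e_-(\cdot;\xi)$ and $U^\e_+(\cdot;\xi)$, where $U^\e_-$ solves $\mathcal P^\e[U^\e_-]=0$ on $(-\ell,\xi)$ with $U^\e_-(-\ell;\xi)=u_-$, $U^\e_-(\xi;\xi)=0$, and $U^\e_+$ solves $\mathcal P^\e[U^\e_+]=0$ on $(\xi,\ell)$ with $U^\e_+(\xi;\xi)=0$, $U^\e_+(\ell;\xi)=u_+$. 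Exactly as in Proposition~\ref{prop:ex-unbound}, each $U^\e_\pm$ solves
\begin{equation*}
	\frac{\e\,(U^\e_\pm)_x}{\sqrt{1-\e^2(U^\e_\pm)_x^2}}=f(U^\e_\pm)-\k_\pm, \qquad\text{equivalently}\qquad \e\,(U^\e_\pm)_x=-\frac{\k_\pm-f(U^\e_\pm)}{\sqrt{1+(\k_\pm-f(U^\e_\pm))^2}},
\end{equation*}
for a constant $\k_\pm=\k_\pm(\xi)>f(u_\pm)$; in particular $\e|(U^\e_\pm)_x|<1$ holds automatically, so the only constraints on $\xi$ are those making the two sub-boundary-value-problems solvable, namely $\ell+\xi>\e\,u_-$ and $\ell-\xi>\e\,|u_+|$, both satisfied for every fixed $\xi\in I$ once $\e$ is small. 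Integrating, the profiles are implicitly given by
\begin{equation*}
	\int_0^{u_\pm}\frac{\sqrt{1+(\k_\pm-f(s))^2}}{\k_\pm-f(s)}\,ds=\frac{\xi\mp\ell}{\e}.
\end{equation*}

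Since $U^\e_\pm$ are exact solutions on their subintervals, the quantity $\frac{\e U^\e_x}{\sqrt{1-\e^2 U^\e_x^2}}-f(U^\e)$ equals the constant $-\k_-$ on $(-\ell,\xi)$ and the constant $-\k_+$ on $(\xi,\ell)$, and an integration by parts yields, exactly as in Proposition~\ref{prop:metastablestates-nonmono},
\begin{equation*}
	\langle\psi,\mathcal P^\e[U^\e(\cdot;\xi)]\rangle=\psi(\xi)\big(\k_-(\xi)-\k_+(\xi)\big), \qquad \forall\,\psi\in C^\infty(I),
\end{equation*}
so that $\mathcal P^\e[U^\e(\cdot;\xi)]=(\k_--\k_+)\,\delta_{x=\xi}$ in the distributional sense and \eqref{eq:H1-unbound} holds with $\Omega^\e:=|\k_--\k_+|$. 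It then remains to estimate $\k_\pm-f(u_\pm)$. The one genuinely new feature with respect to the non monotone case is that the weight $\sqrt{1+(\k_\pm-f(s))^2}$ is not a priori bounded, and I expect this to be the main (though still mild) obstacle; I would dispose of it by a short bootstrap: from
\begin{equation*}
	\frac{\ell-\xi}{\e}=\int_{u_+}^{0}\frac{\sqrt{1+(\k_+-f(s))^2}}{\k_+-f(s)}\,ds\ \ge\ \int_{u_+}^{0}\frac{ds}{\k_+-f(s)}
\end{equation*}
(and its analogue for $\k_-$), one sees that $\k_\pm-f(u_\pm)$ must tend to $0$ as $\e\to0$, for otherwise the right-hand side stays bounded while the left-hand side diverges; hence $\sqrt{1+(\k_\pm-f(s))^2}\le\sqrt2$ for all relevant $s$ once $\e$ is small.

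From that point on, inserting the convexity bounds \eqref{boundF} on $f$ into the two implicit identities and integrating the resulting elementary rational integrands---precisely the computation already carried out in Proposition~\ref{prop:metastablestates-nonmono}---leads to two-sided estimates of the form
\begin{equation*}
	\frac{|u_\pm f'(u_\pm)|}{\exp\!\big(|f'(u_\pm)|(\ell\mp\xi)/\e\big)-1}\ \le\ \k_\pm-f(u_\pm)\ \le\ \frac{\sqrt2\,f(u_\pm)}{\exp\!\big(f(u_\pm)(\ell\mp\xi)/(\sqrt2\,|u_\pm|\,\e)\big)-1},
\end{equation*}
which are exponentially small in $1/\e$; combining them and keeping the slower of the two decay rates (the one governed by $\ell-|\xi|$) gives $\Omega^\e(\xi)=|\k_--\k_+|\le c_1\exp\!\big(-c_2(\ell-|\xi|)/\e\big)$ with $c_1,c_2>0$ depending only on $u_\pm$ and $f(u_\pm)$, i.e.\ estimate \eqref{eq:Omega-nonmono}. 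Finally, the two implicit identities show $\k_-$ and $\k_+$ to be, respectively, strictly decreasing and strictly increasing in $\xi$, so $g(\xi):=\k_-(\xi)-\k_+(\xi)$ is continuous, strictly decreasing on $I$ and changes sign; hence it has a unique zero $\bar\xi$, for which $\mathcal P^\e[U^\e(\cdot;\bar\xi)]\equiv0$, so $U^\e(\cdot;\bar\xi)$ is a stationary solution of \eqref{eq:unbound-eps}--\eqref{eq:boundary}, and by Proposition~\ref{prop:ex-unbound} (applicable since $u_--u_+<2\ell/\e$ by hypothesis) it is the unique one. Apart from the bootstrap taming $\sqrt{1+(\k_\pm-f)^2}$, the whole argument is a transcription of the non monotone case.
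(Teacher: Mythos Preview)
Your argument mirrors the paper's almost step for step: the same gluing of half-profiles $U^\e_\pm$, the same identification $\mathcal P^\e[U^\e(\cdot;\xi)]=(\k_--\k_+)\,\delta_{x=\xi}$, the same use of the convexity bounds \eqref{boundF}, and the same monotonicity argument for the unique zero $\bar\xi$. The only genuine difference is how you control the weight $\sqrt{1+(\k_\pm-f(s))^2}$ in the upper bound for $\k_\pm-f(u_\pm)$: the paper uses the elementary inequality $\sqrt{1+x^2}\le1+x$ for $x\ge0$, which produces a harmless additive shift $u_\pm$ inside the exponent, whereas you first bootstrap $\k_\pm\to f(u_\pm)$ and then bound the weight by a constant. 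Both routes give \eqref{eq:Omega-nonmono}.

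Two small corrections to your version. First, the displayed inequality in your bootstrap points the wrong way for the stated conclusion: from $(\ell-\xi)/\e\ge\int_{u_+}^0\frac{ds}{\k_+-f(s)}$ you cannot infer that the right side diverges. Argue instead directly from the defining \emph{equality}: the integrand $\sqrt{1+x^2}/x$ with $x=\k_+-f(s)$ is bounded on $[u_+,0]$ unless $\k_+-f(u_\pm)\to0$, so the left side diverging forces $\k_+\to f(u_\pm)$. Second, the bound $\sqrt{1+(\k_\pm-f(s))^2}\le\sqrt2$ requires $\k_\pm-f(s)\le1$; near $s=0$ this reads $\k_\pm\le1$, i.e.\ essentially $f(u_\pm)<1$, which is \emph{not} assumed in the unbounded case (it was the non monotone condition \eqref{eq:condnonmono-eps}). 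Replace $\sqrt2$ by any constant at least $\sqrt{1+(1+f(u_\pm))^2}$, or simply use $\sqrt{1+x^2}\le1+x$ as the paper does, and the estimate goes through with $c_1,c_2$ depending on $u_\pm,f(u_\pm)$ as claimed.
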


\begin{proof}
As in Proposition \ref{prop:metastablestates-nonmono}, the family of functions $\left\{U^{\varepsilon}(\cdot;\xi)\right\}_{\xi\in I}$ 
is constructed by matching together at $x=\xi$ (interface position) two stationary solutions $U^\e_{\pm}$ to \eqref{eq:unbound-eps};
the generic element of the family $\{U^\e\}_{\xi \in I}$ is defined as in \eqref{eq:Ueps} and also in this case $U_x^\e \in (-1,0)$ for all $x \neq \xi$. 
The function $U^\e_-$ is implicitly given by 
\begin{equation*}
	\int_{U^\e_-(x;\xi)}^{u_-}\frac{\sqrt{1+(c-f(s))^2}}{c-f(s)}\,ds=\frac{\ell+x}\e,
\end{equation*}
where $c\in(f(u_\pm),+\infty)$.
Imposing $U^\e_-(\xi;\xi)=0$, we get the condition 
\begin{equation*}
	\Psi(c):=\int_{0}^{u_-}\frac{\sqrt{1+(c-f(s))^2}}{c-f(s)}\,ds=\frac{\ell+\xi}\e.
\end{equation*}
Since $\Psi$ is a decreasing function satisfying  
\begin{equation*}
	\lim_{c\to f(u_\pm)}\Psi(c)= +\infty,  \qquad \quad\lim_{c\to+\infty}\Psi(c)=u_-,
\end{equation*}
we have the  existence of a unique $U^\e_- (\cdot;\xi)$ if and only if $\xi>-\ell+\e u_-$.
Similarly, we obtain the existence of  $U^\e_+ (\cdot;\xi)$  if and only if $\xi<\ell+\e u_+$.
Therefore, the function $U^{\varepsilon}(\cdot;\xi)$ in \eqref{eq:Ueps} is well defined for any $\xi\in(-\ell,\ell)$ if we choose $\e$ sufficiently small 
and we have
\begin{equation}\label{eq:Upm-implicit-unbound}
	\int_{0}^{u_\pm}\frac{\sqrt{1+(\kappa_\pm-f(s))^2}}{\kappa_\pm-f(s)}\,ds=\frac{\xi\mp\ell}\e,
\end{equation}
for some $\kappa_\pm=\kappa_\pm(\xi)>f(u_\pm)$.
We now verify \eqref{eq:H1-unbound}; as before
\begin{equation*}
	\mathcal P^\e[U^\e(\cdot; \xi)]=(\k_-(\xi)-\k_+(\xi)) \delta_{x=\xi},
\end{equation*}
and we can choose $\Omega^\e=\k_--\k_+$ in \eqref{eq:H1-unbound}.
Let us evaluate the difference $\k_--\k_+$ by using the estimates \eqref{boundF} and \eqref{eq:Upm-implicit-unbound}. We get
\begin{equation*}
	\frac{\xi-\ell}\e=\int_0^{u_+} \frac{\sqrt{1+(\k_+-f(s))^2}}{\k_+-f(s)} \, ds \leq\int_{u_+}^0 \frac{ds}{f(s)-\k_+}\leq\int_{u_+}^0 \frac{ds}{ f(u_\pm)+f'(u_+)(s-u_+) -\k_+},
\end{equation*}
where we used that $\k_+ - f(s)>0$ for any $s\in(u_+,0)$ and the lower bound in the first estimate of \eqref{boundF}.
Therefore, we end up with
\begin{equation*}
	\k_+-f(u_\pm)\geq \frac{u_+f'(u_+)}{\exp\left(f'(u_+)(\xi-\ell)/\e\right)-1}.
\end{equation*}
On the other side, from \eqref{eq:Upm-implicit-unbound} one has
\begin{equation*}
	\frac{\xi-\ell}\e= \int_0^{u_+} \frac{\sqrt{1+(\k_+-f(s))^2}}{\k_+-f(s)} \, ds\geq
	\int_{u_+}^0 \frac{1+\k_+-f(s)}{f(s)-\k_+} \, ds\geq u_++1\int_{u_+}^0 \frac{ds}{\frac{f(u_\pm)}{u_+} s-\k_+},
\end{equation*}
where we used the upper bound in the first estimate of \eqref{boundF}.
Hence we have
\begin{equation*}
	\k_+-f(u_\pm)\leq\frac{f(u_\pm)}{\exp\left(f(u_\pm)(\xi-\ell-u_+)/\e\,u_+\right)-1}.
\end{equation*}
As concerning $\k_-$, again for \eqref{eq:Upm-implicit-unbound} we have
$$
	\frac{\xi+\ell}\e=\int_0^{u_-} \frac{\sqrt{1+(f(s)-\k_-)^2}}{\k_--f(s)} \, ds \leq 
	\int_0^{u_-} \frac{1+\k_--f(s)}{\k_--f(s)} \, ds \leq u_-+1\int_{0}^{u_-} \frac{ds}{\k_- -\frac{f(u_\pm)}{u_-} s}, 
$$
and 
\begin{equation*}
	\frac{\xi+\ell}\e=\int_0^{u_-} \frac{\sqrt{1+(f(s)-\k_-)^2}}{\k_--f(s)} \, ds \geq\int_0^{u_-} \frac{ds}{\k_--f(s)}\geq \int_{0}^{u_-} \frac{ds}{\k_--f(u_\pm)+f'(u_-)(u_--s)}.
\end{equation*}
In conclusion, collecting all the computations, we obtain
\begin{equation}\label{eq:kpm-unbound}
	\begin{aligned}
	\frac{u_+f'(u_+)}{\exp\left(f'(u_+)(\xi-\ell)/\e\right)-1}\leq \k_+ - f(u_\pm) &\leq\frac{f(u_\pm)}{\exp\left(f(u_\pm)(\xi-\ell-u_+)/\e\,u_+\right)-1}, \\
	\frac{u_-f'(u_-)}{\exp\left(f'(u_-)(\xi+\ell)/\e\right)-1}\leq \k_- - f(u_\pm) & \leq \frac{f(u_\pm)}{\exp\left({f(u_\pm)(\xi+\ell-u_-)/{\e \, u_-}}\right)-1}.
	\end{aligned}
\end{equation}
Therefore, $\Omega^\e=\k_--\k_+$ satisfies \eqref{eq:Omega-nonmono} and it is exponentially small for $\e\to0$, uniformly in any compact subset of $J\subset I$.

Furthermore, denoting by $g(\xi):= \k_-(\xi)-\k_+(\xi)$, we observe that $g$ is a monotone decreasing function satisfying
\begin{equation*}
	\lim_{\xi\to-\ell+\e u_-}g(\xi)=+\infty, \qquad \mbox{ and } \qquad \lim_{\xi\to\ell+\e u_+}g(\xi)=-\infty.
\end{equation*}
It follows that there exists a unique value $\bar\xi$ such that $g(\bar \xi)=0$ and 
$U^\e(\cdot;\bar \xi)$ is the unique steady state of the boundary value problem \eqref{eq:unbound-eps}-\eqref{eq:boundary}. 
\end{proof}
As in Remark \ref{rem4.2}, we can prove that, in the case of a Burgers flux $f(u)=u^2/2$, the function $\Omega^\e$ satisfies \eqref{eq:Omega-nonmono-burg}.

\subsection{Linearization}
Let us consider the operator 
\begin{equation*}
	\mathcal{P}^\e[u]:=Q(\e u_x)_x-f(u)_x,
\end{equation*}
defined for $u\in H^2(I)$, and let us fix $U^\e\in C^2(I)$.  
The linearized operator of $\mathcal{P}^\e[u]$ around $U^\e$ (obtained by looking for a solution in the form $u=U^\e+v$, 
being $U^\e$ the function constructed in Propositions \ref{prop:metastablestates-nonmono} and  \ref{prop:metastablestates-unbounded}) is 
\begin{equation}\label{eq:linearized}
	\mathcal{L}^\e v:= \e Q'(\e U^\e_x)v_{xx}+\e^2 Q''(\e U^\e_x)U^\e_{xx}v_x-(f'(U^\e)v)_x.
\end{equation}
\begin{remark}\label{approssimataU}
{\rm
We observe that the functions $U^\e$ defined in Propositions \ref{prop:metastablestates-nonmono} and  \ref{prop:metastablestates-unbounded} 
are actually $H^1$-functions with a continuous derivative up to the jump located at $x=\xi$. 
However, since $C^k$ is dense in $H^1$ for arbitrarily large $k$, we can approximate $U^\e$ with a smooth function up to an arbitrarily small error;
henceforth, from now on we will actually work with a smooth approximation of $U^\e$, still denoted by $U^\e$ for simplicity.
}
\end{remark}

The goal of this subsection is to study the spectral properties of the eigenvalue problem 
\begin{equation}\label{eq:EVP}
	\mathcal{L}^\e v=\lambda v, \qquad \qquad v(\pm\ell)=0.
\end{equation}
To this aim, we rewrite the operator  \eqref{eq:linearized} in the form
\begin{equation*}
	\mathcal{L}^\e v:= pv_{xx}+qv_x+rv,  
\end{equation*}
where
\begin{equation*}
	 p=p(x):=\e Q'(\e U^\e_x(x)), \qquad q=q(x):=p'(x)-f'(U^\e(x)), \qquad r=r(x)=-f''(U^\e)U^\e_x(x).
\end{equation*}
Therefore, we can rewrite the eigenvalue problem \eqref{eq:EVP} in the \emph{Sturm-Liuoville form}
\begin{equation}\label{eq:EVP-ST-L}
	(\rho p v_x)_x+\rho r v=\lambda\rho v, \qquad \qquad v(\pm\ell)=0,
\end{equation}
where the weight function $\rho=\rho(x)$ satisfies
\begin{equation*}
	(\rho p)'=\rho q.
\end{equation*}
By solving the last equation for $\rho$ we find
\begin{equation*}
	\rho(x)=\rho_0\exp\left(-\int_{x_0}^x\frac{f'(U^\e(s))}{p(s)}\,ds\right)=\rho_0\exp\left(-\frac1\e\int_{x_0}^x\frac{f'(U^\e(s))}{Q'(\e U^\e_x(s))}\,ds\right).
\end{equation*}
As done in the previous subsections, we restrict our analysis to the following choices for the dissipation flux function $Q$:
\begin{align}
	&Q(s)=\frac{s}{1+s^2},   &Q'(s)&=\frac{1-s^2}{(1+s^2)^2}, \label{eq:Qnonmono}\\
	&Q(s)=\frac{s}{\sqrt{1-s^2}},  &Q'(s)&=\frac{1}{(1-s^2)^{3/2}}. \label{eq:Qunbound}
\end{align}
We stress once again that the case of a mean curvature type dissipation \eqref{MC} has been already studied in \cite{FGS}.
\begin{proposition}\label{prop:eigenvalues}

The eigenvalue problem \eqref{eq:EVP}, with $\mathcal{L}^\e$ defined in \eqref{eq:linearized} and $Q$ given by \eqref{eq:Qnonmono} or \eqref{eq:Qunbound}
has an infinite sequence of real eigenvalues $\{\lambda^\e_k\}_{k \in \N}$, such that
\begin{equation*}
	\dots < \lambda^\e_3 < \lambda^\e_2 < \lambda^\e_1<0, \quad \mbox{ and } \quad \lambda^\e_k \to -\infty \ \mbox{ as } \ k \to \infty.
\end{equation*}
Moreover, to each eigenvalue $\lambda^\e_k$ corresponds a single eigenfunction $\varphi^\e_k$, having exactly $k-1$ zeros in $I$, 
and the sequence of the eigenfunctions $\{\varphi^\e_k\}_{k \in \N}$ forms an orthogonal basis in the weighted space $L^2_\rho(I)$.
\end{proposition}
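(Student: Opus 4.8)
The plan is to read \eqref{eq:EVP-ST-L} as a classical \emph{regular} Sturm--Liouville eigenvalue problem on the bounded interval $I=(-\ell,\ell)$ with separated (Dirichlet) boundary conditions. Once the standard hypotheses are checked, the whole statement -- except for the sign of $\lambda_1^\e$ -- follows from Sturm--Liouville theory; the negativity of the principal eigenvalue will then be extracted from the translation invariance of the operator $\mathcal P^\e$.

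First I would verify the hypotheses. By Remark \ref{approssimataU} we may assume $U^\e\in C^2(\bar I)$, so that the coefficients $p=\e Q'(\e U^\e_x)$, $q=p'-f'(U^\e)$ and $r=-f''(U^\e)U^\e_x$ of $\mathcal L^\e$ are continuous on $\bar I$. The delicate point is the positivity $p\geq p_0>0$ on $\bar I$: this is exactly where the explicit form of $Q$ and the a~priori gradient bound $\e|U^\e_x|<1$ on $\bar I$ -- built into the construction of $U^\e$ in Propositions \ref{prop:metastablestates-nonmono} and \ref{prop:metastablestates-unbounded} -- are used. Since $U^\e$ is strictly monotone, the set $\{\e U^\e_x(x):x\in\bar I\}$ is a compact subset of $(-1,0)$, and there $Q'>0$ by the formulas \eqref{eq:Qnonmono}--\eqref{eq:Qunbound}; hence $p$ is bounded below by a positive constant on $\bar I$. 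Finally, the weight $\rho$ defined by the exponential formula preceding \eqref{eq:Qnonmono} is continuous and satisfies $0<\rho_1\leq\rho\leq\rho_2$ on $\bar I$, because $f'(U^\e)/Q'(\e U^\e_x)$ is bounded there; in particular $L^2_\rho(I)$ and $L^2(I)$ agree as sets, with equivalent norms.

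With these verifications in hand, $\mathcal L^\e$ is self-adjoint in $L^2_\rho(I)$ with compact resolvent, and the classical Sturm--Liouville theory applies: the spectrum consists of real, simple eigenvalues forming a strictly decreasing sequence $\lambda_1^\e>\lambda_2^\e>\cdots\to-\infty$; by the Sturm oscillation theorem the eigenfunction $\varphi_k^\e$ associated with $\lambda_k^\e$ has exactly $k-1$ zeros in $I$; and $\{\varphi_k^\e\}_{k\in\N}$ is a complete orthogonal system in $L^2_\rho(I)$. This is the full statement except for $\lambda_1^\e<0$.

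To obtain $\lambda_1^\e<0$ I would use that $\mathcal P^\e$ carries no explicit $x$-dependence: differentiating $\mathcal P^\e[U^\e]=0$ in $x$ yields $\mathcal L^\e\phi=0$ for $\phi:=-U^\e_x$, and by \eqref{eq:f-metastability} (strict monotonicity of $U^\e$) one has $\phi>0$ on all of $\bar I$, in particular $\phi(\pm\ell)\neq0$, so $\phi$ is not an eigenfunction of the Dirichlet problem. Let $\varphi_1^\e>0$ in $I$ be the principal eigenfunction; since it solves a second order linear ODE with continuous coefficients, Cauchy uniqueness forces $(\varphi_1^\e)_x(-\ell)>0>(\varphi_1^\e)_x(\ell)$. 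Applying the Lagrange identity to the pair $(\varphi_1^\e,\phi)$ for the symmetric form $Mv:=(\rho p v_x)_x+\rho r v=\rho\,\mathcal L^\e v$, and using $\varphi_1^\e(\pm\ell)=0$, one gets
\begin{equation*}
	-\lambda_1^\e\int_{-\ell}^{\ell}\rho\,\phi\,\varphi_1^\e\,dx=\Big[\rho p\big(\varphi_1^\e\phi_x-\phi\,(\varphi_1^\e)_x\big)\Big]_{-\ell}^{\ell}=-\Big[\rho p\,\phi\,(\varphi_1^\e)_x\Big]_{-\ell}^{\ell}>0,
\end{equation*}
because $\rho,p,\phi>0$ while $(\varphi_1^\e)_x(\ell)<0<(\varphi_1^\e)_x(-\ell)$; since also $\int_{-\ell}^{\ell}\rho\,\phi\,\varphi_1^\e\,dx>0$, it follows that $\lambda_1^\e<0$. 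When $U^\e$ is the smooth approximation of the glued profile rather than an exact steady state, $\mathcal L^\e\phi$ is only exponentially small in $\e$ (controlled by $\Omega^\e$), so one applies the argument either to the exact steady state $U^\e(\cdot;\bar\xi)$ -- for which the zero count, simplicity and completeness are unchanged -- or, for $\e$ small, directly, the exponentially small volume contribution in the Lagrange identity being dominated by the strictly positive boundary term. The main obstacle is thus twofold: securing $p\geq p_0>0$ (the genuinely nontrivial point when $Q$ is the non-monotone flux \eqref{eq:Qnonmono}, since one must remain in the range $\e|U^\e_x|<1$ where $Q$ is still increasing) and establishing the \emph{strict} inequality $\lambda_1^\e<0$ via the sign-definite near-null function $-U^\e_x$; everything else is routine Sturm--Liouville theory.
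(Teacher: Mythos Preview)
Your Sturm--Liouville reduction is essentially the paper's: both verify $p=\e Q'(\e U^\e_x)>0$ via $-\e^{-1}<U^\e_x<0$ and then invoke classical theory for the existence, simplicity, oscillation count and completeness.

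Where you diverge from the paper is in proving $\lambda_1^\e<0$. The paper does \emph{not} use the near-null function $-U^\e_x$; instead it simply integrates $\mathcal L^\e\varphi_1^\e=\lambda_1^\e\varphi_1^\e$ over $I$ and exploits the divergence form
\[
\mathcal L^\e v=(pv_x)_x-(f'(U^\e)v)_x,
\]
so that $\lambda_1^\e\int_I\varphi_1^\e\,dx=p(\ell)(\varphi_1^\e)'(\ell)-p(-\ell)(\varphi_1^\e)'(-\ell)$, and the sign follows from $(\varphi_1^\e)'(\ell)<0<(\varphi_1^\e)'(-\ell)$. This uses nothing about $U^\e$ being close to a steady state and works for every $\xi$.

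Your Lagrange-identity argument with $\phi=-U^\e_x$ is clean at $\xi=\bar\xi$, but the extension to general $\xi$ has a gap. You assert that the volume contribution $\int\rho\varphi_1^\e\,\mathcal L^\e\phi$ is dominated by the boundary term, yet the boundary term carries the factor $\phi(\pm\ell)=-U^\e_x(\pm\ell)$, which by the asymptotics following \eqref{roughlambda1} is itself exponentially small (of order $e^{-c(\ell\mp\xi)/\e}$). Since $\mathcal L^\e\phi=-(\mathcal P^\e[U^\e])_x$ and $\mathcal P^\e[U^\e]$ is a bump near $\xi$ with mass $\Omega^\e\sim e^{-c(\ell-|\xi|)/\e}$, the volume term is generically of the \emph{same or larger} exponential order than the boundary term, so the claimed dominance is not justified. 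Falling back on ``apply the argument to $U^\e(\cdot;\bar\xi)$'' does not recover the proposition, which is needed for all $\xi$. The paper's direct-integration trick sidesteps this entirely.
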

\begin{proof}
The proof relies on the fact that we can rewrite \eqref{eq:EVP} in the form \eqref{eq:EVP-ST-L} with $p',r,\rho\in C(I)$ and $p,\rho>0$ in $I$.
Indeed, $p=\e Q'(\e U^\e_x)$ and $Q'>0$ in $(-1,1)$ for both the choices of $Q$ we are considering;
hence, $p>0$  since $-\e^{-1}<U^\e_x<0$.

We can thus apply the classical Sturm--Liouville theory (see \cite[Theorem 2.29]{SL}), providing the existence of an infinite sequence of real eigenvalues converging at $-\infty$, 
with corresponding eigenfunctions satisfying the properties of the statement.

In order to show that all the eigenvalues are negative, we study the sign of the first eigenvalue;
we assume, without loss of generality, that $\varphi^\e_1 >0$ in $I$ and we integrate the relation $\mathcal L^\e\varphi^\e_1= \lambda^\e_1 \varphi^\e_1$.
Because of the expressions of $p,q,r$, it follows
\begin{equation}\label{segnolambda1}
	\begin{aligned}
	\lambda^\e_1 \int_I \varphi^\e_1 \, dx &=
	p\,{\varphi^\e_1}'\big|^{\ell}_{-\ell}-(f'(U^\e)\varphi^\e_1)\big|^{\ell}_{-\ell}\\
	&=p(\ell){\varphi^\e_1}'(\ell)-p(-\ell){\varphi^\e_1}'(-\ell),
	\end{aligned}
\end{equation}
where in the last line we used  $\varphi^\e_1(\pm \ell)=0$. 
Since $\int_I \varphi^\e_1 >0$, $p>0$ and ${\varphi^\e_1}'(\pm\ell)\lessgtr0$, it follows $\lambda^\e_1 <0$, and the proof is complete.
\end{proof}

\subsubsection{Asymptotics for the first eigenvalue}
\smallbreak
We want to give an estimate for the first eigenvalue $\lambda_1^\e$ of the linearized operator \eqref{eq:linearized},
so that to show that the spectral properties stated in {\bf Step III} of the strategy are indeed satisfied. 
For simplicity, we here show the computations only in the case of the nonmomotone dissipation
$$Q(s)=\frac{s}{1+s^2},$$
being the case of an unbounded dissipation completely analogous. 
Also, for the sake of simplicity, we consider a flux function of Burgers type, i.e., $f(u)={u^2}/2$; 
we stress that in this case the boundary data $u_\pm= \mp u_*$ for some $u_*>0$ (see condition \eqref{eq:f-metastability}) 
and that condition \eqref{eq:condnonmono-eps} gives the constraint $u_* < 1$.

By integrating the relation $\mathcal L^\e\varphi^\e_1= \lambda^\e_1 \varphi^\e_1$, we obtain, as in \eqref{segnolambda1}
\begin{equation}\label{roughlambda1}
\lambda_1^\e\int_I\varphi^\e_1\,dx  =p\,{ \varphi^\e_1}' \big|^{\ell}_{-\ell}.
\end{equation}
Moreover, since $U^\e_{\pm}$ are stationary solutions in the intervals $(-\ell,\xi)$ and $(\xi, \ell)$ respectively, 
we deduce
\begin{equation*}
	\e \d_x U^\e_{\pm}= \frac{2 \left(f(U^\e_{\pm})-\k_{\pm}\right)}{{1+ \sqrt{1-4(f(U^\e_{\pm})-\k_{\pm})^2}}}
\end{equation*}
which, by using  \eqref{eq:kpm-nonmono}, leads to
\begin{equation*}
	\d_x U^\e(\pm \ell) = \frac{2\e^{-1} \left(f(u_{\pm})-\k_{\pm}\right)}{{1+ \sqrt{1-4(f(u_{\pm})-\k_{\pm})^2}}}
 \approx   -\frac{2 u_*^2}{2 \e} \, e^{-u_*(\ell-|\xi|)/{\e}}.
\end{equation*}
Going further, we can state that $\varphi_1^\e \approx U_x^\e$ (for a proof of this statement, we refer the reader to \cite[Lemma 5.3]{FGS} and \cite[Remark 5.4]{FGS}); hence
$${\varphi^\e_1}'(\pm\ell) \approx U^\e_{xx} (\pm \ell),$$
where
\begin{equation*}
	U^\e_{xx}(\pm \ell) = \frac{(1+{\e^2U^\e_x{(\pm \ell)}^2)}^2}{1-{\e^2U_x^\e(\pm \ell)}^2} \, U^\e(\pm \ell) U^\e_x(\pm \ell)\approx 
	\frac{u_*^3}{\e}\, e^{-u_*(\ell-|\xi|)/\e},
\end{equation*}
and 
\begin{equation*}
	\int_I\varphi_1^\e\,dx\approx U^\e(x)\Big|^{\ell}_{-\ell}=2u_*.
\end{equation*}
Recalling that $p(x) \approx \e$, from \eqref{roughlambda1} we finally  have
\begin{equation}\label{rough}
	\lambda_1^\e(\xi)\approx-u_*^2 e^{-u_*(\ell-|\xi|)/\e}.
\end{equation}
We stress that, since the large time behavior of the solution is heuristically dictated by terms of the order $e^{\lambda_1^\e t}$, 
we expect $\lambda_1^\e$ to give a good approximation of the speed rate of convergence of the solution towards its asymptotic configuration (as in the linear case, see \cite{MS}). 
In particular, we expect to have a metastable behavior whenever $\lambda_1^\e$ is exponentially small in $\e$.

\subsection{Conclusions}\label{sec:4concl}
Having proved that the strategy developed in \cite{MS} is applicable (i.e. having proved {\bf Steps I-II-III} and the hypotheses therein), we can proceed as in \cite{FGS}; 
in particular, as done in \cite{FGS}, we can apply  \cite[Theorem 3.4]{Str17} to prove that the following estimate for the perturbation $v$
\begin{equation}\label{stimav}
\|v\|_{{}_{L^2}}(t) \leq  \|v_0\|_{{}_{L^2}} e^{\nu^\e t}+ c \, t \,  |\Omega^\e|_{{}_{{}_{L^\infty}}}, \quad \nu^\e := c|\Omega^\e|_{{}_{L^\infty}}-\sup_{\xi} |\lambda_1^\e(\xi)|,
\end{equation}
holds. 
Such estimates state that the $L^2$--norm of $v$ is exponentially small as $\e \to 0$ and for large time (provided $\|v_0\|_{{}_{L^2}}$ to be small enough), 
up to a reminder that is measured by $\Omega^\e$, which is again exponentially small in $\e$ 
(see \eqref{eq:Omega-nonmono}).

In particular, estimate \eqref{stimav} can be used to decouple the  following ODE for the variable $\xi=\xi(t)$ 
(for the details on how it can be obtained, see again \cite{FGS, Str17})
\begin{equation*}
	\frac{d\xi}{dt} = \langle \psi^\varepsilon_1,{\mathcal P^\e[U^{\varepsilon}] \rangle}(1+v) \leq  \langle \psi^\varepsilon_1,{\mathcal P^\e[U^{\varepsilon}] \rangle}(1+r), 
\end{equation*}
with $|r|\leq C \left( \|v_0\|_{{}_{L^2}}e^{\nu^\e t} + t|\Omega^\e|_{{}_{L^\infty}}\right)$, and where $\psi_1^\e$ is the first eigenfunction of the adjoint operator  $\mathcal L^{\e,*}$. 
Since $r$ is exponentially small as $\e\to0$, the speed rate of $\xi(t)$ is thus asymptotically given by  
\begin{equation}\label{eq:xi}
	\frac{d\xi}{dt} \approx  \langle \psi^\varepsilon_1,{\mathcal P^\e[U^{\varepsilon}] \rangle}= \psi^\varepsilon_1(\xi)\left(\k_-(\xi)-\k_+(\xi)\right)=:\theta^\e(\xi).
\end{equation}
We can prove that $\psi_1^\e(\xi)\approx C/(u_--u_+)$ as $\e \to 0^+$ for any $\xi\in(-\ell,\ell)$, 
so that the dynamics of $\xi$ as $\e \to 0$ is governed by the difference $g(\xi):=\k_-(\xi)-\k_+(\xi)$. 
We have already studied the properties of the function $g$;
it is a monotone decreasing function such that there exists a unique $\bar\xi\in(-\ell,\ell)$ with $g(\bar\xi)=0$. 
Moreover, it is exponentially small in $\e$ (see Propositions \ref{prop:metastablestates-nonmono} and \ref{prop:metastablestates-unbounded}).

Hence, if looking at $\xi(t)$ as the position of the (unique) interface of the solution $u$ to the original systems 
(respectively \eqref{eq:nonmono-eps}-\eqref{eq:boundary} and \eqref{eq:unbound-eps}-\eqref{eq:boundary}), we have convergence of $\xi$ to $\bar\xi$; 
the speed rate of such convergence is given by \eqref{eq:xi}, and so by the magnitude of $\theta^\e(\xi)$, which is exponentially small in $\e$. 
Because of the decomposition $u(x,t)=U^\e(x;\xi(t))+v(x,t)$, it follows that  $u$ is converging to $U^\e(\cdot;\bar\xi)$ for large times, 
being $U^\e(\cdot;\bar\xi)$ the unique steady state of the system, with exponentially slow speed, leading to a metastable behavior.
\vskip0.2cm

For example, in the case of the boundary value problem \eqref{eq:nonmono-eps}-\eqref{eq:boundary} with $f(u)=u^2/2$, we have
\begin{equation}\label{stocazzo}
	|\theta^\e(\xi)|\leq C  \, u_*\exp(-u_*(\ell-|\xi|)/\e), \qquad \qquad \xi \in I,
\end{equation}
where we used  \eqref{eq:Omega-nonmono-burg} and $\psi_1^\e(\xi)\approx C/u_*$. 
Hence, if choosing the boundary data so that assumption \eqref{eq:condnonmono-eps} is satisfied, 
we have that the speed rate of convergence of the solution is {$\mathcal O(e^{-u_*/\e})$}. 
It is worth notice, however, that if the boundary data are taken too small (for example $u_*=\e$), 
then estimate \eqref{stocazzo} only gives a speed rate of convergence of the order $\e{}$, and no metastability will be observed. 
This is consistent with the stability properties of the steady state proved in Theorem \ref{thm:stab-nonmono}, and in particular with assumption \eqref{eq:ipo-stab-nonmono}: 
if $u_*$ is too small, then \eqref{eq:ipo-stab-nonmono} is satisfied and we enter the setting of Theorem \ref{thm:stab-nonmono}, 
that is we have fast convergence towards the equilibrium.
The same discussion can be done in the case \eqref{eq:unbound-eps}-\eqref{eq:boundary},
where the boundary data are not subject to any smallness condition.
Let us remark that also in this case if we choose $u_*=\e$, no metastability will be observed.
In particular, if $u_*=\e$ then in both the cases \eqref{eq:nonmono-eps} and \eqref{eq:unbound-eps} we obtain the following
estimate for $g$:
\begin{equation*}
	|\theta^\e(\xi)|\leq C  \, \e \gamma(\xi),
\end{equation*}
where $\gamma$ does not depend on $\e$.
It follows that, if we consider the new variable $\tau=\e^{-1}t$ and the function $\tilde u(x,\tau)=u(x,\e\tau)$, 
where $u$ is a solution to \eqref{NonLBurg}, then $\tilde u$ solves
\begin{equation}\label{eq:rescale}
	\tilde u_\tau=\e^{-1}{Q(\e\tilde u_x)}_x-\e^{-1} {f(\tilde u)}_x,
\end{equation}
and its speed rate of convergence does not depend on $\e$. 
We will show some numerical evidence of this fact in the next section.

\section{Numerical solutions}\label{sec5}
In this section, we illustrate some numerical simulations for the time-dependent solution  to the following  problems
\begin{align}
	u_t & =\left(\frac{\e u_x}{{1+ \e^2u_x^2}}\right)_x - f(u)_x, &\quad &x\in I, t > 0, \label{eq:QnonmonoNUM}\\
	u_t & =\left(\frac{\e u_x}{{\sqrt{1-\e^2 u_x^2}}}\right)_x - f(u)_x, &\quad &x\in I, t > 0,  \label{eq:QunboundNUM}
\end{align}
complemented with boundary and initial conditions 
\begin{equation}\label{BIC}
		u(\pm\ell,t)=u_\pm \ \ \mbox{for } \ t >0, \qquad \mbox{and} \qquad u(x,0) =u_0(x)  \ \ \mbox{for } \  x\in I.
\end{equation}
The goal is to numerically compute the speed rate of convergence of the solutions and to show that a metastable behavior indeed appears, 
so that to give evidence of the rigorous results of Section \ref{sec4}. 

As before, the flux function $f$ and the boundary data $u_\pm$ are required to satisfy \eqref{eq:f-metastability} 
(in the case \eqref{eq:QnonmonoNUM} we choose the boundary data so that also condition \eqref{eq:condnonmono-eps} is satisfied);
notice that, as in the linear case, the assumptions in \eqref{eq:f-metastability} are needed in order to observe a metastable behavior 
(that is, in order for the steady state to be {\it metastable}), while they are not necessary for its existence
(see \cite[Section 4]{FGS} for some numerics in the case of a dissipation flux function \eqref{MC} when such assumptions are violated).  
To fix the ideas, in all the numerical examples we choose the Burgers flux $f(u)=u^2/2$ and the boundary data $u_\pm=\mp u_*$ for some $u_*>0$. 
In such a way, condition \eqref{eq:f-metastability} is satisfied and we will show that the appearance of the metastable dynamics depends on the choice of $u_*$.
With this choice of $f$, condition \eqref{eq:condnonmono-eps}, which is needed to have existence of the steady states in the case \eqref{eq:QnonmonoNUM}, 
becomes $u_*<1$, while  conditions \eqref{eq:ipo-stab-nonmono}, \eqref{eq:ipo-stab-unbound}, 
which ensure stability of the steady states in the cases \eqref{eq:QnonmonoNUM} and \eqref{eq:QunboundNUM}, read as
\begin{equation*}
	u_*\leq \Gamma_1\e \qquad\mbox{and} \qquad u_*\leq\frac{\pi^2\e}{4\ell^2}.
\end{equation*}
We stress once again that, if the boundary data are taken of the order $\e$ (hence too small) 
we enter the setting of Theorems \ref{thm:stab-nonmono}-\ref{thm:stab-unbound}, and the steady state is stable but not metastable.
\subsection{The non monotone case}

Figure \ref{fig:eps005nonmono} shows the metastable behavior occurring for the solutions to \eqref{eq:QnonmonoNUM}-\eqref{BIC}
and gives a flavor of how the size of the parameter $\e$ influences the speed rate of convergence of the solution towards the steady state; 
we can clearly see that, as soon as $\e$ becomes smaller (right picture), the time needed to reach the equilibrium becomes much bigger 
(compare the same position $x \sim 0$ for the interface reached for times of the order $10^6$ when $\e=0.05$ 
and for times of the order $10^{11}$ when $\e=0.025$).

\begin{figure}[h] 
\centering
\includegraphics[width=7cm,height=5.5cm]{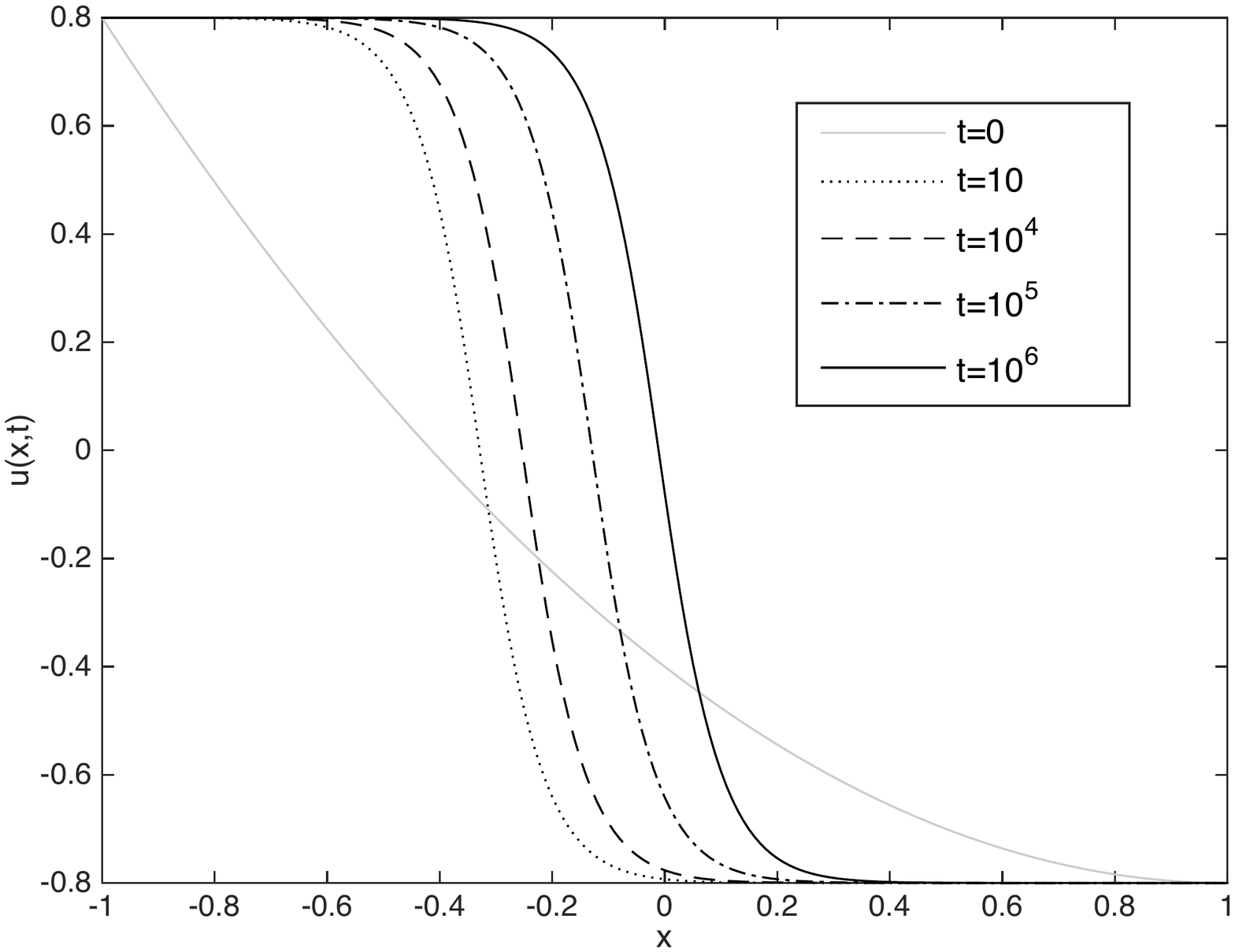}
 \hspace{3mm}
\includegraphics[width=7cm,height=5.5cm]{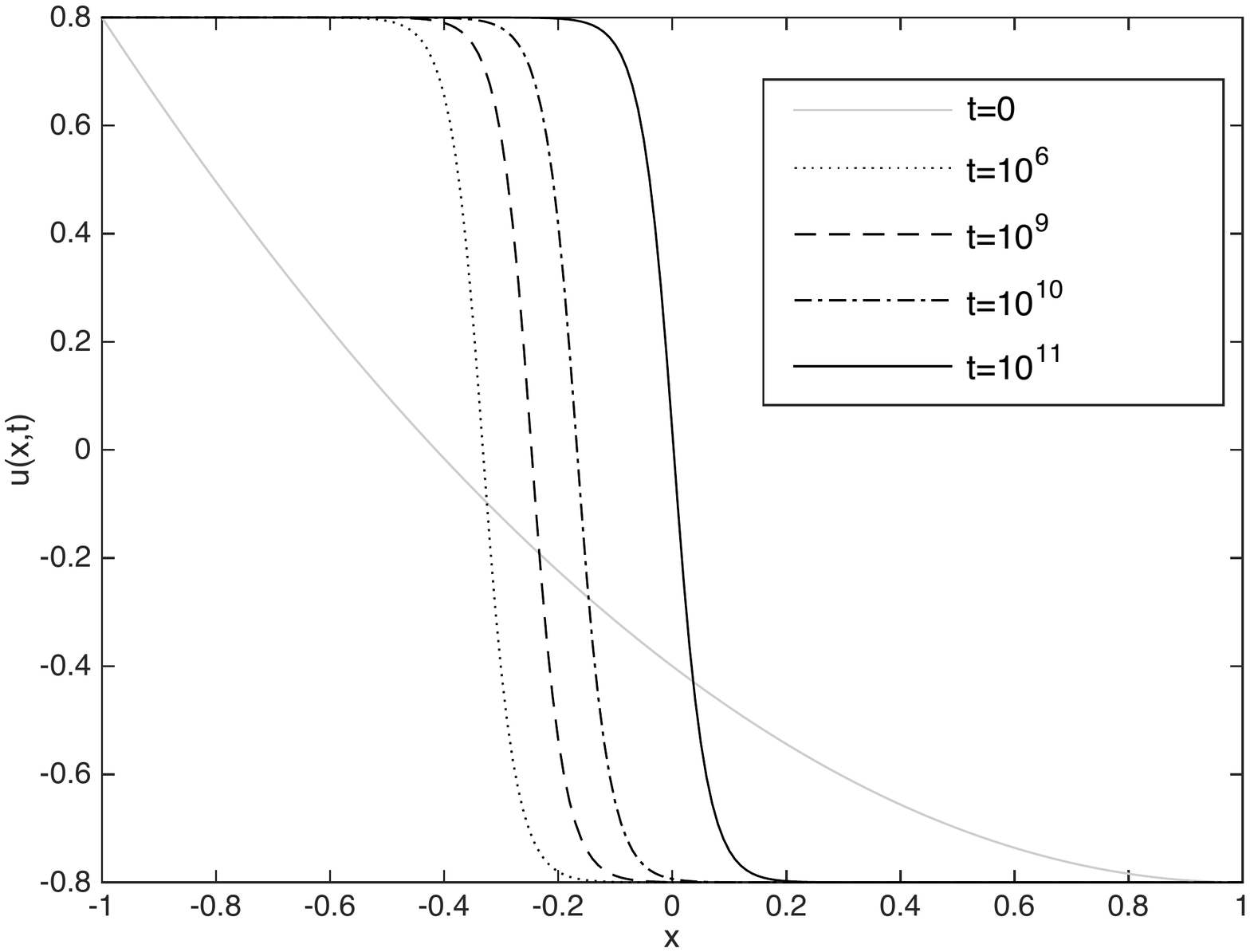}
 \hspace{3mm}
 \caption{\small{The dynamics of the solution to the IBVP \eqref{eq:QnonmonoNUM}-\eqref{BIC} with 
 initial datum $u_0(x)=0.8\left(\frac{1}{2}x^2-x-\frac{1}{2}\right)$ and $u_*=0.8$. 
 In the left picture $\e=0.05$, in the right one $\e=0.025$.}}\label{fig:eps005nonmono}
 \end{figure}
 
In Figure \ref{fig:nonmonodiscon} (left hand side), 
we plot what happens when considering {\it discontinuous initial data} with a {\it positive zero};  
the solution becomes smooth in short times, and we still observe a metastable behavior, 
with the interface moving towards the left (with negative speed) to reach its asymptotic configuration.
In the right hand picture we see what happens if starting from initial data which are nor {\it decreasing } nor such that $|u_0(x)| < u_*$.
We can clearly see that the latter are no necessary conditions for the appearance of a metastable behavior; 
indeed, the solution starting from such initial configuration develops into a decreasing function $u$ such that $|u|< u_*$ in short times, 
and then experiences the same metastable behavior.

 \begin{figure}[ht]
\centering
\includegraphics[width=7cm,height=5.5cm]{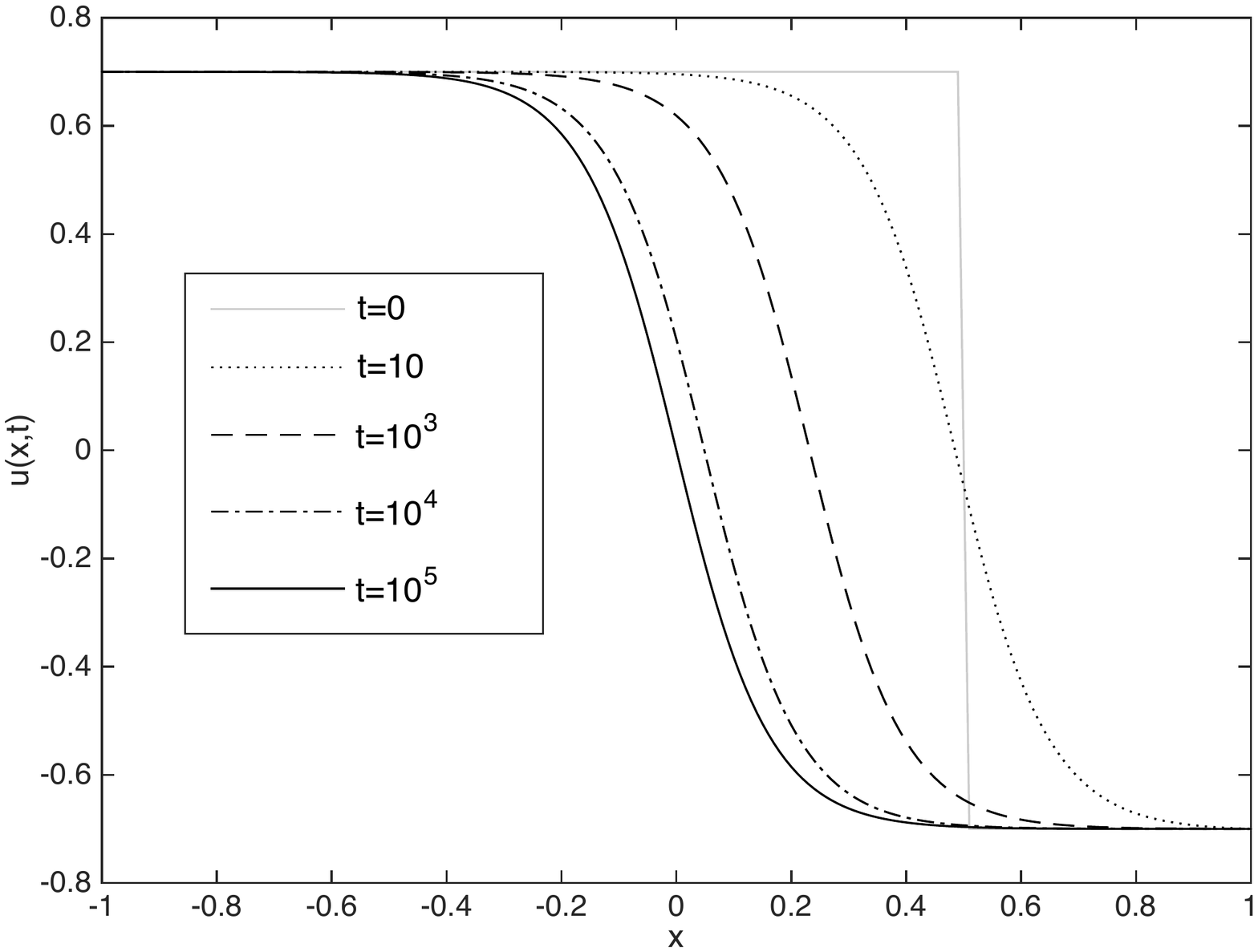}
\hspace{3mm}
\includegraphics[width=7cm,height=5.5cm]{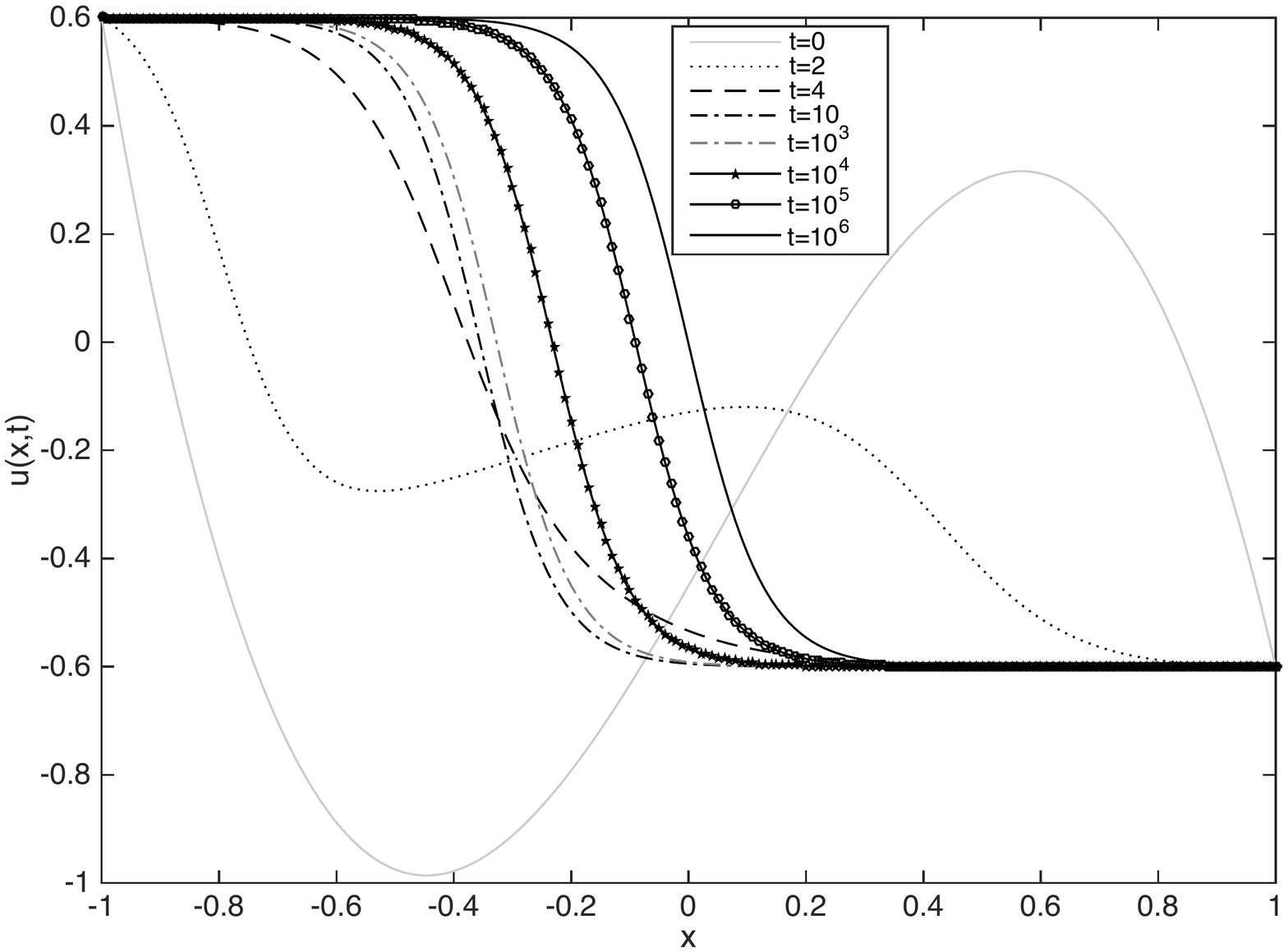}
 \caption{\small{The dynamics of the solution to the IBVP \eqref{eq:QnonmonoNUM}-\eqref{BIC}.
In the left picture we consider a discontinuous initial datum with $\e=0.06$ and $u_*=0.7$, in the right picture the non-monotone one 
$u_0(x)=0.6\left(-\frac{25}{6}x^3+\frac{3}{4}x^2+\frac{19}{6}x-\frac{3}4\right)$ with $\e=0.04$ and $u_*=0.6$.}}\label{fig:nonmonodiscon}
 \end{figure}

Furthermore, in Figure \ref{fig:nonmonodiscon}, the times needed to reach the equilibrium are smaller than the ones of Figure \ref{fig:eps005nonmono},
because we choose different $\e$ and $u_*$. 
Notice that in the left picture $e^{u_*/\e}\approx10^5$, while in the right one $e^{u_*/\e}\approx10^6$.

\subsection{The unbounded case}
We now consider the boundary value problem \eqref{eq:QunboundNUM}-\eqref{BIC}. 
In this case, we have to choose initial conditions $u_0$ such that $|u_0'|<\e^{-1}$; 
in particular, discontinuous initial data are here prohibited, as opposite to the previous case.

As before, in Figure \ref{fig:eps004unbound} we show how the size of the viscosity parameter $\e$ influences the speed rate of convergence; 
to one side, we have convergence for times of the order $10^6$ for $\e=0.04$ (left hand picture), 
while when $\e=0.02$, we have to wait till $t \sim 10^{12}$ for the solution to reach the equilibrium.

\begin{figure}[ht]
\centering
\includegraphics[width=7cm,height=5.5cm]{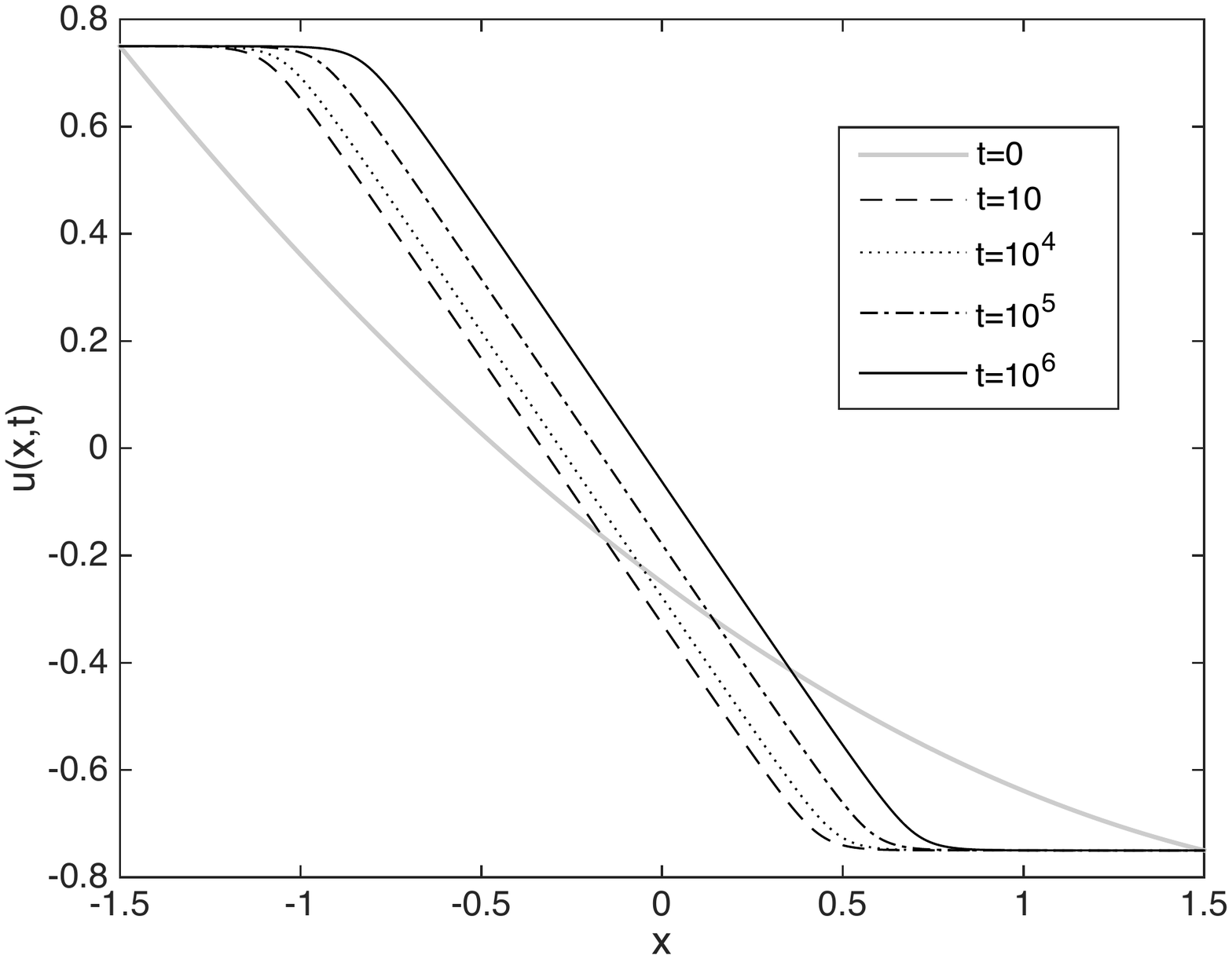}
 \hspace{3mm}
\includegraphics[width=7cm,height=5.5cm]{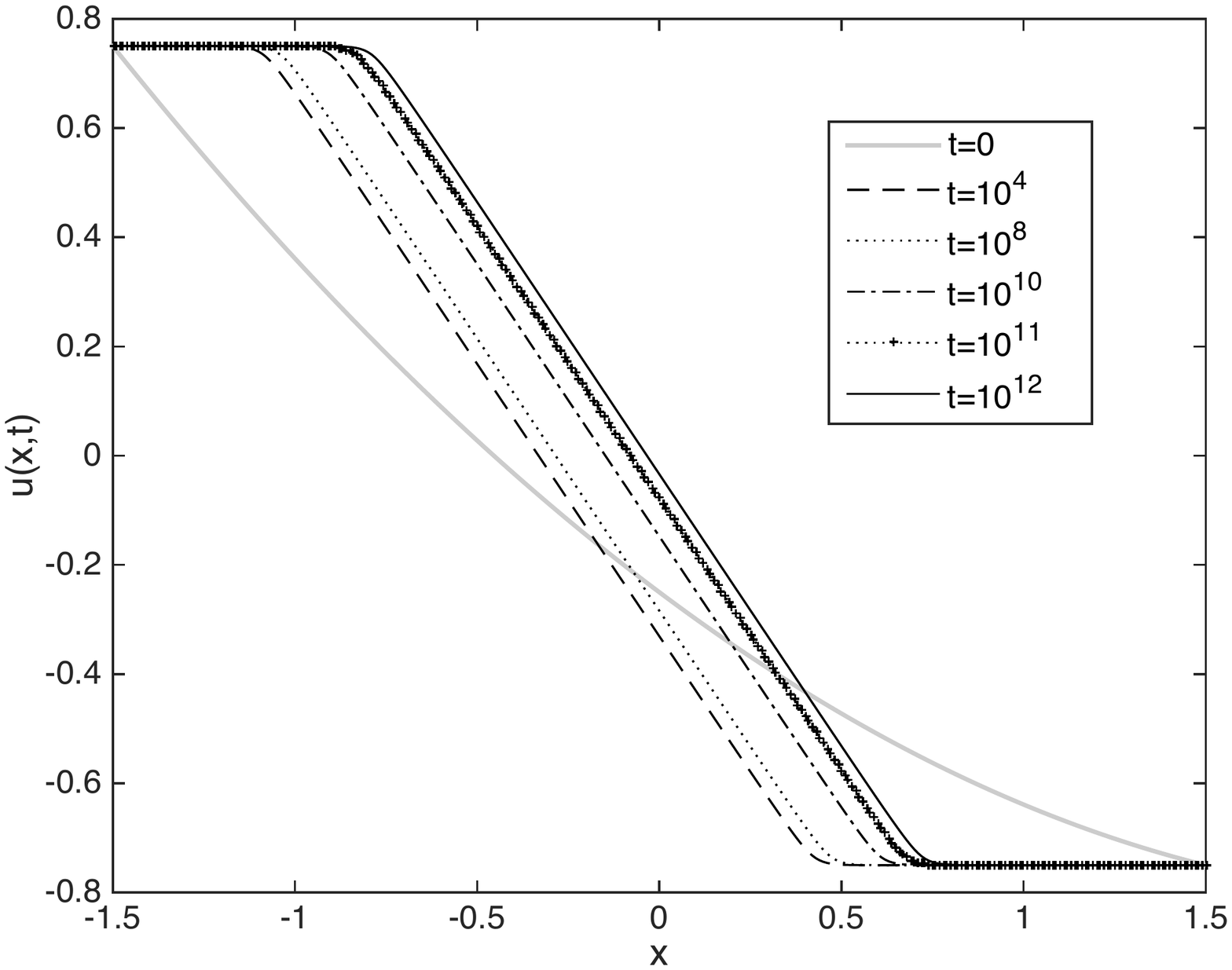}
 \hspace{3mm}
 \caption{\small{The dynamics of the solution to the IBVP \eqref{eq:QunboundNUM}-\eqref{BIC} with $u_* = 0.75$
 and initial datum $u_0(x)=\frac{1}{9}x^2-\frac12x-\frac{1}{4}$. In the left picture $\e=0.04$, in the right one $\e=0.02$.}}\label{fig:eps004unbound}
 \end{figure}

In Figure \ref{fig:eps007unbound}, we illustrate what happens when choosing nonmonotone initial data; 
the solution becomes monotone in short times, and, after an interface is formed, it (slowly) converges towards its asymptotic configuration.
  
\begin{figure}[ht]
\centering
\includegraphics[width=7cm,height=5.5cm]{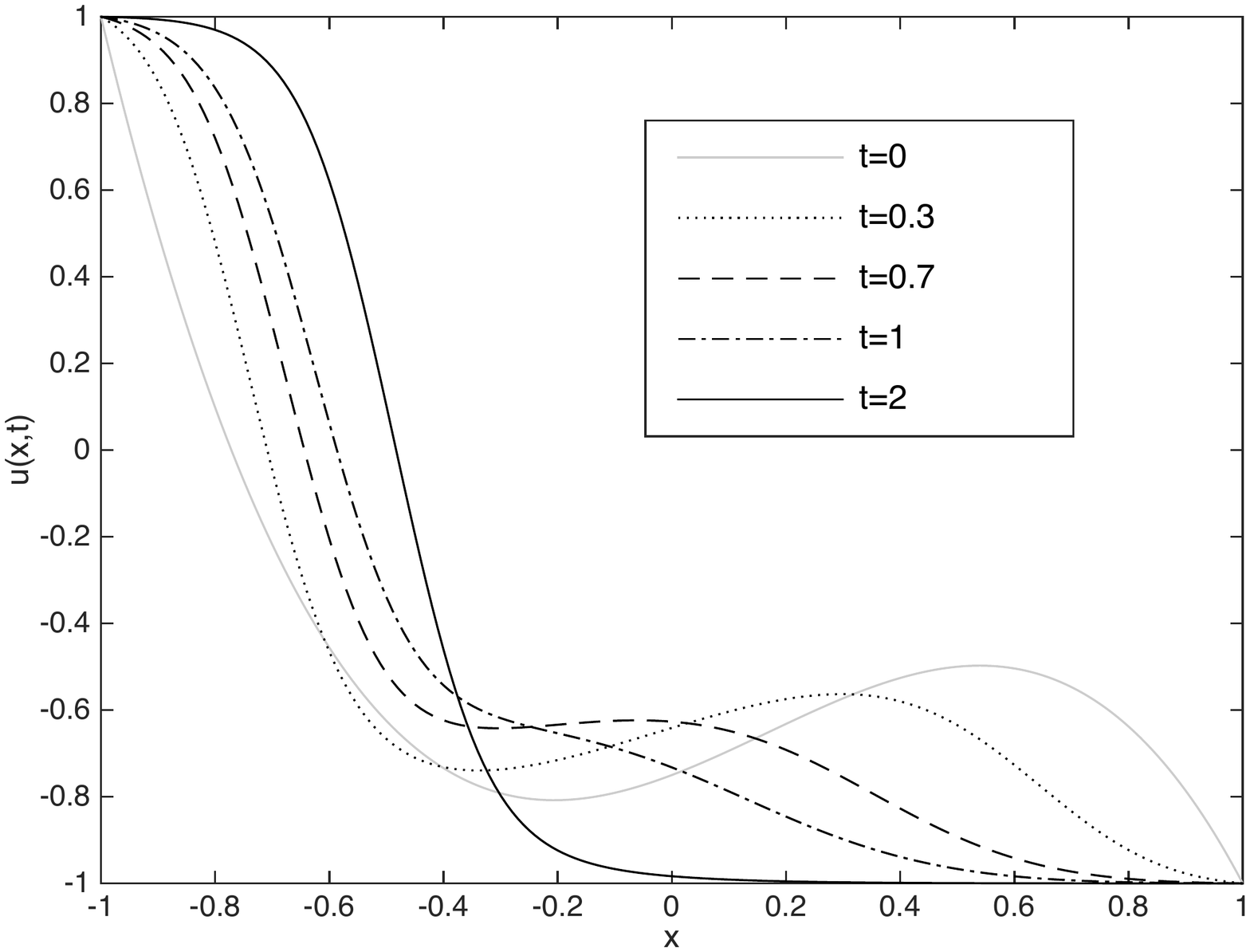}
 \hspace{3mm}
\includegraphics[width=7cm,height=5.5cm]{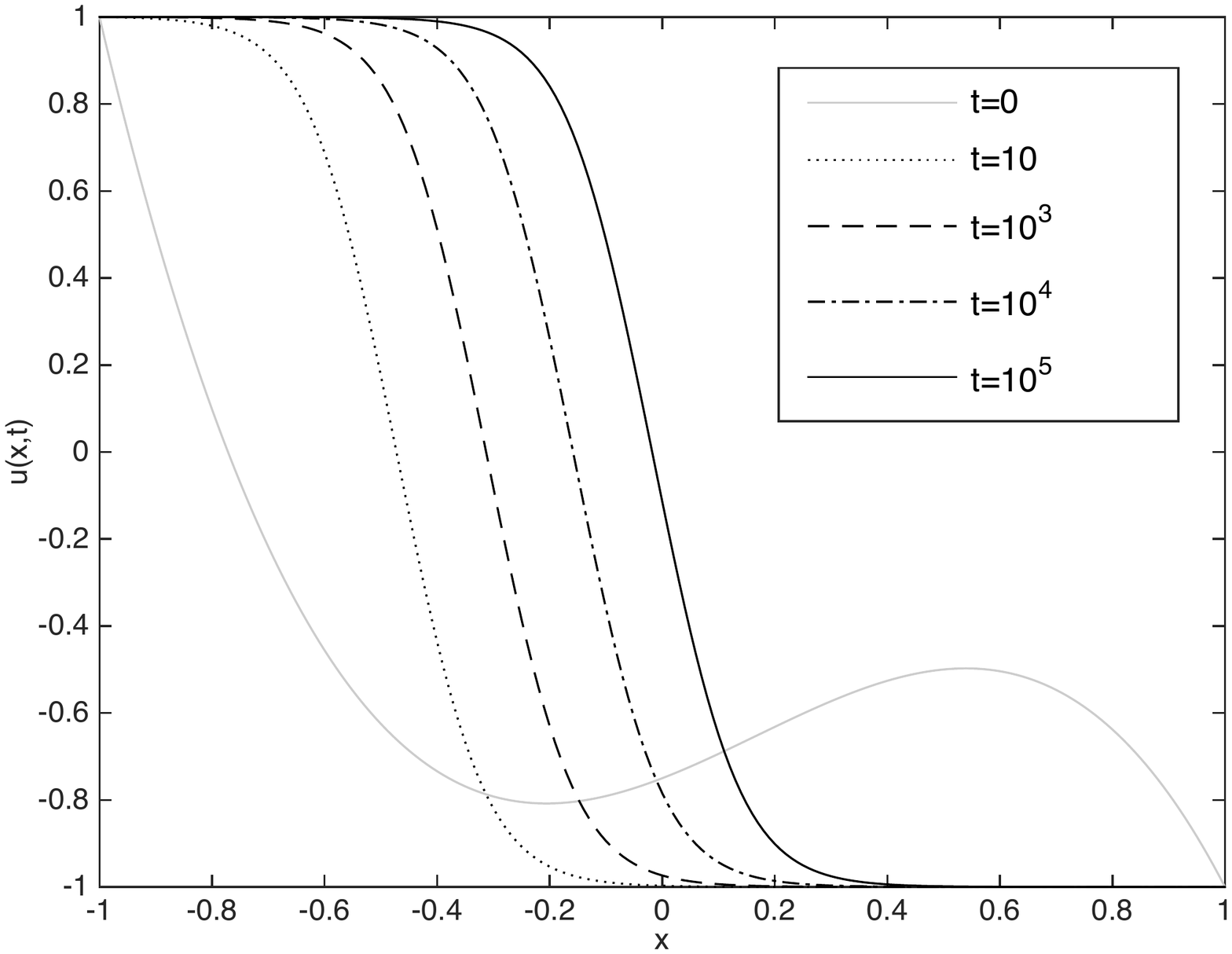}
 \hspace{3mm}
 \caption{\small{The dynamics of the solution to the IBVP \eqref{eq:QunboundNUM}-\eqref{BIC} with $\e=0.07$, $u_*=1$
 and initial datum $u_0(x)=-\frac32x^3+\frac34x^2+\frac12x-\frac34$.}}\label{fig:eps007unbound}
 \end{figure}
 
To conclude, in Figure \ref{fig:datieps} we show what happens if choosing the boundary data $u_*$ too small. 
If considering \eqref{eq:QnonmonoNUM} and choosing $u_*\leq\e$ (hence smaller than Figure \ref{fig:eps005nonmono}) 
we see that no metastable behavior occurs (left hand picture of Figure \ref{fig:datieps}).  
In this case the time taken for the solution to reach its asymptotic limit becomes much smaller, and we thus have a fast
convergence towards the equilibrium rather than a metastable behavior. 

This is also consistent with the estimate obtained for the first eigenvalue in \eqref{rough};
if the boundary data are taken too small (for example of the order $\e$), then $\lambda_1^\e = \mathcal O(\e^2)$, and no metastability is expected.
The same happens for the solution to \eqref{eq:QunboundNUM}; 
here no smallness condition on the boundary data is necessary for the existence of a steady state (see Proposition \ref{prop:ex-unbound}). 
However, if choosing $u_* =\e/2$, we have convergence towards the equilibrium for times of the order $t=10^3$ (right hand picture in Figure \ref{fig:datieps}).

\begin{figure}[ht]
\centering
\includegraphics[width=7cm,height=5.5cm]{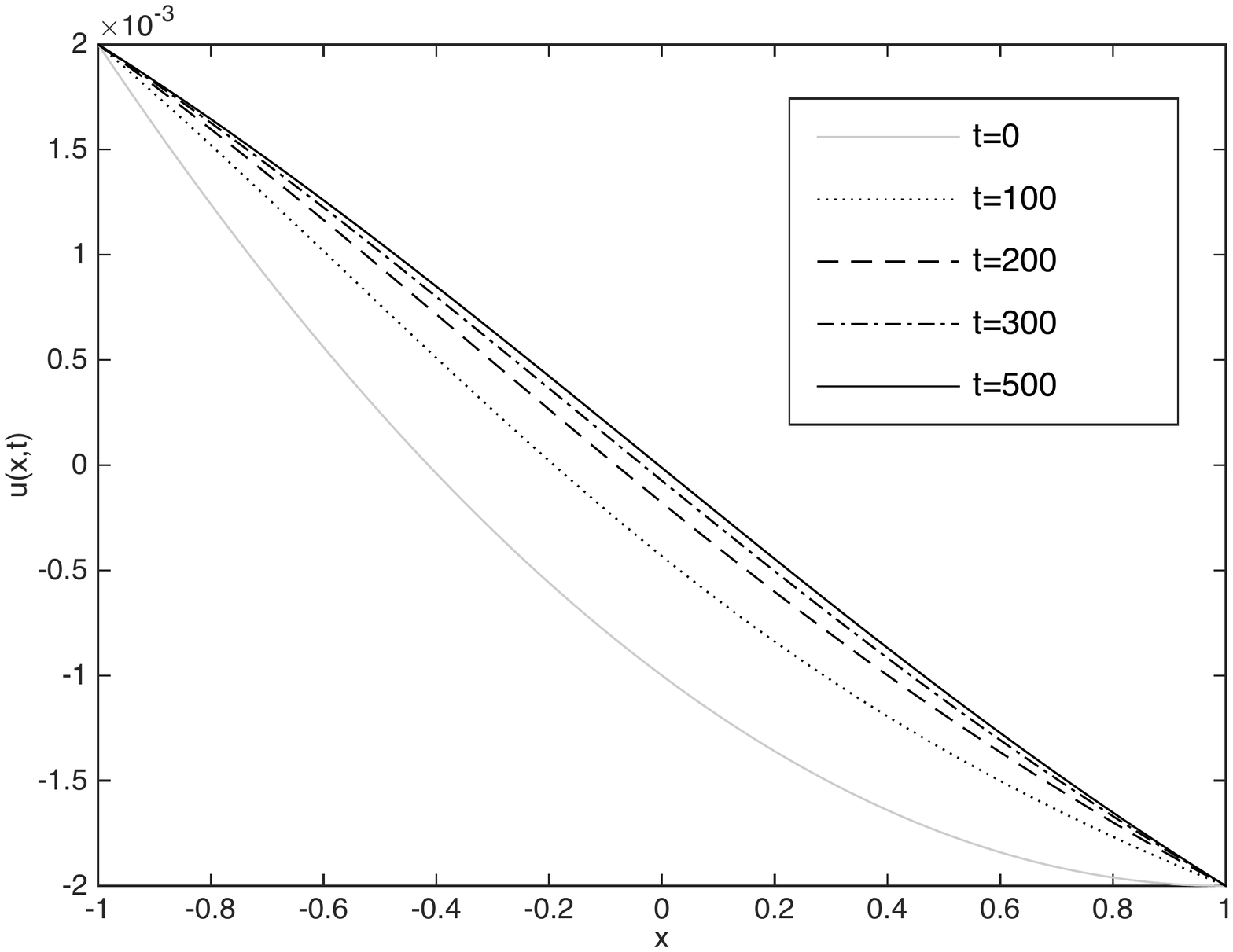}
\hspace{3mm}
\includegraphics[width=7cm,height=5.5cm]{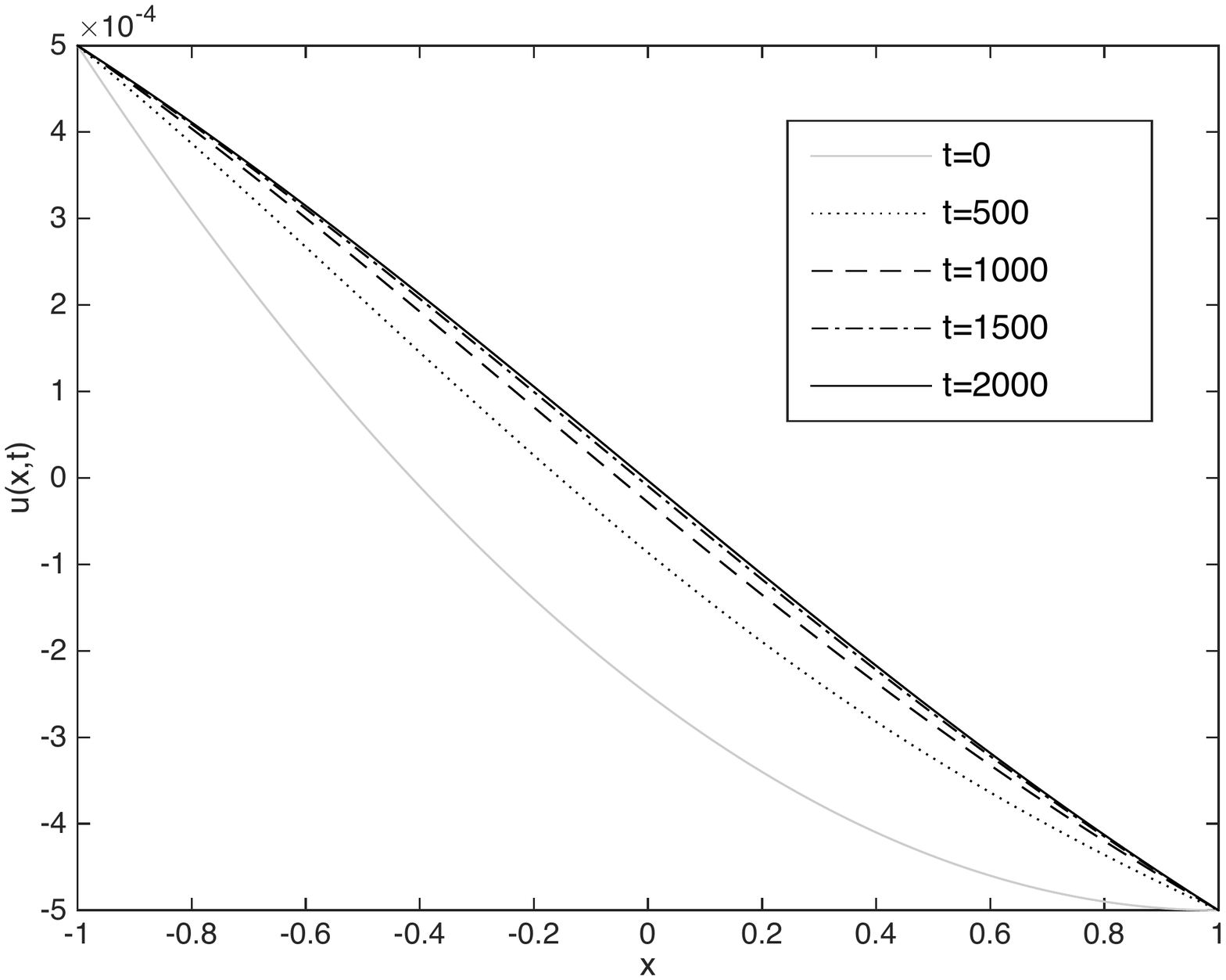}
\hspace{3mm}
\caption{\small{The dynamics of the solution to the IBVP \eqref{eq:QnonmonoNUM}-\eqref{BIC}  with $\e=0.004$ (left),
and \eqref{eq:QunboundNUM}-\eqref{BIC}  with $\e=0.001$ (right);
in both case we choose $u_*=\e/2$ and the initial datum $u_0(x)=\frac\e2\left(\frac12x^2-x-\frac12\right)$.}}
\label{fig:datieps}
\end{figure}

As we showed at the end of Section \ref{sec4}, the velocity of the interface is proportional to $\e$ if the initial data are sufficiently small ($u_*=\e$).
It follows that, if we consider the rescaled version \eqref{eq:rescale} for different values of the parameter $\e$, 
then the time taken for the solutions to reach the asymptotic limit is fixed (independent of $\e$).
This fact is confirmed by the plots in Figure \ref{fig:unboundrescale}, where we show two numerical solutions of \eqref{eq:rescale},
corresponding to two different values of the parameter $\e$, in the case of an unbounded flux $Q$ \eqref{eq:Qunbound}.
 
\begin{figure}[ht]
\centering
\includegraphics[width=7cm,height=5.5cm]{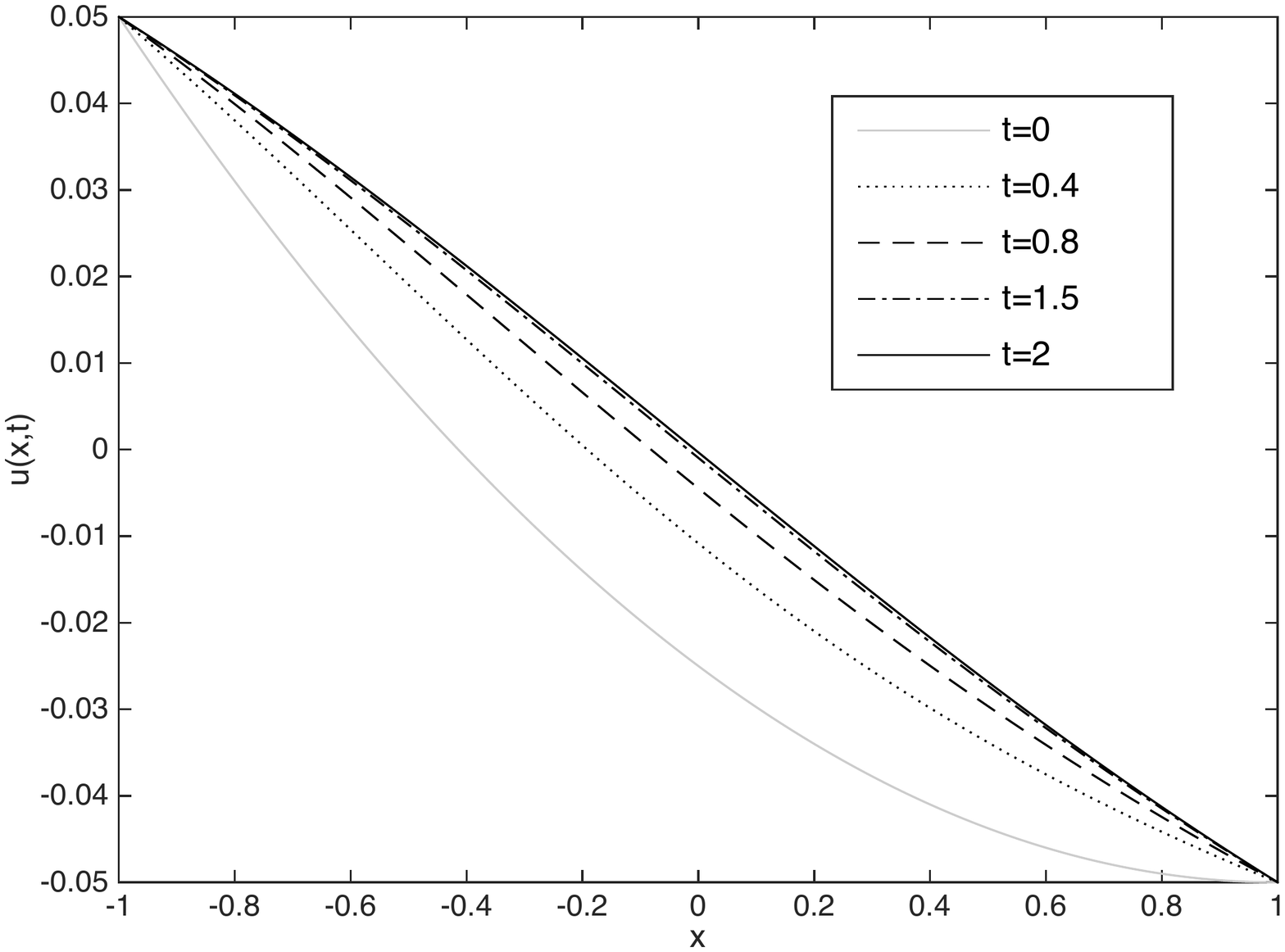}
\hspace{3mm}
\includegraphics[width=7cm,height=5.6cm]{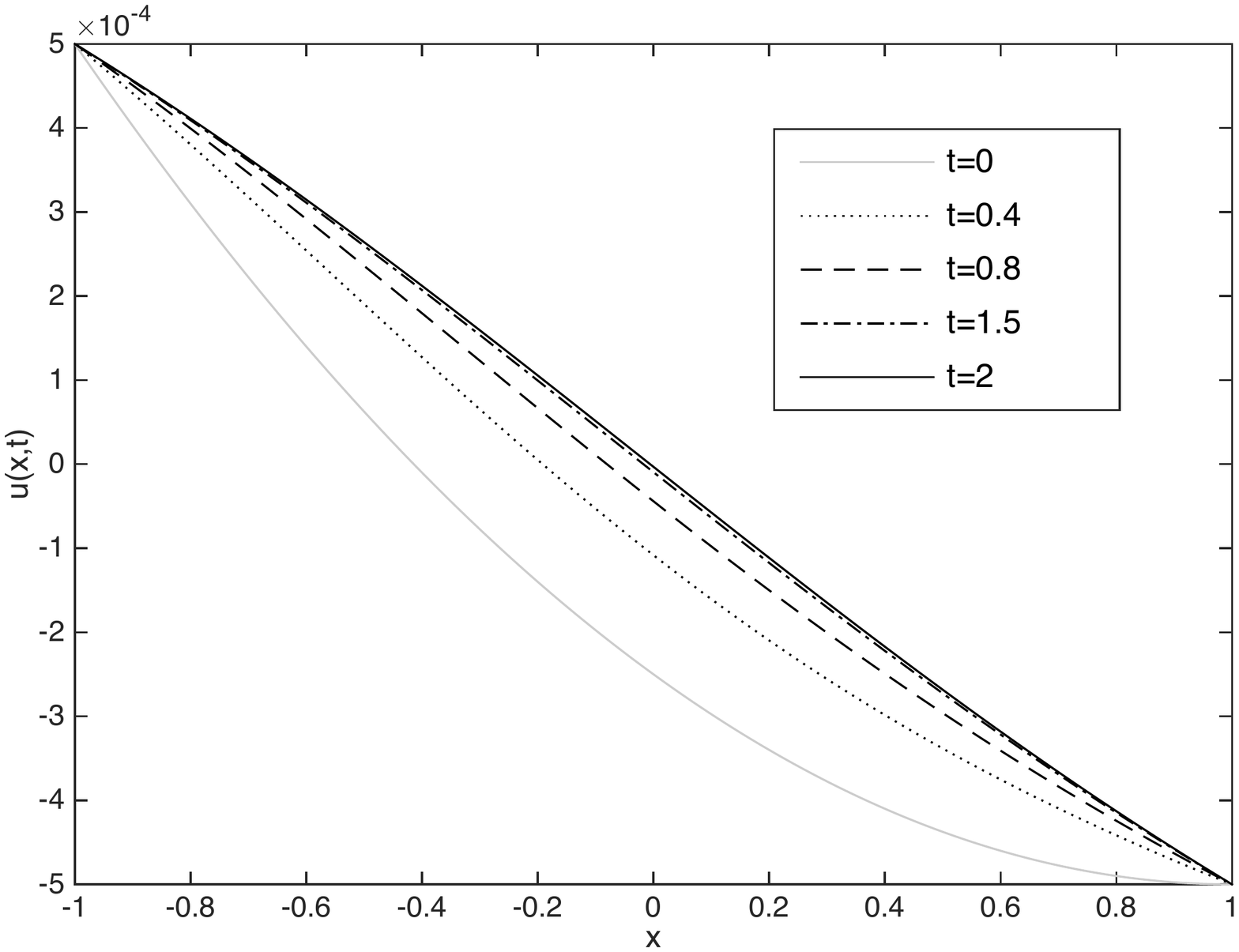}
\hspace{3mm}
\caption{\small{The dynamics of the solution to the IBVP \eqref{eq:rescale}-\eqref{BIC} with $Q(s)=\frac{s}{\sqrt{1-s^2}}$, 
$u_*=\e/2$ and initial datum $u_0(x)=\frac\e2\left(\frac12x^2-x-\frac12\right)$.
In the left hand picture we choose $\e=0.1$; in the right one $\e=0.001$.}}
\label{fig:unboundrescale}
\end{figure}

\end{document}